\definecolor{Chocolat}{rgb}{0.36, 0.2, 0.09}
\definecolor{BleuTresFonce}{rgb}{0.215, 0.215, 0.36}
\theoremstyle{plain}
\newtheorem{thm}{Theorem}[section]
\newtheorem{lemma}[thm]{Lemma}
\newtheorem{prop}[thm]{Proposition}
\newtheorem{cor}[thm]{Corollary}
\newtheorem*{thmintro}{Theorem}
\theoremstyle{definition}
\newtheorem{defi}[thm]{Definition}
\theoremstyle{remark}
\newtheorem*{examples}{Examples}
\newtheorem*{remark}{Remark}
\newtheorem*{remarks}{Remarks}
\DeclareMathOperator{\id}{id}
\newcommand{\bt}[3]{{t_{#3}(#1,#2)}}
\newcommand{\Cc}{\mathcal{C}}
\newcommand{\dd}{d}
\newcommand{\dCE}{{\dd_{\psi}}}
\newcommand{\free}{\mathcal{T}}
\newcommand{\cofree}{{\free^{\mathrm{c}}}}
\newcommand{\cofreen}[2]{{\cofree}(#2)^{(#1)}}
\newcommand{\homol}{{H_\bullet}}
\newcommand{\Oo}{\mathcal{O}}
\newcommand{\rhodist}{\rho}
\newcommand{\sS}{\mathbb{S}}
\newcommand{\antishriek}{{\scriptstyle \text{\rm !`}}}
\newcommand{\im}{\mathrm{Im}\ }
\newcommand{\susp}{s}
\newcommand{\Ge}{{\mathcal{G}}}
\newcommand{\Gravity}{\mathcal{H}yc^\antishriek}
\newcommand{\Grav}{\Gravity}
\newcommand{\NeTr}{\mathsf{Nested Tree}}
\newcommand{\NT}{\mathcal{NT}}
\newcommand{\I}{\mathrm{I}}
\newcommand{\cH}{\mathcal{H}}
\newcommand{\Tree}{\mathsf{Tree}}
\newcommand{\BVK}{\mathcal{BV}_\infty}
\newcommand{\Tw}{\mathrm{Tw}}
\newcommand{\la}{\langle}
\newcommand{\ra}{\rangle}
\newcommand{\qi}{\xrightarrow{\sim}}
\newcommand{\g}{\mathfrak{g}}
\newcommand{\BV}{\mathcal{B}\mathcal{V}}
\newcommand{\qBV}{\mathrm{q}\BV}
\newcommand{\qR}{\mathrm{q}R}
\newcommand{\qPo}{\mathrm{q}\Po}
\newcommand{\KK}{\mathbb{K}}
\newcommand{\End}{\mathrm{End}}
\newcommand{\B}{\mathrm{B}}
\newcommand{\oPo}{\overline{\Po}}
\newcommand{\G}{\mathcal{G}}
\newcommand{\N}{\mathcal{N}}
\newcommand{\NN}{\mathbb{N}}
\newcommand{\Ker}{\mathop{\rm Ker }}
\newcommand{\Sy}{\mathbb{S}}
\newcommand{\Po}{\mathcal{P}}
\newcommand{\F}{\mathcal{T}}
\newcommand{\ac}{\scriptstyle \text{\rm !`}}
\newcommand{\Hom}{\mathrm{Hom}}
\newcommand{\epi}{\twoheadrightarrow}
\newcommand{\mono}{\rightarrowtail}
\newcommand{\Y}{\vcenter{\xymatrix@M=0pt@R=6pt@C=6pt{
\ar@{-}[dr] &  &\ar@{-}[dl]  \\
 &\ar@{-}[d] &  \\  & &}}}
\newcommand{\YY}{\vcenter{\xymatrix@M=0pt@R=6pt@C=6pt{
\ar@{-}[dr] &  &\ar@2{-}[dl]  \\
 &\ar@2{-}[d] &  \\  & &}}}
\newcommand{\YYY}{\vcenter{\xymatrix@M=0pt@R=6pt@C=6pt{
\ar@{-}[dr] &  &\ar@3{-}[dl]  \\
 &\ar@3{-}[d] &  \\  & &}}}
\newcommand{\cop}{\vcenter{\xymatrix@M=0pt@R=6pt@C=6pt{
 & \ar@{-}[d] & \\
 &\ar@{-}[dr] \ar@{-}[dl] &  \\  & &}}}
\newcommand{\copL}{\xymatrix@M=0pt@R=6pt@C=6pt{
 & \ar@{-}[d] & \\
 &\ar@{-}[dr] \ar@{-}[dl] &  \\  & &\\  & &\\  & &}}
\newcommand{\YL}{\vcenter{\xymatrix@M=0pt@R=6pt@C=6pt{
\ar@{-}[dr] &  &\ar@{-}[dl]  \\
 &\ar@{-}[d] &   \\  & &\\  & &\\  & &}}}
\newcommand{\YYL}{\vcenter{\xymatrix@M=0pt@R=6pt@C=6pt{
\ar@{-}[dr] &  &\ar@2{-}[dl]  \\
 &\ar@2{-}[d] &  \\  & &\\  & &\\  & &}}}
\newcommand{\LYY}{\vcenter{\xymatrix@M=0pt@R=6pt@C=6pt{
\ar@{-}[dr] &  &\ar@2{-}[dl]  \\
 &\ar@2{-}[d] &  \\  & &\\  & &\\  & &}}}
\newcommand{\XX}{\vcenter{\xymatrix@M=0pt@R=6pt@C=6pt{\ar@{-}[ddrr]&&\ar@{-}[ddll] \\ && \\ &&   }}}
\newcommand{\Ta}{\vcenter{\xymatrix@M=0pt@R=6pt@C=6pt{ \ar@{-}[dddrrr] && \ar@{-}[dl] &&  \\
&&& \ar@{-}[dl]  &  \\ &&&&  \ar@{-}[dl]  \\&&&  \ar@{-}[d] &
\\&&&& }}}
\newcommand{\Tb}{\vcenter{\xymatrix@M=0pt@R=6pt@C=6pt{  & \ar@{-}[dr]&&\ar@{-}[dl] \\
\ar@{-}[dr]&&\ar@{-}[dl]& \\&\ar@{-}[dr]&&\ar@{-}[dl]
\\&&\ar@{-}[d]& \\&&& }}}
\newcommand{\Tc}{\vcenter{\xymatrix@M=0pt@R=6pt@C=6pt{   \ar@{-}[dr]&&\ar@{-}[dl]& \\
&\ar@{-}[dr]&& \ar@{-}[dl] \\\ar@{-}[dr]&&\ar@{-}[dl]&
\\&\ar@{-}[d]&& \\&&& }}}
\newcommand{\Td}{\vcenter{\xymatrix@M=0pt@R=6pt@C=6pt{ && \ar@{-}[dr]&&\ar@{-}[dddlll] \\
 &\ar@{-}[dr]&&& \\ \ar@{-}[dr]&&&& \\& \ar@{-}[d]&&& \\&&&&  }}}
\newcommand{\Te}{\vcenter{\xymatrix@R=3pt@C=3pt{\ar@{-}[drdr] &&\ar@{-}[dl]  *=0{}
\ar@{-}[dr]&& \ar@{-}[ddll] \\ &&& *=0{}& \\&& *=0{} \ar@{-}[d]&&
\\&&&& }}}
\newcommand{\TaC}{\vcenter{\xymatrix@M=0pt@R=6pt@C=6pt{ \ar@{-}[ddddddrrrrrr] && \ar@{-}[dl] && && \\
&&& \ar@{-}[dl]  &&&  \\ &&&&  \ar@{-}[dl]&&  \\&&& &&&
\\&&&&\ar@{-}[dl]&& \\&&&&&\ar@{-}[dl]&\\&&&&&& }}}
\newcommand{\TreeL}{\vcenter{\xymatrix@M=0pt@R=5pt@C=5pt{ \ar@{-}[dr] &
&\ar@{-}[dl] & &  \\
& \ar@{-}[dr] & &\ar@{-}[dl]  & \\
& &\ar@{-}[d] & & \\
& & \\ & & }}}
\newcommand{\TreeR}{\vcenter{\xymatrix@M=0pt@R=5pt@C=5pt{
 & &\ar@{-}[dr] & & \ar@{-}[dl]  \\
& \ar@{-}[dr] & &\ar@{-}[dl]  & \\
& &\ar@{-}[d] & & \\
& & \\ & & }}}
\newcommand{\draftnote}[1]{}
\title{The minimal model for the Batalin-Vilkovisky operad}
\author{Gabriel C. Drummond-Cole}
\address{Northwestern University\\
 Mathematics Department\\ 2033 Sheridan Rd. Evanston\\
  IL 60208-2730\\
USA}
\email{gabriel@math.northwestern.edu}
\author{Bruno Vallette}
\address{Max-Planck-Institut f\"ur Mathematik \\
Vivatsgasse 7\\
53111 Bonn \\
Germany }
\email{brunov@unice.fr}
\begin{document}

\maketitle

\begin{abstract}
The purpose of this paper is to explain and to generalize, in a homotopical way, the result of Barannikov-Kontsevich and Manin which states that 
 the underlying homology groups of some Ba\-ta\-lin-Vil\-ko\-vi\-sky algebras carry  a  Frobenius manifold structure. 
To this extent, we first make the minimal model for the operad encoding BV-algebras explicit. Then we prove a homotopy transfer theorem for the associated notion of homotopy  BV-algebra. The final result provides an extension of  the action 
of the homology of the Deligne-Mumford-Knudsen moduli space of genus $0$ curves on the homology of some BV-algebras to an action via higher homotopical operations organized by the cohomology of the open moduli space of genus zero curves. 
Applications in Poisson geometry and Lie algebra cohomology and to the Mirror Symmetry conjecture are given.
\end{abstract}

\tableofcontents

\section*{Introduction}
The notion of a \emph{Batalin-Vilkovisky algebra}, or \emph{BV-algebra} for short, is made up of a commutative product, a Lie bracket and a unary operator, which satisfy some relations. 
This notion now appears in many fields of mathematics like 
\begin{itemize}
\item[$\diamond$] \textsc{Algebra:} Vertex (operator) algebras \cite{Borcherds86, LianZuckerman93}, Chevalley-Eilenberg cohomology of Lie algebras \cite{Koszul85}, bar construction of $A_\infty$-algebras \cite{TTW10}, 

\item[$\diamond$] \textsc{Algebraic geometry:} Gromov-Witten invariants and moduli spaces of curves (quantum cohomology, Frobenius manifolds) \cite{BarannikovKontsevich98, Manin99, LosevShadrin07}, 
chiral algebras (geometric Langlands program) \cite{BeilinsonDrinfeld04, FrenkelBenZvi04}, 

\item[$\diamond$] \textsc{Differential geometry:}  the sheaf of polyvector fields of an orientable (resp. Poisson or Calabi-Yau) manifold \cite{Koszul85, Ran97, Kontsevich03}, the differential forms of a manifold (Hodge decomposition in the Riemannian case) \cite{BarannikovKontsevich98, TamarkinTsygan00, Sullivan10}, Lie algebroids \cite{YKS95, Xu99, Roger09}, Lagrangian (resp. coisotropic) intersections \cite{BehrendFantechi09, BaranovkyGinzburg10}, 
 
\item[$\diamond$] \textsc{Noncommutative geometry:} the Hoschchild cohomology of a symmetric algebra \cite{TamarkinTsygan00, Tradler08, Ginzburg06, Menichi09} and the cyclic Deligne conjecture \cite{Kaufmann04, TradlerZeinalian06, Costello07, KontsevichSoibelman09, BataninBerger09}, non-commutative differential operators \cite{GinzburgSchedler10},

\item[$\diamond$] \textsc{Topology:} 2-fold loop spaces on topological spaces carrying an action of the circle \cite{Getzler94}, topological conformal field theories, Riemann surfaces \cite{Getzler94},  string topology \cite{ChasSullivan99},

\item[$\diamond$] \textsc{Mathematical physics:} BV quantization (gauge theory) \cite{BatalinVilkovisky81, Witten90, Schwarz93, Roger09}, BRST cohomology \cite{LianZuckerman93, Stasheff98}, string theory \cite{Witten92, WittenZwiebach92, Zwiebach93, PenkavaSchwarz94}, topological field theory \cite{Getzler94}, Renormalization theory \cite{CostelloGwilliam11}.   
\end{itemize}
Nearly all the examples of BV-algebras appearing in the aforementioned fields 
actually have some homology groups as underlying spaces. Therefore they are some shadow of a higher structure: 
that of a \emph{homotopy} BV-algebra.\\

Algebra and homotopy theories do not mix well together a priori. 
The study of the homotopy properties of algebraic structures often introduces infinitely many new higher operations of higher arity.
So one uses the operadic calculus to encode them.

The study of the homotopy properties of algebraic structures often
introduces infinitely many new higher operations of higher arity.

\smallskip

This is the case for Batalin-Vilkovisky algebras, which do not have homotopy invariance properties, like the transfer of structure under homotopy equivalences, see \cite[Section~$10.3$]{LodayVallette10}. To solve this, we have defined, in \cite{GCTV09}, a notion of \emph{homotopy Batalin-Vilkovisky algebra} with the required homotopy properties. To do so, we have constructed a quasi-free, thus cofibrant, resolution of the operad $\BV$ encoding Batalin-Vilkovisky algebras, using the inhomogeneous Koszul duality theory. 

\smallskip

While quite ``small'', this resolution carries a non-trivial internal differential; so it is not minimal in the sense of D. Sullivan \cite{Sullivan77}. The purpose of the present paper is to go even further and to produce the minimal model of the operad $\BV$, that is, a resolution as a  quasi-free operad with a decomposable differential and a certain grading on the space of generators. 

\smallskip

The first main result of this paper is the following computation of the homology groups of the bar construction for the operad $\BV$ as a deformation retract. 
\begin{thmintro}[\ref{thm:MainDefRetract}]
The various maps defined in Section~$2$ form the following deformation retract in the category of differential graded $\Sy$-modules
\begin{eqnarray*}
\xymatrix@C=13pt{     *{\qquad\qquad\qquad\quad\qquad
\BV^{\ac}:=({\qBV}^\antishriek, d_\varphi)  \  \ } \ar@<-1ex>@(dl,ul)[]\ \ar@<0.5ex>[r] & *{\ 
(H_\bullet(\B \, \BV)\cong \overline{T}^c(\delta) \oplus {\mathcal S}^{-1} Grav^*, 0). \quad \ \  \ \quad }  \ar@<0.5ex>[l] &  & \qquad }
\end{eqnarray*}
where $\delta$ denotes a unary operator of degree $2$, where ${\mathcal S}^{-1}$ is the operadic desuspension and where $Grav$ is the operad Gravity isomorphic to the homology $H_\bullet(\mathcal{M}_{0, n+1})$ of the moduli space of genus $0$ curves with marked points.
\end{thmintro}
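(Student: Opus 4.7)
The task has two aspects: identifying the right-hand complex as the homology, and producing explicit contraction data. The plan is to exhibit by hand an inclusion $i$, a projection $p$, and a homotopy $h$ on $\qBV^{\antishriek}$, then verify the side conditions of a deformation retract.

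The left-hand complex $(\qBV^{\antishriek}, d_\varphi)$ has a concrete description coming from the inhomogeneous Koszul duality of \cite{GCTV09}: as an $\Sy$-module, $\qBV^{\antishriek}$ is described by trees whose internal vertices are labelled by generators dual to $\delta$, the shifted Lie bracket, and the commutative product, and $d_\varphi$ is the perturbation induced by the inhomogeneous BV relation. On the arity-one component, $d_\varphi$ vanishes and the only possible trees are the iterated $\delta$-towers, so this piece trivially retracts onto $\overline{T}^c(\delta)$; this accounts for the first summand of the target.

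The higher-arity part is the heart of the theorem. First I would filter $\qBV^{\antishriek}$ by the total number of unary $\delta$ generators appearing in a tree. The associated graded differential is a Gerstenhaber-cobar-type differential, whose homology is computable by classical Koszul duality. Perturbing by $d_\varphi$ then uses the inhomogeneous BV relation to convert this computation into the one for the full BV cooperad, whose cohomology is the operadic desuspension of the dual of Getzler's Gravity operad via his identification $\mathrm{Grav}^\antishriek \cong \mathcal{H}yc$ together with $\mathcal{H}yc \cong H_\bullet(\overline{\mathcal{M}}_{0,n+1})$. Concretely, gravity generators lift to canonical cycle representatives built from iterated brackets in $\qBV^{\antishriek}$, giving the inclusion $i$; the projection $p$ reads off the leading bracket part modulo boundaries and maps to the gravity quotient.

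For the homotopy $h$, I would define it tree by tree, peeling a single $\delta$ off the root tower and using the inhomogeneous BV relation as a contracting step, by analogy with the standard Koszul contracting homotopy for a polynomial algebra extended by a derivation. The main obstacle will be the combinatorial and sign-theoretic verification of all four side conditions of a deformation retract simultaneously, in particular $d_\varphi h + h d_\varphi = \id - ip$, the idempotency $h^2 = 0$, and the annihilations $h i = 0$ and $p h = 0$. The interplay between the symmetry of the product, the antisymmetry of the Lie bracket, and the second-order nature of $\delta$ makes these cancellations delicate, and correctly tracking the signs of the operadic suspension against the gravity/hypercommutative duality is the principal technical challenge.
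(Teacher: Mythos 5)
Your proposal correctly predicts the shape of the answer and the first structural step (splitting off the $\delta$-towers via the isomorphism $\qBV^\antishriek\cong T^c(\delta)\otimes\Ge^\antishriek$ with $d_\varphi\cong\delta^{-1}\otimes d_\psi$), but it has genuine gaps exactly where the work happens. The central missing idea is the contracting homotopy. The paper's homotopy is $\delta\otimes H$, where $H$ replaces the bracket generator $\beta$ by the product generator $\mu$ at each vertex of a binary tree, weighted by Loday's associahedron coefficients $\omega(v)/\binom{n+1}{2}$, with $\omega(v)$ the product of the numbers of leaves above the two inputs of $v$. These weights are forced twice over: they make $d_\psi H+Hd_\psi=\id$ on the coaugmentation coideal because the weights at the vertices of a tree sum to $1$ (Lemma~\ref{lemma:Hcontracts}), and, more delicately, they make $H$ preserve the sub-$\Sy$-module $\Ge^\antishriek\subset\cofree(M)$ (Lemma~\ref{lemma:HforGe}, a check on the twelve-dimensional space of decorated two-vertex trees). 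An unweighted ``replace one generator somewhere'' homotopy does \emph{not} restrict to $\Ge^\antishriek$, so this is not a matter of chasing signs. Moreover your sketch points the homotopy the wrong way: since $d_\varphi$ \emph{lowers} the $\delta$-power by one while splitting a bracket, a degree $+1$ homotopy must \emph{multiply} by $\delta$ and convert a bracket into a product, not ``peel a $\delta$ off the root tower.''

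Two of your supporting steps also fail as stated. Filtering $\qBV^\antishriek$ by the number of $\delta$'s makes the whole of $d_\varphi$ lower the filtration, so the associated graded differential is zero rather than ``a Gerstenhaber-cobar-type differential''; the paper instead writes the complex as $\cdots\to\delta^m\otimes\Ge^\antishriek\to\delta^{m-1}\otimes\Ge^\antishriek\to\cdots\to\Ge^\antishriek\to 0$ and uses the contractibility of $(\Ge^\antishriek,d_\psi)$ to reduce everything to $\delta^m\otimes\I$ in positive $\delta$-degree and to the cokernel $\Ge^\antishriek/\im d_\psi$ at the end. And that cokernel is identified with ${\mathcal S}^{-1}Grav^*$ not through the Koszul duality $Grav^{!}\cong HyperCom\cong H_\bullet(\overline{\mathcal{M}}_{0,n+1})$ that you invoke, but through the dual of Getzler's theorem that ${\mathcal S}^{-1}Grav\cong\Ker\Delta$ for the derivation $\Delta$ on $\Ge$ sending $\bullet$ to the bracket (Proposition~\ref{prop:getzler}). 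Finally, the statement only asks for a deformation retract ($pi=\id$ and $d_\varphi h+hd_\varphi=\id-ip$); the side conditions $h^2=hi=ph=0$ you propose to verify are not claimed and need not hold.
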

This result provides the space of generators for the minimal model of the operad $\BV$. 
But, on the opposite to the Koszul duality theory, this graded $\Sy$-module is not endowed with a cooperad structure but with a \emph{homotopy cooperad} structure. This means that there are higher decomposition maps which split elements, not only into $2$ but also into $3$, $4$, etc. Finally, the differential of the minimal model is made up of these decomposition maps. 

\smallskip

Let us recall that the problem of making minimal models explicit in algebraic topology 
is related to  the following notions. Sullivan models \cite{Sullivan77} are dg commutative algebras generated by the (dual of the) rational homotopy groups $\pi_\bullet X \otimes \mathbb{Q}$ of a topological space $X$, where the differential is given by the Whitehead products. 
Quillen models \cite{Quillen69} are dg Lie algebras generated by the (dual of the) rational homology groups $H^\bullet(X, \mathbb{Q})$ of a topological space $X$, where the differential is given by the  Massey products. The Steenrod algebra is an inhomogenous Koszul algebra, whose Koszul dual dg algebra is the $\Lambda$ algebra, see \cite{Priddy70}. The minimal model of the Steenrod algebra is generated by the underlying homology groups of the $\Lambda$ algebra and the differential is related to  the Adams
spectral sequence \cite{BCKQRS66, Wang67}. Recall that the $\Lambda$ algebra is the first page of the Adams spectral sequence, which computes the homotopy groups of spheres. 

\medskip

\noindent
\begin{tabular}{|c|c|c|c|c|}
\hline
& \textsc{Sullivan} & \textsc {Quillen} & \textsc {Steenrod} & \textsc {Batalin-Vilkovisky} \\
\hline 
 \textit{free} & \begin{minipage}[c]{2.7cm}\center  commutative algebra  $S(-)$ \end{minipage}  
 & \begin{minipage}[c]{2.7cm} \center Lie algebra $\mathcal L ie (-)$ \end{minipage}  & 
\begin{minipage}[c]{2.7cm} \center associative algebra $T(-)$ \end{minipage}  & 
\begin{minipage}[c]{2.7cm} \center  operad $\free ( - )$ \end{minipage}  \\
\hline
\textit{generators} & $\pi_\bullet X \otimes \mathbb{Q}$ & $H^\bullet(X, \mathbb{Q})$ & $\Lambda$ & 
$H^\bullet(\mathcal{M}_{0, n+1}) \oplus \bar T^c(\delta)$\\
\hline 
\textit{differential} & Whitehead brackets & Massey products & \begin{minipage}[c]{2.7cm} \center  differentials of  Adams spectral sequence \end{minipage} & homotopy cooperad \\
\hline
\end{tabular}

\medskip 
The operad $\BV$ behaves exactly in the same way as the Steenrod algebra with respect to the inhomogeneous Koszul duality theory, see \cite{GCTV09}. But, in contrast to the Steenrod algebra, we are able to compute, in this paper, the underlying homology groups of its Koszul dual (co)operad together with its algebraic structure. We also provide a topological explanation for our result. It was shown by E. Getzler \cite{Getzler94} that the operad $\BV$ is the homology of the framed little discs operad. Its minimal model is generated by a homotopy cooperad extension of the cooperad $H^\bullet(\mathcal{M}_{0, n+1})$  by a free resolution $T^c(\delta)$ of the circle $S^1$. 

\smallskip

We call the algebras over the minimal model of the operad $\BV$  \emph{skeletal homotopy Batalin-Vilkovisky algebras}, since they involve fewer generating operations than the notion of a homotopy BV-algebra given in \cite{GCTV09}. We provide new formulae for the homotopy transfer theorem for algebras over a quasi-free operad on a homotopy cooperad. To prove them, we have to introduce a new operadic method, based on a refined bar-cobar adjunction, since the classical methods of \cite[$10.2$]{LodayVallette10} (classical bar-cobar adjunction), and of A. Berglund \cite{Berglund09} (homological perturbation lemma) 
failed to apply. This gives the homotopy transfer theorem for skeletal homotopy BV-algebras.

\smallskip

The $d\Delta$-condition, also called the $d\bar d$-lemma or $dd^c$-lemma in \cite{DGMS75}, is a particular condition, coming from K\"ahler geometry, between the two unary operators:  the underlying differential $d$ and the BV-operator $\Delta$. Under this condition, S. Barannikov and M. Kontsevich  \cite{BarannikovKontsevich98}, and Y.I. Manin \cite{Manin99} proved that the underlying homology groups of a dg BV-algebra carry a Frobenius manifold structure. Such a structure is encoded by the homology operad $H_\bullet(\overline{\mathcal{M}}_{0, n+1})$ of the Deligne-Mumford-Knudsen moduli space of stable genus $0$ curves. Its is also called an hypercommutative algebra (with a 
 a compatible non-degenerate pairing). Tree formulae  for such a structure have been given by A. Losev and S. Shadrin in \cite{LosevShadrin07}. 
 
 We show that these results are actually a consequence of the aforementioned homotopy transfer theorem. This allows us to prove them under a weaker and optimal condition, called the non-commutative Hodge-to-de Rham condition. We recover the Losev-Shadrin formulae and thereby explain their particular form. Moreover our approach gives higher non-trivial operations, which are necessary to recover the homotopy type of the original dg BV-algebra. 

\begin{thmintro}[\ref{thm:HomotopyFrob}]
Let $(A, d,  \bullet, \Delta, \langle\,  , \rangle)$ be a dg BV-algebra with non-commutative Hodge-to-de Rham degeneration data. 

The underlying homology groups $H(A,d)$ carry a homotopy hypercommutative algebra structure, which extends the hypercommutative algebras of M. Kontsevich and S. Barannikov \cite{BarannikovKontsevich98}, Y.I. Manin \cite{Manin99},  A. Losev and S. Shadrin \cite{LosevShadrin07}, and J.-S. Park \cite{Park07}, and such that 
the rectified  dg BV-algebra $\mathrm{Rec}(H(A))$
is homotopy equivalent to $A$ in the category of dg BV-algebras. 
\end{thmintro}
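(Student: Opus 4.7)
The plan is to establish the statement by combining the homotopy transfer theorem for skeletal homotopy BV-algebras proved earlier in this paper with the algebraic reformulation of the non-commutative Hodge-to-de Rham condition. First, I would view the dg BV-algebra $(A,d,\bullet,\Delta,\langle\,,\rangle)$ as an algebra over the minimal model of $\BV$ via the canonical surjection from the minimal model onto $\BV$. Fixing a contraction of $(A,d)$ onto its homology $H(A,d)$ gives the homotopy data needed to apply the transfer formulae developed in this paper, which are tailored for algebras over a quasi-free operad on a homotopy cooperad. The output is a skeletal homotopy BV-algebra structure on $H(A,d)$.

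The crucial step is to exploit the Hodge-to-de Rham degeneration condition in order to show that the transferred operations indexed by the unary tower $\bar T^c(\delta)$ inside the generators $H^\bullet(\mathcal{M}_{0,n+1}) \oplus \bar T^c(\delta)$ of the minimal model can be made to vanish. Following the reformulation of Barannikov--Kontsevich, this condition is equivalent to the existence of a family of coherent trivializing primitives for all iterates of $\Delta$ that are compatible with $d$ on a chosen complement of the harmonic subspace; feeding such primitives into the contraction data annihilates every tree in the transfer formula whose vertices carry a non-trivial power of $\delta$. What survives is precisely the part indexed by the Gravity cooperad $H^\bullet(\mathcal{M}_{0,n+1})$, that is, a $\hyc_\infty$-algebra structure: a homotopy hypercommutative algebra structure on $H(A,d)$.

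To check that this structure extends the Barannikov--Kontsevich--Manin--Losev--Shadrin--Park hypercommutative algebra, I would compare the lowest-weight part of the transferred operations, governed by the fundamental class $[\overline{\mathcal{M}}_{0,n+1}]$, with the explicit tree formulae of Losev--Shadrin: both sum over stable trees with vertices decorated by the product $\bullet$ and edges by the chosen $\Delta$-primitives, and the signs agree via the standard operadic (de)suspension in the Koszul dual picture. The rectification statement then follows from abstract homotopical algebra over a cofibrant operad: the homotopy transfer theorem produces an $\infty$-quasi-isomorphism between $(A,d)$ and $H(A,d)$ as algebras over the minimal model of $\BV$, and the rectification functor sends $\infty$-quasi-isomorphisms to zig-zags of quasi-isomorphisms of dg BV-algebras, yielding the homotopy equivalence $\mathrm{Rec}(H(A)) \simeq A$ in the homotopy category of dg BV-algebras.

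The hardest part will be the second step: deducing from the bare Hodge-to-de Rham degeneration hypothesis a coherent choice of primitives that \emph{simultaneously} trivializes all the $\delta^k$-transferred operations and is compatible with the higher decomposition maps of the homotopy cooperad of generators. This requires a careful induction on the weight grading, paralleling but refining the construction in \cite{BarannikovKontsevich98}, to check that the only obstructions to such a coherent lift are precisely those already controlled by degeneration of the spectral sequence, so that none of the higher operations along $\bar T^c(\delta)$ picks up a non-trivial class on $H(A,d)$.
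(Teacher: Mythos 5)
Your overall architecture coincides with the paper's: apply the homotopy transfer Theorem~\ref{thm:HTT} to a contraction of $A$ onto $H(A,d)$, use the degeneration hypothesis to kill the unary tower, identify the first stratum with the Losev--Shadrin tree formulae, and deduce the rectification statement from Theorem~\ref{prop:HoEquivTransf}. The rectification and comparison steps are fine as you describe them (the paper additionally unwinds the Barannikov--Kontsevich construction via a Maurer--Cartan element in $\Hom(\bar S^c(H),\Ker\Delta)$ to match the versions of Manin and Park, but that is bookkeeping on top of the same idea).

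The genuine problem lies in what you single out as the hardest step. First, there is nothing to deduce there: ``non-commutative Hodge-to-de Rham degeneration data'' is \emph{defined} in the paper as a deformation retract $(i,p,h)$ satisfying $p(\Delta h)^{m-1}\Delta i=0$ for all $m\ge 1$, and by Proposition~\ref{prop:Shapeofd} the generator $\delta^m$ only decomposes into linear trees of $\delta$'s, so its transferred operation is literally $\Delta^m=p(\Delta h)^{m-1}\Delta i$. The vanishing of the unary tower is therefore the hypothesis verbatim (Proposition~\ref{prop:NCHdRHyperCom}), and Proposition~\ref{prop:redBValg-Hypercom} converts this vanishing into a homotopy hypercommutative structure; no induction on the weight grading and no ``coherent system of primitives'' is needed. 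Second, your stronger claim --- that the condition ``annihilates every tree in the transfer formula whose vertices carry a non-trivial power of $\delta$'' --- is false as stated and would destroy the theorem: the operations $\widetilde{\mu}$ for $\mu\in\overline{Grav}^*$ are sums of trees in which $\Delta$ occurs composed with the homotopy $h$ along internal edges, and these contributions are precisely what produce the non-trivial Barannikov--Kontsevich--Losev--Shadrin operations (binary trees with vertices labelled by $\bullet$, edges labelled by $h$, one leaf labelled by $\Delta$). Only the operations indexed by the generators $\delta^m\in\overline{T}^c(\delta)\subset\cH$ must vanish; the copies of $\Delta$ occurring inside the $\overline{Grav}^*$-indexed operations must survive, or the transferred structure would reduce to the bare induced commutative product.
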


In geometrical terms, this lifts the action of the operad of moduli space of genus $0$ stable curves (cohomological field theory) into a certain action of the cooperad of the open moduli space of genus $0$ curves (extended cohomological field theory):
$$\xymatrix@C=30pt{H^{\bullet+1}({\mathcal{M}}_{0, n+1})   \ar@{..>}[r]  \ar[d] &   \End_{H(A)} \\
H_\bullet (\overline{\mathcal{M}}_{0, n+1})  \ar[ur]_{\ \quad [BK-M-LS-P]} \ .&  } $$

We conclude this paper with applications to the Poisson geometry, Lie algebra cohomology and the Mirror Symmetry conjecture. To conclude, this paper develops the homotopy theory for dg BV-algebras (homotopy skeletal BV-algebras, $\infty$-quasi-isomorphisms) necessary to study the 
Mirror Symmetry conjecture, in the same way as the homotopy theory of dg Lie algebras
was used to prove 
the deformation-quantization of Poisson manifolds  by M. Kontsevich in \cite{Kontsevich03}.


\smallskip

Some of the results of the present paper were announced in \cite{DrummondColeVallette09}. While we were typing it, V. Dotsenko and A. Khoroshkin computed in \cite{DotsenkoKhoroshkin09} the homology of the bar construction of the operad $\BV$ without the action of the symmetric groups. They used the independent method of Gr\"obner basis for shuffle operads  developed in  \cite{DotsenkoKhoroshkin10}. \\

The paper is organized as follows. We begin by recalling the Koszul resolution of the operad $\BV$ given in \cite{GCTV09}.
In the second section, we compute the homology of the Koszul dual dg cooperad $\BV^{\ac}$ and we write it as a deformation retract of $\BV^{\ac}$. In the third section, we recall the notion of {homotopy cooperad} with its homotopy properties: the homotopy transfer theorem for homotopy cooperads. With these tools in hand, we produce the minimal model of the operad $\BV$ at the end of Section~$3$. In a fourth section, we describe the associated notion of algebra, called skeletal homotopy BV-algebras. Section~$5$ deals with a generalization of the bar-cobar adjunction between operads and homotopy cooperads. The last section contains the homotopy transfer theorem for skeletal homotopy BV-algebras and the extension of the result of Barannikov-Kontsevich and Manin. \\

The reader is supposed to be familiar with the notion of an operad and operadic homological algebra, for which  we refer to the book \cite{LodayVallette10}. In the present paper, we use the same notations as used in this reference. 

We work over a field $\KK$ of characteristic $0$ and all the $\Sy$-modules $M=\lbrace M(n)\rbrace_{n\in\NN}$ are reduced, that is, $M(0)=0$. 

\section{Recollection on homotopy BV-algebras}

In this section, we recall the main results of \cite{GCTV09} needed in the rest of the text. In loc.cit.,  we made explicit a resolution of the operad $\BV$ using the Koszul duality theory. It is given by a quasi-free operad on a dg cooperad, which is smaller than the bar construction of $\BV$. 

\subsection{BV-algebras}

\begin{defi}[Batalin-Vilkovisky algebras]
A \emph{differential graded Batalin-Vilkovisky algebra}, or \emph{dg BV-algebra} for short,  is a differential graded vector space $(A, d_A)$ endowed with
\begin{itemize}
\item[$\triangleright$]  a symmetric binary product $\bullet$ of degree
$0$,

\item[$\triangleright$]  a symmetric  bracket $\langle\; ,  \, \rangle$  of degree $+1$,

\item[$\triangleright$]  a unary operator $\Delta$ of degree
$+1$,
\end{itemize}
such that $d_A$ is a derivation with respect to each of them and such that
\begin{itemize}
\item[$\rhd$] the product $\bullet$ is associative,

\item[$\rhd$] the bracket satisfies the Jacobi identity

$$\langle \langle\; ,  \, \rangle,  \, \rangle \ +\ \langle \langle\; ,  \, \rangle,  \, \rangle.(123) \ +\ \langle \langle\; ,  \, \rangle,  \, \rangle.(321)\ = \ 0,$$

\item[$\rhd$] the product $\bullet$ and the bracket $\langle\;
, \,\rangle$ satisfy the Leibniz relation
$$\langle\, \textrm{-} , \textrm{-}\bullet \textrm{-}\, \rangle\ =\
(\langle\,\textrm{-}, \textrm{-}\,\rangle\bullet \textrm{-})\
+ \ (\textrm{-}\bullet \langle\,\textrm{-},\textrm{-}\,\rangle).(12),   $$

\item[$\rhd$] the unary operator $\Delta$ satisfies $\Delta^2=0$,

 \item[$\rhd$] the bracket is the obstruction to $\Delta$ being a derivation with respect to  the product $\bullet$
$$\langle\,\textrm{-} , \textrm{-}\,\rangle\ =\ \Delta (\textrm{-} \bullet \textrm{-})\ -\
(\Delta(\textrm{-}) \bullet \textrm{-})   \ - \ (\textrm{-} \bullet
\Delta(\textrm{-})),$$
\item[$\rhd$] the operator $\Delta$ is a graded derivation with
respect to the bracket
$$\Delta (\langle\,  \textrm{-}, \textrm{-}\, \rangle) \ + \ \langle\Delta(\textrm{-}),
\textrm{-}\,\rangle \ +\ \langle\, \textrm{-}, \Delta(\textrm{-})\rangle \ = \ 0.$$

\end{itemize}
\end{defi}

The operad encoding BV-algebras is the operad defined by generators and relations
$$\BV:=\F(V)/(R),$$
where $\F(V)$ denotes the free operad on the $\Sy$-module
$$V:= \KK_2  \bullet \oplus \KK_2  \la\, ,\, \ra \oplus
\KK \Delta\ , $$ with $\KK_2$ being the trivial representation of the symmetric group $\Sy_2$.
The space of relations $R$ is the sub-$\Sy$-module
of $\F(V)$ generated by the  relations `$\rhd$' given above. The
basis elements $\bullet$, $\,\la\:,\,\ra$, $\,\Delta$ are of degree 0, 1, and 1. Since
 the relations are homogeneous, the operad $\BV$
is graded by this degree, termed the {\em homological degree}. 

We denote by $Com$ the operad generated by the symmetric product $\bullet$ and the associativity relation. We denote by $Lie_1$ the operad generated by the symmetric bracket $\la \; , \, \ra$ and the Jacobi relation; it is the operad encoding Lie algebra structures on the suspension of a space. The operad $\G$ governing Gerstenhaber algebras is defined similarly. Its underlying $\Sy$-module is isomorphic to $Com \circ Lie_1$, on which the operad structure is given by means of distributive laws, see \cite[Section~$8.6$]{LodayVallette10}.

\subsection{Quadratic analogue} 
We consider the homogeneous quadratic analogue $\qBV$ of the operad $\BV$. This operad is defined by the same spaces of generators $V$ and relations except for the inhomogeneous relation 
\[
\begin{pspicture}(-.7,.43)(.7,1.1)
\psline(0,0)(0,.3)
\psline(.2,.7)(.5,1)
\psline(-.2,.7)(-.5,1)
\rput(0,.5){$\bullet$}
\uput[d](0,.1){${\scriptstyle \Delta}$}
\end{pspicture}
-
\begin{pspicture}(-.7,.43)(.7,1.1)
\psline(0,0)(0,.3)
\psline(.2,.7)(.5,1)
\psline(-.2,.7)(-.5,1)
\rput(0,.5){$\bullet$}
\uput[u](-.5,.9){${\scriptstyle \Delta}$}
\end{pspicture}
-
\begin{pspicture}(-.7,.43)(.7,1.1)
\psline(0,0)(0,.3)
\psline(.2,.7)(.5,1)
\psline(-.2,.7)(-.5,1)
\rput(0,.5){$\bullet$}
\uput[u](.5,.9){${\scriptstyle \Delta}$}
\end{pspicture}
-
\begin{pspicture}(-.7,.43)(.7,1.1)
\psline(0,0)(0,.3)
\psline(.2,.7)(.5,1)
\psline(-.2,.7)(-.5,1)
\rput(0,.5){${\scriptstyle \langle\, , \, \rangle}$}
\end{pspicture}
,
\]
\smallskip

\noindent
which is changed into the homogenous relation: 
\[
\begin{pspicture}(-.7,.43)(.7,1.1)
\psline(0,0)(0,.3)
\psline(.2,.7)(.5,1)
\psline(-.2,.7)(-.5,1)
\rput(0,.5){$\bullet$}
\uput[d](0,.1){${\scriptstyle \Delta}$}
\end{pspicture}
-
\begin{pspicture}(-.7,.43)(.7,1.1)
\psline(0,0)(0,.3)
\psline(.2,.7)(.5,1)
\psline(-.2,.7)(-.5,1)
\rput(0,.5){$\bullet$}
\uput[u](-.5,.9){${\scriptstyle \Delta}$}
\end{pspicture}
-
\begin{pspicture}(-.7,.43)(.7,1.1)
\psline(0,0)(0,.3)
\psline(.2,.7)(.5,1)
\psline(-.2,.7)(-.5,1)
\rput(0,.5){$\bullet$}
\uput[u](.5,.9){${\scriptstyle \Delta}$}
\end{pspicture}
.
\]
\smallskip

We denote this homogenous quadratic space of relations by $\qR$. This operad $\qBV=\F(V)/(\qR)$ is also given by means of distributive laws on the $\Sy$-module 
 $$\qBV\cong\G\circ D\cong Com \circ Lie_1 \circ \KK[\Delta]/(\Delta^2)\ ,$$
where $D:= \KK[\Delta]/(\Delta^2)$ is the algebra of dual numbers,  see \cite[Proposition~$3$]{GCTV09}. 
  
\subsection{Koszul dual cooperad of the operad $\qBV$}  
We denote by $s$ the homological suspension, which shifts the homological degree by $+1$. Recall that the \emph{Koszul dual cooperad} of a quadratic operad $\F(V)/(\qR)$ is defined as the sub-cooperad $\mathcal{C}(sV, s^2\qR)\subset \F^c(sV)$ cogenerated by the suspension $sV$ of $V$ with corelators in the double suspension $s^2 \qR$ of $\qR$, see  \cite[Chapter~7]{LodayVallette10}. Namely, it is the ``smallest'' sub-cooperad of the cofree cooperad on $sV$ which contains the corelators $s^2\qR$.

  We denote by ${\mathcal S}^c:=\End^c_{\KK s^{-1}}=\lbrace \Hom((\KK s^{-1})^{\otimes n},\KK s^{-1})\rbrace_{n \in \NN}$ the \emph{suspension} cooperad of endomorphisms
of the one dimensional vector space $s^{-1}\KK$ concentrated in degree $-1$. The desuspension ${\mathcal S}^c \mathcal C$ of a cooperad $\mathcal{C}$ is the cooperad defined by the aritywise tensor product, called the Hadamard tensor product, $({\mathcal S}^c \mathcal C)(n)=({\mathcal S}^c\otimes_{\textrm{H}} \mathcal C)(n):={\mathcal S}^c(n) \otimes \mathcal C(n)$. 
The underlying $\Sy$-module of the Koszul dual cooperad of $\qBV$ is equal to 
$$\qBV^{\ac}\cong T^c(\delta)\circ {\mathcal S}^c Com_1^c \circ {\mathcal S}^c Lie^c\ ,$$ 
where $T^c(\delta)\cong\KK[\delta]\cong D^{\ac}$ is the counital cofree coalgebra on a degree $2$
generator $\delta:=s\Delta$, where  $ Lie^c\cong Lie^*$ is the cooperad encoding Lie coalgebras and where $ Com_1^c\cong Com_{-1}^*$ is the cooperad encoding cocommutative
 coalgebra structures on the suspension of a space, see \cite[Corollary~$4$]{GCTV09}.
 The degree of the elements in $$\KK \delta^m \otimes
{\mathcal S}^c Com_1^c(t) \otimes {\mathcal S}^c Lie^c(p_1)  \otimes \ldots \otimes
{\mathcal S}^c Lie^c(p_t)\subset \qBV^{\ac}$$ is $n+t+2m-2$, for
$n=p_1+\cdots+p_t$.

\subsection{Koszul dual dg cooperad of the operad $\BV$}  

We consider the map $\varphi\, :\, \qR \to V$ defined by
\[
\begin{pspicture}(-.7,.43)(.7,1.1)
\psline(0,0)(0,.3)
\psline(.2,.7)(.5,1)
\psline(-.2,.7)(-.5,1)
\rput(0,.5){$\bullet$}
\uput[d](0,.1){${\scriptstyle \Delta}$}
\end{pspicture}
-
\begin{pspicture}(-.7,.43)(.7,1.1)
\psline(0,0)(0,.3)
\psline(.2,.7)(.5,1)
\psline(-.2,.7)(-.5,1)
\rput(0,.5){$\bullet$}
\uput[u](-.5,.9){${\scriptstyle \Delta}$}
\end{pspicture}
-
\begin{pspicture}(-.7,.43)(.7,1.1)
\psline(0,0)(0,.3)
\psline(.2,.7)(.5,1)
\psline(-.2,.7)(-.5,1)
\rput(0,.5){$\bullet$}
\uput[u](.5,.9){${\scriptstyle \Delta}$}
\end{pspicture}
\longmapsto
\begin{pspicture}(-.7,.43)(.7,1.1)
\psline(0,0)(0,.3)
\psline(.2,.7)(.5,1)
\psline(-.2,.7)(-.5,1)
\rput(0,.5){${\scriptstyle \langle\, , \, \rangle}$}
\end{pspicture}
\]
\smallskip

\noindent
and $0$ 
 on the other relations of $\qR$, so that the graph of $\varphi$  is equal
to the space of relations $R$. The induced map $\qBV^{\ac} \to sV$ extends to a square-zero coderivation $d_\varphi$ on the cooperad $\qBV^{\ac}$,  see  \cite[Lemma~$5$]{GCTV09}. The dg cooperad 
$$\BV^{\ac}:=(\qBV^{\ac}, d_\varphi)$$ is called the \emph{Koszul dual dg cooperad} of the inhomogeneous quadratic operad $\BV$.

We use the notation $\odot$ for the `symmetric' tensor product, that
is, the quotient of the tensor product under the permutation of
terms. In particular, we denote by $\delta^m\otimes L_1 \odot \cdots \odot L_t$
a generic element  of $T^c(\delta)\circ {\mathcal S}^c Com_1^c \circ {\mathcal S}^cLie^c$
 with $L_i\in {\mathcal S}^cLie^c$, for $i=1,\ldots, t$; the elements of ${\mathcal S}^c Com_1^c$ being implicit. Under these notations,  the coderivation $d_\varphi$ is explicitly given by
\begin{eqnarray}\label{Coder}
d_\varphi(\delta^m\otimes L_1 \odot \cdots \odot L_t)=\sum_{i=1}^t (-1)^{\varepsilon_i}  \delta^{m-1}
\otimes L_1\odot \cdots\odot L_i'\odot L_i''\odot \cdots
\odot L_t ,
\end{eqnarray} where $L_i'\odot L_i''$ is Sweedler-type
notation for  the image of $L_i$ under the binary part
$$
{\mathcal S}^c Lie^c \to {\mathcal S}^c Lie^c(2) \otimes ({\mathcal S}^c Lie^c\otimes {\mathcal S}^c Lie^c)\twoheadrightarrow {\mathcal S}^c Lie^c\odot {\mathcal S}^c Lie^c
$$
 of the decomposition
map of the cooperad $S^c Lie^c$.
The sign, given by the Koszul rule, is
equal to $\varepsilon_i=(|L_1|+\cdots+|L_{i-1}|)$. The image of
$d_\varphi$  is equal to $0$ when $m=0$ or when $L_i\in
S^c Lie^c(1)=\KK \, \textrm{I}$ for all $i$.

\begin{remark}
Let us denote the linear dual of $\delta$ by $\hbar:=\delta^*$. This is an element of homological degree $-2$. The Koszul dual operad is defined by 
$\qBV^!:={\mathcal S}{\qBV^{\ac}}^*={\mathcal S} \otimes_{\textrm{H}}{\qBV^{\ac}}^* $, where $S$ stands for the endomorphism operad ${\mathcal S}:=\End^c_{\KK s^{-1}}$. 
Up to a degree shift, the Koszul  dual dg operad $\BV^!:=(\qBV^!, {}^t d_\varphi)$, when viewed as a cohomologically graded differential  $\KK[[\hbar]]$-operad, corresponds  to the Beilinson-Drinfeld operad \cite{BeilinsonDrinfeld04, CostelloGwilliam11}. 
\end{remark}

\subsection{Koszul resolution of the operad $\BV$}
We denote by $\BVK$ the quasi-free operad given by the cobar construction on $\BV^{\ac}$:
$$\BVK := \Omega\,  \BV^{\ac}\cong(\F(s^{-1}\overline{\qBV}^{\ac}),\, d=d_1+d_2),$$
where $d_1$ is the unique derivation which extends the internal differential $d_\varphi$ and where $d_2$ is the unique derivation which extends the infinitesimal (or partial) coproduct of the cooperad $\qBV^{\ac}$, see \cite[Section~$6.5$]{LodayVallette10}.
The total derivation $d=d_1+d_2$  squares to zero
and faithfully encodes the algebraic structure of
the dg cooperad
 on
$\BV^{\ac}$. The space of generators of this quasi-free operad is isomorphic to
$T^c(\delta)\circ {\mathcal S}^c Com_1^c \circ {\mathcal S}^c Lie^c$, up to coaugmentation and desuspension. 

\begin{thm}\cite[Theorem~$6$]{GCTV09}
The operad $\BVK$ is a resolution  of the operad $\BV$
$$\BVK=\Omega\,  \BV^{\ac} =\left(\F(s^{-1}\overline{\qBV}^{\ac}),\,d=d_1+d_2\right)\;\qi \;\BV\ . $$
\end{thm}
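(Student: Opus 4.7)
The plan is to deduce the theorem from the inhomogeneous Koszul duality theory for operads, which says that if a quadratic-linear operad $\mathcal{P} = \mathcal{F}(V)/(R)$ satisfies certain compatibility conditions with its quadratic analogue, and if that quadratic analogue is Koszul, then the cobar construction on the Koszul dual dg cooperad gives a quasi-free resolution. So I would break the proof into three steps: (i) establish Koszulness of the homogeneous operad $\qBV$; (ii) verify the two inhomogeneous compatibility conditions, usually called $(ql_1)$ and $(ql_2)$ in \cite{LodayVallette10}; (iii) invoke the general theorem to conclude.

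For step (i), I would exploit the isomorphism $\qBV \cong Com \circ Lie_1 \circ D$ provided by the distributive laws recalled above, where $D = \KK[\Delta]/(\Delta^2)$. The three component operads $Com$, $Lie_1$ and $D$ are each Koszul by classical results. Since Koszulness is preserved under distributive laws whose rewriting rules satisfy the diamond lemma (equivalently, the induced map from the free product to the composite $\Sy$-module is an isomorphism), and since both distributive laws here arise exactly from the Leibniz relation and the derivation property of $\Delta$ with respect to the bracket, one verifies the diamond condition by dimension count (the associated quadratic operad has the right PBW-type basis) and concludes that $\qBV$ is Koszul, with explicit Koszul dual cooperad $\qBV^{\ac} \cong T^c(\delta) \circ {\mathcal S}^c Com_1^c \circ {\mathcal S}^c Lie^c$.

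For step (ii), condition $(ql_1)$ asks that the image of the inhomogeneous relations under projection onto $V$ lands in the quadratic part, i.e.\ $R \cap V = \{0\}$, which is immediate here since $R$ only contains relations of arity at least $2$. Condition $(ql_2)$ is the compatibility $R \cap \mathcal{F}(V)^{(2)} = \qR$ together with the closure condition on the map $\varphi$ expressing that the induced coderivation $d_\varphi$ on $\qBV^{\ac}$ squares to zero; the square-zero property is precisely Lemma~$5$ of \cite{GCTV09} and amounts to the fact that applying $\varphi$ twice and composing with the cooperadic decomposition produces only terms already in $\qR$, i.e.\ $\varphi \circ (\mathrm{id} \otimes \varphi - \varphi \otimes \mathrm{id})$ vanishes modulo $R$. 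This is a finite check on the generating relations.

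Once (i) and (ii) are in hand, step (iii) applies the inhomogeneous Koszul criterion directly: the twisting morphism $\qBV^{\ac} \to \BV$ obtained by composing the universal twisting morphism with the projection is Koszul, hence the associated twisted cobar construction $\Omega \BV^{\ac} = (\mathcal{F}(s^{-1}\overline{\qBV}^{\ac}), d_1 + d_2)$ is quasi-isomorphic to $\BV$. The main obstacle is really step (i): although the statement ``$\qBV \cong Com \circ Lie_1 \circ D$'' looks like it comes for free, one must genuinely check that both distributive laws (Leibniz between $Com$ and $Lie_1$, and Leibniz of $\Delta$ over the bracket and the non-derivation failure over the product) are well-defined and Koszul-compatible, and then iterate the distributive-law Koszulness theorem. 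Everything after that is, by comparison, a formal unwinding of definitions.
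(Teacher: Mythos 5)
Your proposal follows essentially the same route as the proof in \cite{GCTV09} that this theorem is quoted from: establish Koszulness of the quadratic analogue $\qBV\cong Com\circ Lie_1\circ D$ by iterating the distributive-law criterion, check the quadratic-linear conditions $(ql_1)$ and $(ql_2)$ (the latter yielding the square-zero coderivation $d_\varphi$ of Lemma~5 of loc.\ cit.), and invoke the inhomogeneous Koszul duality theorem. The steps, the key intermediate results, and the identification of the distributive-law verification as the real content all match the cited argument.
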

It is called the \emph{Koszul resolution} of $\BV$. Notice that it is much smaller than the bar-cobar resolution $\Omega  \B  \, \BV \qi \BV$. The Koszul resolution and the bar-cobar resolution are both quadratic. But they are not minimal resolutions: they are both quasi-free operads with a differential which is the sum of a quadratic term ($d_2$) and a \emph{non-trivial linear term} ($d_1$). 

Algebras over the operad $\BV_\infty$ are called \emph{homotopy BV-algebras}. For an explicit description of this algebraic notion together with its homotopy properties, 
we refer the reader to  \cite{GCTV09}.

\subsection{Homotopy transfer theorem for homotopy BV-algebras}
We consider the data 
\begin{eqnarray*}
&\xymatrix{     *{ \quad \ \  \quad (A, d_A)\ } \ar@(dl,ul)[]^{h}\ \ar@<0.5ex>[r]^{p} & *{\
(H,d_H)\quad \ \  \ \quad }  \ar@<0.5ex>[l]^{i}\ , }&
\end{eqnarray*}
of two chain complexes, where $i$ and $p$ are chain maps and where $h$ has degree $1$. 
It is called a \emph{homotopy retract} when $\textrm{id}_A-i p =d_A  h+ h  d_A$ and when, equivalently, $i$ or $p$ is a quasi-isomorphism. If, moreover, the composite $pi$ is equal to $\textrm{id}_H$, then it is called a \emph{deformation retract}.

\begin{thm}\cite[Theorem~$33$]{GCTV09}
Any homotopy BV-algebra structure on $A$ transfers to $H$ through a homotopy retract such that $i$ extends to an $\infty$-quasi-isomorphism. 
\end{thm}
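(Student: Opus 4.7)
The plan is to proceed via the Koszul resolution and the twisting morphism formalism. Since $\BVK = \Omega \BV^{\ac}$ is a quasi-free operad on the dg cooperad $\BV^{\ac}$, the bar-cobar adjunction identifies a $\BVK$-algebra structure on $A$ with a twisting morphism $\alpha_A \in \Tw(\BV^{\ac}, \End_A)$: a degree $-1$ $\Sy$-equivariant map satisfying the Maurer-Cartan equation in the convolution dg Lie algebra $\Hom_\Sy(\BV^{\ac}, \End_A)$, whose differential combines $d_A$, the endomorphism differential, and the internal coderivation $d_\varphi$ on $\BV^{\ac}$. The task therefore becomes to transfer $\alpha_A$ to a twisting morphism $\alpha_H : \BV^{\ac} \to \End_H$ and then construct an $\infty$-morphism whose linear term is $i$.

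Next, I would define $\alpha_H$ by the classical Van der Laan tree-sum formula: for $c \in \BV^{\ac}(n)$, iterate the cooperad decomposition of $\BV^{\ac}$ to express $c$ as a sum over rooted trees with vertices decorated by $\BV^{\ac}$; then replace each vertex by the corresponding component of $\alpha_A$, each internal edge by the homotopy $h$, each input leaf by $i$, and the root by $p$. The resulting sum over trees is an $n$-ary operation on $H$, and these assemble into $\alpha_H$. The $\infty$-morphism $i_\infty : (H, \alpha_H) \rightsquigarrow (A, \alpha_A)$ is obtained by the same recipe with the root cap $p$ replaced by the identity of $A$ (so that the tree output lands in $A$), which makes its arity-$1$ component equal to $i$ and hence a quasi-isomorphism.

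The core of the proof is then the verification that $\alpha_H$ satisfies the Maurer-Cartan equation and that $i_\infty$ satisfies the $\infty$-morphism relation. The standard argument runs tree-by-tree and combines four ingredients: the homotopy retract identity $\id_A - ip = d_A h + h d_A$, which rewrites $ip$ appearing on any internal edge; the chain map conditions on $i$ and $p$; the Maurer-Cartan equation for $\alpha_A$, which handles the terms produced by operadic composition at a single vertex; and the coderivation property of $d_\varphi$ on $\BV^{\ac}$, which is precisely what makes the terms coming from $d_\varphi(c)$ match the splitting of a vertex into two adjacent vertices joined by an edge carrying $ip$. All other boundary terms telescope across adjacent internal edges and cancel in pairs.

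The main obstacle is sign bookkeeping, in particular matching the terms produced by $d_\varphi$ (which decrements the exponent of $\delta$ and splits one factor of $\mathcal{S}^c Lie^c$ via its binary cobracket) against those produced by the quadratic cobar differential when a vertex is split. Because $\BV^{\ac}$ is an honest dg cooperad, the classical homotopy transfer machinery for algebras over $\Omega \mathcal{C}$ applies without modification; this is precisely what breaks down for the minimal model of $\BV$ treated later in the paper, where the Koszul dual is only a homotopy cooperad and the new formulas of Section 5 are needed.
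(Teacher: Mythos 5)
Your route is the same one the paper relies on: the statement is quoted from \cite[Theorem~33]{GCTV09}, and, as the paper itself recalls in Section~6, that proof rests on the classical bar-cobar adjunction $\Hom_{\mathsf{dg}\,\mathsf{op}}(\Omega\,\Po^{\ac},\End_A)\cong\Tw(\Po^{\ac},\End_A)\cong\Hom_{\mathsf{dg}\,\mathsf{coop}}(\Po^{\ac},\B\,\End_A)$ together with the Van der Laan quasi-isomorphism of dg cooperads $\Psi:\B\,\End_A\to\B\,\End_H$, whose generating map is exactly your tree formula ($i$ on the leaves, $h$ on the internal edges, $p$ at the root). Your direct tree-by-tree Maurer--Cartan verification is the unpacked form of the statement that $\Psi$ is a morphism of dg cooperads, and your closing observation --- that the argument works because $\BV^{\ac}$ is an honest dg cooperad and breaks for the minimal model --- is precisely the point the paper makes when it introduces the homotopy bar-cobar adjunction.

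There is, however, one concrete error. The $\infty$-quasi-isomorphism extending $i$ is \emph{not} obtained by replacing the root label $p$ by $\id_A$: on the coaugmentation coideal of $\BV^{\ac}$ its components are given by the same tree sums with the root labelled by the homotopy $h$ (and by $i$ on the coidentity); see \cite[Theorem~$10.3.6$]{LodayVallette10}, or compare the formula $p_t=\cdots\circ h$ versus $\widetilde{\Delta}_t=\cdots\circ i$ in the paper's Theorem~\ref{thm:HTTCoop}. With $\id_A$ at the root the $\infty$-morphism relation already fails in the lowest weight: for the binary generator, the candidate component $\mu_A\circ(i\otimes i)$ is a cycle in $\Hom(H^{\otimes 2},A)$ because $\mu_A$ and $i$ are chain maps, whereas the relation requires its boundary to equal $(i\,p-\id_A)\circ\mu_A\circ(i\otimes i)=-(d_Ah+hd_A)\circ\mu_A\circ(i\otimes i)$, which is nonzero in general. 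It is exactly the $h$ at the root that produces the term $\id_A-ip$ needed to start the telescoping you invoke; without it there is nothing at the root for the boundary terms to cancel against. The rest of the proposal is correct and coincides with the paper's argument.
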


\section{The homology of $\BV^\antishriek$ as a deformation retract}\label{sec:homotopy}
The purpose of this section is to construct an explicit contracting homotopy for the chain complex $\BV^{\ac}:=(\qBV^\antishriek, d_\varphi)$.  This is a necessary ingredient for the construction of  the minimal model of the operad $\BV$ given in the next section.  
As a byproduct, this computes  the homology of the bar construction of the operad $\BV$ in terms of  the homology of the moduli space $\mathcal{M}_{0,n+1}$ of genus $0$ curves.  The main result of this section is the following theorem. 

\begin{thm}\label{thm:MainDefRetract}
The various maps defined in this section form the following deformation retract:
\begin{eqnarray*}
\xymatrix@C=7pt{     *{\qquad\quad
({\qBV}^\antishriek\cong{T}^c(\delta)\otimes{\G}^\antishriek, d_\varphi\cong \delta^{-1}\otimes d_\psi)  \  \ } \ar@<13ex>@(dl,ul)[]^{\delta\otimes H}\ \ar@<0.5ex>[r] & *{\ 
(\overline{T}^c(\delta)\otimes \I   \oplus 1\otimes \G^{\ac}/ \im d_\psi\cong \overline{T}^c(\delta) \oplus {\mathcal S}^{-1} Grav^*, 0). \quad \ \  \ \quad }  \ar@<0.5ex>[l] &  & \qquad }
\end{eqnarray*}
\end{thm}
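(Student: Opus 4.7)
The plan is to exploit the factorization $\qBV^{\ac} \cong T^c(\delta) \otimes \G^{\ac}$ to reduce the computation to an analysis of a smaller complex. Formula~\eqref{Coder} shows that $d_\varphi$ strictly decreases the $\delta$-degree by one while acting on $\G^{\ac}$ by splitting a single Lie cofactor; writing this as $d_\varphi = \delta^{-1} \otimes d_\psi$, where $d_\psi$ is the degree $-1$ endomorphism of $\G^{\ac}$ induced by the binary part of the decomposition map of ${\mathcal S}^c Lie^c$, the complex $\qBV^{\ac}$ decomposes into $\delta$-columns each isomorphic to $\G^{\ac}$, with $d_\varphi$ moving one column down and applying $d_\psi$ within it. Everything reduces to understanding $(\G^{\ac}, d_\psi)$.

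Since $d_\psi$ preserves total arity, I would next analyze $(\G^{\ac}, d_\psi)$ one arity at a time. In arity $1$ only the unit $\I$ appears and $d_\psi$ vanishes, so the homology is $\I$. In arity $n\geq 2$ the goal is to show that the complex is acyclic and that the cokernel $\G^{\ac}/\im d_\psi$ is isomorphic to ${\mathcal S}^{-1} Grav^*$. For the latter I would set up a natural candidate surjection $\G^{\ac} \twoheadrightarrow {\mathcal S}^{-1} Grav^*$ using Getzler's identification $H^\bullet(\mathcal{M}_{0,n+1}) \cong Grav$ and check that its kernel equals $\im d_\psi$; equivalently, one can filter $\G^{\ac}$ by the number $t$ of ${\mathcal S}^c Com_1^c$-cofactors and run a spectral sequence whose pages compute a Chevalley-Eilenberg-type cohomology for free Lie coalgebras, deducing both the acyclicity and the identification of the cokernel.

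In parallel I would construct an explicit degree $+1$ contracting homotopy $H : \G^{\ac} \to \G^{\ac}$ realizing these statements, that is, satisfying $d_\psi H + H d_\psi = \id - ip$ together with the side conditions $pi = \id$, $H^2 = 0$, $Hi = 0$, and $pH = 0$, where $i$ and $p$ realize the deformation retract onto the homology. A natural candidate for $H$ singles out a canonical input (for instance the root or a distinguished leaf of minimal label), locates the unique Lie cofactor containing that input, and unsplits it against a chosen neighbor, thereby providing a one-sided inverse to $d_\psi$; this also determines the section of the cokernel. Promoting $H$ to the full complex by setting $H_{\mathrm{full}} := \delta \otimes H$, the identity $d_\varphi H_{\mathrm{full}} + H_{\mathrm{full}} d_\varphi = \id - ip$ restricts, column by column, to the homotopy relation for $d_\psi$: on $\delta$-columns with $m \geq 1$ to the full contracting homotopy on $\G^{\ac}$, and on the bottom column $m = 0$ to the relation $d_\psi H = \id - ip$. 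The main obstacle is the arity-$n\geq 2$ identification of $\G^{\ac}/\im d_\psi$ with ${\mathcal S}^{-1} Grav^*$, where Getzler's geometric description of the gravity operad must be translated into a purely operadic statement, together with producing the contracting homotopy in an $\mathbb{S}$-equivariant fashion without introducing unnatural orderings on the inputs.
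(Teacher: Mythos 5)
Your global architecture coincides with the paper's: factor $\qBV^{\ac}\cong T^c(\delta)\otimes\G^{\ac}$ with $d_\varphi\cong\delta^{-1}\otimes d_\psi$, contract each $\delta$-column $(\G^{\ac},d_\psi)$ onto $\I$, observe that the bottom column (on which nothing further acts) contributes the cokernel $\G^{\ac}/\im d_\psi$, identify that cokernel with ${\mathcal S}^{-1}Grav^*$ by dualizing Getzler's theorem ${\mathcal S}^{-1}Grav\cong\Ker\Delta$, and assemble the global homotopy as $\delta\otimes H$. All of this matches the paper, including the column-by-column bookkeeping at the end.

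The genuine gap is the construction of $H$ itself, which is the technical heart of the proof and which your ``natural candidate'' does not supply. A homotopy that singles out a distinguished leaf, locates the Lie cofactor containing it and unsplits it against a chosen neighbour has two problems, only one of which you flag. The one you flag is $\Sy$-equivariance. The one you do not flag is that $H$ must preserve the sub-$\Sy$-module $\G^{\ac}\subset\cofree(M)$: its elements are very particular linear combinations of decorated binary trees (the kernel of the coderivation induced by composition in $\G$), and an operation that modifies one chosen vertex destroys the cancellations defining this kernel. The paper's solution is to take $H$ to be the \emph{weighted average over all vertices} of the elementary operation $h\colon\beta\mapsto\mu$, with weight $\omega(v)/\binom{n+1}{2}$ at the vertex $v$, where $\omega(v)=m_vn_v$ is Loday's associahedron weight (the product of the numbers of leaves above the two inputs of $v$). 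The identity $\sum_v\omega(v)=\binom{n+1}{2}$ yields $d_\psi H+Hd_\psi=\id$ in arity $\geq 2$, and a separate, genuinely nontrivial lemma --- checking, locally at every contraction tree, that the six-dimensional space of shifted Gerstenhaber relations inside the twelve-dimensional space $\cofree(M)^{(2)}$ is stable under the weighted application of $h$, whatever the leaf counts $n_1,n_2,n_3$ --- shows that this specific choice of weights makes $H$ restrict to $\G^{\ac}$. Without producing some equivariant $H$ and proving that it preserves $\G^{\ac}$, the deformation retract is not established. (A smaller omission: the compatibility $d_\varphi\cong\delta^{-1}\otimes d_\psi$ is not automatic from the distributive-law isomorphism of $\Sy$-modules; the paper verifies it by exhibiting the inverse $\theta$, which inserts $\delta$-labelled bivalent vertices on edges, and checking the coderivations agree on cogenerators.)
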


\subsection{Trees}\label{subsec:Trees}
A \emph{reduced rooted tree} is a rooted tree 
whose vertices have at least one input.
We consider the category of reduced rooted trees with leaves labeled bijectively from $1$ to $n$, denoted by $\Tree$. 
The trivial tree $|$ is considered to be part of $\Tree$.
Since the trees are reduced, there are only trivial isomorphisms of trees. So we identify the isomorphism classes of trees with the trees themselves; see \cite[Appendix~C]{LodayVallette10} for more details. 

We consider the planar representation of reduced trees provided by shuffle trees, see E. Hoffbeck \cite[$2.8$]{Hoffbeck10}, V. Dotsenko and A. Khoroshkin \cite[$3.1$]{DotsenkoKhoroshkin10}, and \cite[$8.2$]{LodayVallette10}.
We define a total order on the vertices of a tree by reading its planar representation from leaf $1$ to the root by following the internal edges without crossing them. See Figure~\ref{Fig:PlanarRep} for an example. 

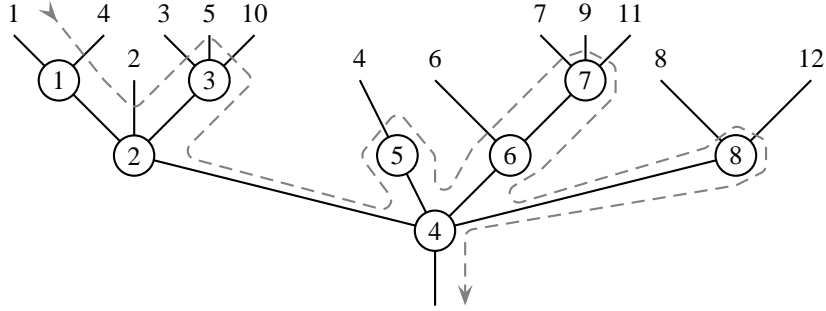
\begin{figure}[h]
\centering

\psset{unit=2cm}
\begin{pspicture}(-1.3,-.2)(4.1,2.3)
\psline(1.5,0)(1.5,.5)
\psline(1.5,.5)(1.25,1)
\psline(1.5,.5)(2,1)
\psline(1.5,.5)(3.5,1)
\psline(1.5,.5)(-.5,1)
\psline(-.5,1)(-1,1.5)
\psline(-.5,1)(-.5,1.5)
\uput[u](-.5,1.5){2}
\psline(-.5,1)(0,1.5)
\psline(-1,1.5)(-1.3,1.8)
\uput[u](-1.3,1.8){1}
\psline(-1,1.5)(-.7,1.8)
\uput[u](-.7,1.8){4}
\psline(0,1.5)(-.3,1.8)
\uput[u](-.3,1.8){3}
\psline(0,1.5)(0,1.8)
\uput[u](0,1.8){5}
\psline(0,1.5)(.3,1.8)
\uput[u](.3,1.8){10}
\psline(1.25,1)(1,1.5)
\uput[u](1,1.5){4}
\psline(2,1)(1.5,1.5)
\uput[u](1.5,1.5){6}
\psline(2,1)(2.5,1.5)
\psline(2.5,1.5)(2.2,1.8)
\uput[u](2.2,1.8){7}
\psline(2.5,1.5)(2.5,1.8)
\uput[u](2.5,1.8){9}
\psline(2.5,1.5)(2.8,1.8)
\uput[u](2.8,1.8){11}
\psline(3.5,1)(3,1.5)
\uput[u](3,1.5){8}
\psline(3.5,1)(4,1.5)
\uput[u](4,1.5){12}
\rput(1.5,.5){\pscirclebox[fillstyle=solid]{4}}
\rput(1.25,1){\pscirclebox[fillstyle=solid]{5}}
\rput(2,1){\pscirclebox[fillstyle=solid]{6}}
\rput(3.5,1){\pscirclebox[fillstyle=solid]{8}}
\rput(-.5,1){\pscirclebox[fillstyle=solid]{2}}
\rput(-1,1.5){\pscirclebox[fillstyle=solid]{1}}
\rput(0,1.5){\pscirclebox[fillstyle=solid]{3}}
\rput(2.5,1.5){\pscirclebox[fillstyle=solid]{7}}
\psline[linecolor=gray, linearc=.06, linestyle=dashed, arrowsize=.1]{>->}(-1.1,2)(-.7,1.5)(-.5,1.3)(0,1.8)(.3,1.5)(-.2,1)
(1.3,.6)(1,1)(1.25,1.3)(1.5,1)(1.5,.6)(1.75,1.05)(2.35,1.65)
(2.5,1.7)(2.7,1.6)(2.7,1.4)(2.2,.9)(1.6,.55)(3.3,1.05)(3.5,1.2)(3.7,1.1)(3.7,.9)(3.5,.8)(1.7,.5)(1.7,0)
\end{pspicture}

\caption{Example of a planar representation of a tree with ordered vertices} \label{Fig:PlanarRep}
\end{figure}

\subsection{Free operad and cofree cooperad}\label{subsec:FreeOp}

The underlying $\Sy$-module of the  free operad $\free(V)$ on an $\Sy$-module $V$ is given by the direct sum 
$\bigoplus_{t\in \Tree} t(V)$, where $t(V)$ is the treewise tensor module obtained by labeling every vertex of the tree $t$ with an element of $V$ according to the arity and the action of the symmetric groups. The operadic composition map is given the the grafting of trees.
Dually, the underlying $\Sy$-module of the conilpotent cofree cooperad $\cofree(V)$ is equal to the same direct sum over trees and  its decomposition map is given by cutting the trees horizontally; see \cite[Chapter~$5$]{LodayVallette10} for more details.

The subcategory of trees with $n$ vertices is denoted by $\Tree^{(n)}$. The number of vertices endows the free operad $\free(V)\cong \bigoplus_{n \in \NN} \free(V)^{(n)}$ and the conilpotent cofree cooperad $\cofree(V)\cong \bigoplus_{n \in \NN} \cofree(V)^{(n)}$ with a weight grading. We represent a labeled tree by $t(v_1, \ldots, v_n)$, using the aforementioned total order on vertices. 

\subsection{Coderivations on the cofree cooperad}\label{subsec:codercofree}
Coderivations on cofree cooperads are characterized by their projection onto the space of generators.  In other words,
\begin{lemma}\label{lemma:cofreecoder}
Let $\eta$ be a homogeneous morphism $\cofree(M)\to M$ of graded $S$ modules.  Then there exists a unique coderivation $d_\eta$ on $\cofree(M)$ extending $\eta$, given on an element of $\cofree(M)$ represented by a decorated tree
by applying $\eta$ to any 
subtree.
\end{lemma}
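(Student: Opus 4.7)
The plan is to exploit the universal property of the conilpotent cofree cooperad, exactly dual to the standard proof that a derivation on a free operad is determined by its restriction to generators.

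For uniqueness, suppose $d$ is a coderivation on $\cofree(M)$ with $\pi \circ d = \eta$, where $\pi : \cofree(M) \twoheadrightarrow M$ denotes the canonical projection onto the cogenerators. By definition of a coderivation, the infinitesimal decomposition map $\Delta_{(1)}$ of $\cofree(M)$ satisfies
\[
\Delta_{(1)} \circ d = (d \circ' \id + \id \circ' d) \circ \Delta_{(1)}.
\]
Iterating this relation and projecting onto the successive quotients $\cofree(M)^{(n)}$, one expresses $d$ on a decorated tree of weight $n$ in terms of the values of $\pi \circ d = \eta$ on all subtrees. Since the cofree cooperad is conilpotent, this iteration terminates, and the value of $d$ on any element is completely determined by $\eta$.

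For existence, I would define $d_\eta$ on a representative $t(v_1,\dots,v_n)$ of $\cofree(M)$ by the explicit formula
\[
d_\eta\bigl(t(v_1,\dots,v_n)\bigr) = \sum_{s \subset t} \pm\, t/s\bigl(\eta(s(v_{i_1},\dots,v_{i_k})), \dots\bigr),
\]
where the sum runs over all connected subtrees $s$ of $t$, $t/s$ denotes the tree obtained by contracting $s$ to a single vertex, and the sign is dictated by the Koszul rule for the reordering of the tensor factors. One then checks that $\pi \circ d_\eta = \eta$ (this comes from the term where $s = t$ is the whole tree) and that $d_\eta$ is a coderivation by a direct comparison with $\Delta_{(1)}$: the two sides of the coderivation identity both enumerate pairs $(s_1, s_2)$ of disjoint (respectively nested) subtree data, with matching signs.

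The only subtle point is bookkeeping the signs and the symmetric group action when $\eta$ and the decorations $v_i$ have nontrivial degree; this is routine Koszul-sign manipulation and does not introduce any new obstacle. Everything else is formal and follows the same pattern as \cite[Proposition~6.3.14]{LodayVallette10} for cofree cooperads.
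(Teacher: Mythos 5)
Your argument is correct and is precisely the standard one: uniqueness follows from conilpotence by iterating the coderivation identity against the projection onto cogenerators, and existence is given by the subtree-contraction formula, which is exactly the description the lemma itself records. Note that the paper supplies no proof of this lemma at all --- it calls it a classical generalization of the characterization of coderivations of cofree coalgebras and defers to \cite[Section~6.3.14]{LodayVallette10} --- so your sketch simply fills in the argument the authors intend the reader to recall.
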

This is a classical generalization of the characterization of coderivation for cofree coalgebras. Here are two simple but useful examples, for more details see \cite[Section~$6.3.14$]{LodayVallette10}. 
\begin{enumerate}
\item If $\eta$ factors through the projection $\cofree(M)\twoheadrightarrow \cofreen{1}{M}=M$, then $d_\eta$ is given on a decorated tree as a signed sum over the vertices of the tree.  The summand corresponding to a vertex $v$ is the same tree with $\eta$ applied to the decoration of $v$ and all other decorations the same.  The sign is the Koszul sign.
\item 
If $\eta$ factors through the projection $\cofree(M)\twoheadrightarrow \cofreen{2}{M}$, then $d_\eta$ is given on a decorated tree as a signed sum over the internal edges of the tree.  The summand corresponding to an edge $e$ has the edge contraction along $e$ of the original tree as its underlying tree.  The decorations away from the contraction vertex are the same; the decoration on the contraction vertex is given by applying $\eta$ to the two decorated vertices involved in the contraction, viewed as a two-vertex decorated tree in $\cofreen{2}{M}$.  The sign is the Koszul sign.
\end{enumerate}
\subsection{A contracting homotopy for a cofree cooperad}
Let $M$ be the $\sS$-module which is the linear span of elements $\mu$ and $\beta$, both of arity two, in degrees $1$ and $2$ respectively, both with trivial symmetric group action. 
\[
M:=\KK_2\underbrace{ s\bullet}_\mu\oplus \KK_2\underbrace{s\la \; , \, \ra}_{\beta}  
  \]

Let $\psi$ denote the degree one morphism of graded $\sS$-modules $\psi:\cofree(M)\to M$ which first projects $\cofree(M)$ to the cogenerators $M$ and then takes $\mu$ to $\beta$ and $\beta$ to zero.  $\psi$ can be extended uniquely to a degree one coderivation $\dCE$ of $\cofree(M)$ by Lemma~\ref{lemma:cofreecoder}.  We will construct a degree $-1$ chain homotopy $H$ of graded $\sS$-modules on $\cofree(M)$ so that $\dCE H+H\dCE$ is the identity outside arity $1$ and the zero map on arity $1$ (which is one dimensional, spanned by a representative of the coimage of the counit map).

To do this, we will need a combinatorial factor.
\begin{defi}\label{defi:weightfunction}
Let $T$ be a binary tree.  The vertex $v$ has some number of leaves $m_v$ above one of its incoming edges, and another number $n_v$ above the other (we need not concern ourselves which is which).  Let the weight $\omega(v)$ be their product $m_vn_v$.
For an illustration, see Figure~\ref{fig:weight}. 
	\end{defi}
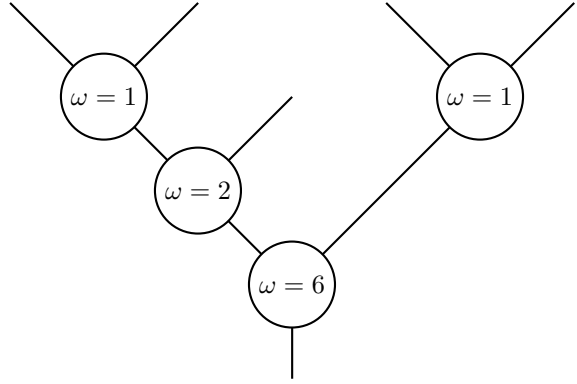
\begin{figure}[h]
\centering

\psset{unit=2.5cm}
\begin{pspicture}(-.5,-.5)(3.5,2.5)
\psline(1.5,0)(1.5,.5)
\psline(1.5,.5)(1,1)
\psline(1.5,.5)(2.5,1.5)
\psline(1,1)(1.5,1.5)
\psline(1,1)(.5,1.5)
\psline(.5,1.5)(0,2)
\psline(.5,1.5)(1,2)
\psline(2.5,1.5)(2,2)
\psline(2.5,1.5)(3,2)
\rput(1.5,.5){\pscirclebox[fillstyle=solid]{$\omega=6$}}
\rput(1,1){\pscirclebox[fillstyle=solid]{$\omega=2$}}
\rput(2.5,1.5){\pscirclebox[fillstyle=solid]{$\omega=1$}}
\rput(.5,1.5){\pscirclebox[fillstyle=solid]{$\omega=1$}}
\end{pspicture}

\caption{A binary tree with the weight $\omega$ indicated at each vertex}
\label{fig:weight}
\end{figure}
J.-L. Loday used this weight function to describe a parameterization of the Stasheff associahedra.
	\begin{lemma}\label{lemma:weightsum}\cite{Loday04a}
		The sum of the weights of all the vertices of a binary tree with $n$ vertices is $\binom{n+1}{2}$.
	\end{lemma}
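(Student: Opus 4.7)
My plan is to give a combinatorial bijective argument rather than an induction, because the quantity $m_v n_v$ has a very natural interpretation that makes the answer transparent.

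The key observation is the following. At a vertex $v$ of a binary tree, the two subtrees rooted at the children of $v$ contain $m_v$ and $n_v$ leaves respectively. Therefore the product $m_v n_v$ counts precisely the number of unordered pairs of leaves $\{\ell_1,\ell_2\}$ such that $\ell_1$ lies above one child of $v$ and $\ell_2$ lies above the other child of $v$. Equivalently, $m_v n_v$ is the number of unordered pairs of leaves whose lowest common ancestor is exactly $v$.

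Now, every unordered pair of distinct leaves of the tree has a unique lowest common ancestor among the internal vertices (since the tree is binary and rooted). Consequently,
\begin{equation*}
\sum_{v} \omega(v) \;=\; \sum_{v} m_v n_v \;=\; \#\{\text{unordered pairs of distinct leaves}\}.
\end{equation*}
Since a binary tree with $n$ internal vertices has $n+1$ leaves (a standard count: each internal vertex has two inputs, giving $2n$ edges coming from below, and $n-1$ of these are internal edges while the remaining $n+1$ are leaves), the total number of unordered pairs of leaves is $\binom{n+1}{2}$, which is the desired equality.

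No serious obstacle arises here; the only thing to verify carefully is the leaf count $n+1$, which is a direct edge-counting argument, and the fact that distinct pairs of leaves correspond to distinct lowest common ancestors with the correct multiplicity, which is immediate from the binary structure.
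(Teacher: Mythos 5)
Your argument is correct. The paper itself gives no proof of this lemma, simply citing Loday, so there is nothing to compare against; your bijective reading of $\omega(v)=m_vn_v$ as the number of unordered pairs of leaves whose lowest common ancestor is $v$, combined with the standard count of $n+1$ leaves for a binary tree with $n$ internal vertices, is a complete and self-contained justification.
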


\begin{defi}
Let $h:M\to M$ be the degree $-1$ morphism of graded $\sS$-modules given by taking $\beta$ to $\mu$ and $\mu$ to $0$.  We will use $h$ to define the contracting homotopy $H$.  

Let the homotopy $H$ be defined on a decorated tree with $n$ vertices in $\cofree(M)$ as a sum over the vertices.  For the vertex $v$, the contribution to the sum is $\frac{\omega(v)}{\binom{n+1}{2}}$ times the decorated tree obtained by applying $h$ to $v$ (including the Koszul sign).  So it has a similar flavor to extending $h$ as a coderivation, but also includes combinatorial factors.
\end{defi}
\begin{lemma}\label{lemma:Hcontracts}
The map $\dCE H+H\dCE$ is zero in arity one and the identity in all other arities.
\end{lemma}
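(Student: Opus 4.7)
The strategy is to compute $(\dCE H + H\dCE)(T)$ directly on a single decorated tree $T\in\cofree(M)$ of positive arity. Fix such a $T$ with vertices $v_1,\ldots,v_n$ labelled in the planar order of Section~\ref{subsec:Trees}, and let $d_i\in\{1,2\}$ denote the degree of the decoration at $v_i$. By the description of $\dCE$ given in Section~\ref{subsec:codercofree}(1) and the analogous vertexwise expansion of $H$, each of $\dCE H(T)$ and $H\dCE(T)$ is a double sum indexed by ordered pairs of vertices, the outer operator acting at some $v_i$ and the inner at some $v_j$. I would split the resulting expansion into the \emph{diagonal} terms $i=j$, which are the only ones that can potentially return $T$ itself, and the \emph{off-diagonal} terms $i\neq j$, which produce trees whose decoration differs from that of $T$.

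For the diagonal part, when $d_i=2$ the pair $(i,i)$ in $\dCE H(T)$ contributes $(\omega(v_i)/\binom{n+1}{2})\,T$: the Koszul sign of $H$ at $v_i$ (of degree $-1$) and that of $\dCE$ at $v_i$ (of degree $+1$) are both equal to $(-1)^{d_1+\cdots+d_{i-1}}$, and their product is $+1$. Symmetrically, when $d_i=1$ the diagonal pair in $H\dCE(T)$ again contributes $(\omega(v_i)/\binom{n+1}{2})\,T$. Summing over all $i$ and applying Lemma~\ref{lemma:weightsum} gives
\[
\sum_{i=1}^n \frac{\omega(v_i)}{\binom{n+1}{2}}\,T \; = \; T.
\]

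For the off-diagonal part, a pair $(i,j)$ contributes only when $d_i=2$ and $d_j=1$, and the resulting tree obtained by swapping the two decorations appears once in $\dCE H(T)$ and once in $H\dCE(T)$ with the same combinatorial weight $\omega(v_i)/\binom{n+1}{2}$. A short sign bookkeeping, split into the two cases $i<j$ and $i>j$, shows that applying one operator first shifts the partial degree sum $d_1+\cdots+d_{k-1}$ seen by the second operator by $\pm1$, but only when the second vertex lies strictly to the right of the first; consequently in each case exactly one of the two orderings picks up an extra sign, and the two contributions cancel. The off-diagonal sum thus vanishes identically. Finally, in arity one the only underlying tree is the trivial tree, which has no vertices, so $\dCE$ and $H$ vanish by their very definition. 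The main bookkeeping hurdle is the off-diagonal cancellation, since the degrees $\pm1$ of $\psi$ and $h$ and the shift of partial degree sums caused by one operator have to be tracked across the action of the other; keeping the two orderings $i<j$ and $i>j$ separate is what makes the cancellation transparent.
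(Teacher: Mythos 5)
Your proposal is correct and follows essentially the same route as the paper's proof: the same split into diagonal terms (same vertex, summing to the identity via Lemma~\ref{lemma:weightsum}) and off-diagonal terms (distinct vertices, cancelling in pairs between $\dCE H$ and $H\dCE$ by a sign argument), plus the trivial arity-one check. The only quibble is a harmless indexing slip in which of $d_i,d_j$ equals $2$ for the $\dCE H$ versus $H\dCE$ terms; the substance of the cancellation is unaffected.
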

\begin{proof}
First, applied to the coidentity subspace of $\cofree(M)$, the degree zero part of $\cofree(M)(1)$, this sum is clearly zero.  

Next, consider a tree in $\cofree(M)$ with at least one vertex.  The map $H$ acts on it by taking the signed and weighted sum of replacing each $\beta$ with a $\mu$; the coderivation $\dCE$ acts by taking the signed sum over all the $\mu$ and  replacing it with a $\beta$.  To act first with one and then with the other means that either \begin{enumerate}\item The maps $H$ and $\dCE$ act on two distinct vertices of the tree, or
\item they act on the same vertex, changing it first from $\beta$ to $\mu$ or vice versa and then back, ending up with the same tree, with a combinatorial factor.
\end{enumerate}
The first type come in pairs, one from $\dCE H$ and one from $H\dCE$, with the same combinatorial factors.  They have the opposite sign, because the sign conventions for $H$ and $\dCE$ are the same, and in one of the cases, there is one more or fewer $\mu$ than the other in a position that induces a sign.  This means all of these terms cancel.  
	
For the second type, note first of all that the induced signs from $H$ and $\dCE$ will be the same sign acting on whichever vertex we have chosen, and every vertex will be acted on by precisely one of $H\dCE$ and $\dCE H$ nontrivially, depending on whether it begins decorated by $\beta$ or $\mu$, so the final result of acting in this way on every vertex will be the sum over all vertices of the underlying tree $T$:
	\begin{equation*}
	\sum_{v}\frac{\omega(v)}{\binom{n+1}{2}} T=T.
	\end{equation*}
	By Lemma~\ref{lemma:weightsum}, the sum of the coefficients over all the vertices is exactly one, which yields the desired result.
\end{proof}
\subsection{Characterizing the Koszul dual of the Gerstenhaber operad}
Consider the operad $\Ge$ governing Gerstenhaber algebras.  This operad has a presentation as $\free(\susp^{-1}M)/(R)$, where $R$ is a set of quadratic relations in $\bullet=\susp^{-1}\mu$ and $\la \; , \, \ra=\susp^{-1}\beta$.

The Koszul dual cooperad (See \cite[Section~$7.3$]{LodayVallette10}) $\Ge^\antishriek$ is a graded sub $\sS$-module of $\cofree(M)\subset \cofree(\susp\overline{\Ge})$, characterized by being the intersection of $\cofree(M)$ with the kernel of the degree $-1$ coderivation $d_2$ on $\B\Ge:=\cofree(\susp\overline{\Ge})$ induced by the infinitesimal composition map $\gamma_{(1)}:\cofreen{2}{\Ge}\to \Ge$.  

Applying $d_2$ to a decorated tree in $\cofree(M)$ gives a sum of trees, each of which has one special $4$-valent  vertex decorated by an element of $\susp \Ge^{(2)}$ obtained by contraction of one edge and composition of the associated two operations. The rest of the vertices are trivalent and decorated with an element of $\susp\Ge^{(1)}=M$.  Because $M$ is one dimensional in each degree, we can specify that each trivalent vertex is decorated by either $\mu$ or $\beta$, with an overall coefficient on the decoration of the special vertex. Then in order that two separate terms be in the same summand of $\cofree(\susp \overline{\Ge})$ so that they might cancel, the underlying trees must be the same and the decorations on each trivalent vertex must be the same.

\begin{defi}A {\em contraction tree} is a tree with one undecorated $4$-valent vertex and all other vertices trivalent and decorated by either $\mu$ or $\beta$. 
\end{defi}

Note that we can induce a fixed order on the leaves of the special $4$-valent vertex of a contraction tree by using the order on the leaves of the whole tree; order the leaves of the special vertex by the smallest number of a tree leaf above each one, see~\cite[Section~$8.2$]{LodayVallette10}

A sum of decorated trees $\sum c_T T\in\cofree(M)$ is in the kernel of $d_2$ if and only if once we sum over all possible edge contractions, any summands that have the same underlying contraction tree cancel with each other.

Therefore it is important to know which decorated binary trees $T$ can have the same contraction tree.  There are precisely three underlying trees that can give rise to a given contraction trees by an edge contraction, corresponding to the three distinct binary trees with two vertices:
  
\[  
\psset{unit=2cm}
\begin{pspicture}(-1,-.5)(4,2)
\psline(0,0)(0,.5)
\psline(0,.5)(-.5,1.5)
\psline(0,.5)(.5,1.5)
\psline(-.25,1)(0,1.5)
\uput[u](-.5,1.5){1}
\uput[u](0,1.5){2}
\uput[u](.5,1.5){3}
\rput(-.25,1){\pscirclebox[fillstyle=solid]{1}}
\rput(0,.5){\pscirclebox[fillstyle=solid]{2}}

\psline(1.5,0)(1.5,.5)
\psline(1.5,.5)(1,1.5)
\psline(1.5,.5)(2,1.5)
\psline(1.25,1)(1.5,1.5)
\uput[u](1,1.5){1}
\uput[u](1.5,1.5){3}
\uput[u](2,1.5){2}
\rput(1.25,1){\pscirclebox[fillstyle=solid]{1}}
\rput(1.5,.5){\pscirclebox[fillstyle=solid]{2}}

\psline(3,0)(3,.5)
\psline(3,.5)(2.5,1.5)
\psline(3,.5)(3.5,1.5)
\psline(3.25,1)(3,1.5)
\uput[u](2.5,1.5){1}
\uput[u](3,1.5){2}
\uput[u](3.5,1.5){3}
\rput(3.25,1){\pscirclebox[fillstyle=solid]{2}}
\rput(3,.5){\pscirclebox[fillstyle=solid]{$1$}}
\end{pspicture}
\] 

 \begin{defi}
There are twelve distinct two-vertex binary trees with vertices decorated by $\mu$ and/or $\beta$, which form a $\KK$-linear basis for the twelve dimensional space $\cofreen{2}{M}$.   We will refer to these basis trees with the notation $\bt{a}{b}{i}$ where $a$ and $b$ are each one of the symbols $\mu$ and $\beta$ and $i$ is one of $1$, $2$, and $3$.  The numbers correspond, respectively, to the trees pictured above, while $a$ and $b$ are decorations of the two vertices, following the vertex order convention established in subsection~\ref{subsec:Trees}.  If $S$ is a contraction tree, then $S(\bt{a}{b}{i})$ is the decorated binary tree obtained by replacing the $4$-valent vertex with $\bt{a}{b}{i}$ and $S[\bt{a}{b}{i}]$ is the decorated tree obtained by decorating the $4$-valent vertex with $d_2 (\bt{a}{b}{i})$.
\end{defi}

Up to scale, there are precisely twelve decorated trees that can contract to yield the contraction tree $S$  if each vertex is given either $\mu$ or $\beta$ as a decoration; these are the trees $S(\bt{a}{b}{i})$.  These twelve are clearly in correspondence with the set $\{\bt{a}{b}{i}\}$.  We have shown:
\begin{lemma}The following are equivalent:
\begin{enumerate}
\item The element $\sum c_T T$ is in $\Ge^\antishriek$.
\item For each contraction tree $S$, $\displaystyle\sum c_{S(\bt{a}{b}{i})} S[\bt{a}{b}{i}]=0$ where the sum runs over the twelve decorated trees $S(\bt{a}{b}{i})$ that can yield $S$ as a contraction.
\item For each contraction tree $S$, $\displaystyle\sum c_{S(\bt{a}{b}{i})} d_2(\bt{a}{b}{i})=0$ where the sum runs over the twelve decorated trees $S(\bt{a}{b}{i})$ that can yield $S$ as a contraction.
\end{enumerate}
\end{lemma}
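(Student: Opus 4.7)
The plan is to unpack the characterization $\Ge^\antishriek = \cofree(M) \cap \ker d_2$ established just before the lemma, and rewrite the single global condition $d_2(\sum c_T T) = 0$ as a family of purely local conditions, one per contraction tree.

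First, I would analyze the action of $d_2$ on one decorated binary tree $T \in \cofree(M)$. By Lemma~\ref{lemma:cofreecoder}(2), $d_2(T)$ is a signed sum over the internal edges of $T$; each summand is the corresponding edge contraction, with the new $4$-valent vertex decorated by the infinitesimal composition of the two involved operations and with the trivalent vertices still decorated by $\mu$ or $\beta$. In particular, every such summand lies in the direct summand of $\cofree(\susp\overline{\Ge})$ indexed by a specific decorated contraction tree.

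Second, I would use the direct sum decomposition of $\cofree(\susp\overline{\Ge})$ over contraction trees to conclude that $d_2(\sum c_T T) = 0$ if and only if, for each contraction tree $S$, the contributions of $d_2(\sum c_T T)$ landing over $S$ cancel. By the discussion immediately preceding the lemma, the decorated binary trees $T$ admitting an edge contraction producing $S$ (with the prescribed trivalent decorations) are precisely the twelve trees $S(\bt{a}{b}{i})$, corresponding to the three possible two-vertex binary shapes at the special vertex and the four possible $\mu/\beta$ decorations of those two vertices. Collecting gives the contribution $\sum c_{S(\bt{a}{b}{i})} S[\bt{a}{b}{i}]$ at $S$, which establishes (1) $\iff$ (2).

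Finally, (2) $\iff$ (3) is a rewriting. The twelve elements $S[\bt{a}{b}{i}]$ share the underlying tree $S$ and have identical decorations on the trivalent vertices, differing only at the special vertex, whose decoration is $d_2(\bt{a}{b}{i}) \in \susp\Ge^{(2)}$. Hence the linear combination $\sum c_{S(\bt{a}{b}{i})} S[\bt{a}{b}{i}]$ vanishes inside the corresponding summand of $\cofree(\susp\overline{\Ge})$ if and only if the induced linear combination $\sum c_{S(\bt{a}{b}{i})} d_2(\bt{a}{b}{i})$ vanishes in $\susp\Ge^{(2)}$, by injectivity of the map $X \mapsto S[X]$.

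The main technical point, though essentially routine, is the consistency of Koszul signs: the sign produced when $d_2$ contracts the distinguished internal edge of $S(\bt{a}{b}{i})$ must match the sign of the corresponding term in $d_2(\bt{a}{b}{i})$ computed intrinsically in $\cofreen{2}{M}$. This follows from $d_2$ being the unique coderivation extending $\gamma_{(1)}$, combined with the compatibility of the leaf-ordering convention from Subsection~\ref{subsec:Trees} applied both locally at the $4$-valent vertex and globally on the tree $S(\bt{a}{b}{i})$.
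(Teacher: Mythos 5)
Your proposal is correct and follows essentially the same route as the paper, whose ``proof'' is precisely the discussion preceding the lemma: decompose $d_2(\sum c_T T)$ in $\cofree(\susp\overline{\Ge})$ according to the underlying contraction tree together with its trivalent decorations, observe that exactly the twelve trees $S(\bt{a}{b}{i})$ contribute over a given $S$, and note that $(2)\Leftrightarrow(3)$ is just the identification of the summand over $S$ with $\susp\Ge^{(2)}$ via $X\mapsto S[X]$. Your added remarks on the Koszul-sign consistency and the injectivity of $X\mapsto S[X]$ make explicit what the paper leaves implicit, but do not change the argument.
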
 
In words, a sum of decorated trees can only be in $\Ge^\antishriek$ if there is local cancellation for every possible contraction tree.  Global cancellation is sufficient but local cancellation is necessary (this means that they are equivalent, but it will be easier to use local cancellation to check global cancellation in the sequel).
\subsection{Restricting the homotopy to $\Ge^\antishriek$}
\begin{lemma}\label{lemma:HforGe}
	The homotopy $H:\cofree(M)\to \cofree(M)$ restricts to $\Ge^\antishriek$.
\end{lemma}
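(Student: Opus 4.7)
My plan is to invoke the local cancellation criterion of the preceding lemma: to show $H(x) \in \Ge^\antishriek$ for $x = \sum c_T T \in \Ge^\antishriek$, it is enough to verify, for each contraction tree $S'$, the identity
\[
\sum_{a,b,i} c^{H(x)}_{S'(\bt{a}{b}{i})}\, d_2(\bt{a}{b}{i}) = 0
\]
in $\susp \Ge^{(2)}$. Writing $c^{H(x)}_{S'(\bt{a}{b}{i})}$ as a weighted sum over the $\mu$-decorated vertices $v$ of $S'(\bt{a}{b}{i})$, I split each such sum according to whether $v$ is an \emph{outer} vertex (coming from $S'$) or one of the two \emph{inner} vertices of the binary subtree $\bt{a}{b}{i}$ that expands the $4$-valent vertex of $S'$. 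A useful preliminary observation is that the weight $\omega(v)$ of an outer vertex and the overall normalization $N$ depend only on $S'$, not on the choice $(a,b,i)$: the leaves above any outer vertex and the total vertex count are determined by $S'$ alone.

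For each outer vertex $v$ with decoration $\mu$ in $S'$, let $\tilde S'$ denote the contraction tree obtained from $S'$ by flipping the decoration at $v$ to $\beta$. Then the outer contribution at $v$ to the cancellation identity factors as $\pm \tfrac{\omega(v)}{N} \sum_{a,b,i} c_{\tilde S'(\bt{a}{b}{i})}\, d_2(\bt{a}{b}{i})$, which is exactly the local cancellation identity for $x$ at $\tilde S'$. Since $x \in \Ge^\antishriek$, each such inner sum vanishes, so the total outer contribution is zero.

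The inner contribution reorganizes, up to an overall factor, as $d_2\!\bigl(H_2(y)\bigr)$, where $y := \sum_{a,b,i} c_{S'(\bt{a}{b}{i})}\, \bt{a}{b}{i} \in \cofreen{2}{M}$ and $H_2$ denotes $H$ acting in weight two. Local cancellation of $x$ at $S'$ says precisely that $d_2(y) = 0$, i.e.\ $y \in \Ge^{\antishriek,(2)}$, so the desired vanishing reduces to the weight-two case of the lemma itself: $H_2$ should preserve $\Ge^{\antishriek,(2)}$. I expect this finite base case to be the main obstacle. The space $\Ge^{\antishriek,(2)}$ is finite-dimensional, and one checks directly---using the three binary shapes $i = 1,2,3$, the explicit weights $\omega_i(v_1)$, $\omega_i(v_2)$, and the Gerstenhaber relations (associativity of $\mu$, Jacobi for $\beta$, and the Leibniz relations mixing $\mu$ and $\beta$)---that $d_2 \circ H_2$ annihilates every element of a chosen basis. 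Once this weight-two identity is confirmed, the general case follows from the reduction above.
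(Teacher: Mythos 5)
Your proposal follows essentially the same route as the paper's proof: both arguments verify $d_2(H(x))=0$ by splitting the sum over (vertex where $h$ acts, edge being contracted) into the non‑incident (outer) terms, which vanish by the local cancellation of $x$ at the contraction tree with the relevant decoration flipped, and the incident (inner) terms, which reduce to a finite verification in the twelve‑dimensional space $\cofreen{2}{M}$. The only point needing care is that the inner weights are $n_1(n_2+n_3)$, $n_2n_3$, etc., for \emph{arbitrary} leaf‑counts $n_1,n_2,n_3$ read off from the ambient tree, so the base case is not literally ``$H_2$ preserves ${\Ge^\antishriek}^{(2)}$'' (the case $n_1=n_2=n_3=1$) but the stronger statement that $\Ker d_2\subset\cofreen{2}{M}$ is stable under the weighted application of $h$ for every choice of $(n_1,n_2,n_3)$ --- which is exactly the explicit computation on the shifted Gerstenhaber relations that the paper carries out.
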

\begin{proof}
Let $\sum c_T T$ be a sum of decorated trees in $\Ge^\antishriek$.  Then for every contraction tree $S$, the sum over the twelve basis elements
$\sum c_{S(\bt{a}{b}{i})}d_2 (\bt{a}{b}{i})$ is zero.

Let us consider applying $H$ to $\sum c_T T$.  By definition, this is a sum over every vertex of the decorated tree $T$.  To show that the resultant sum is in $\Ge^\antishriek$, we then apply $d_2$ and demonstrate that we get zero.  Applying $d_2$ involves applying the desuspension of the infinitesimal composition map $\gamma_{(1)}$ on each set of two adjacent vertices, summing over all such pairs.  We will confuse such subsets with internal edges, with which they are in bijection, as described in Section~\ref{subsec:codercofree}.

In total, to apply $H$ and then $d_2$ to a decorated tree $T$ involves summing over all choices of a vertex and edge of $T$; each individual summand is the application of first a weighted multiple of $h$ to the chosen vertex, and then infinitesimal composition $\gamma_{(1)}$ to the chosen edge.

This sum splits into those pairs of vertex and edge which are distinct, and those pairs where the chosen edge is incident on the chosen vertex.  We will show that each of these two constituent sums is zero individually.

If the vertex and edge are distinct, then, up to sign, the application of $h$ on the vertex and infinitesimal composition on the edge commute.  For a given contraction tree and choice of trivalent vertex on the contraction tree, the overall sign of commuting the shifted infinitesimal composition on the edge of one of the twelve decorated trees which yields the given contraction tree and $h$ on the corresponding vertex will be independent of the particular choice of decorated tree within the twelve.  
Let $S^v[\bt{a}{b}{i}]$ be obtained from $S[\bt{a}{b}{i}]$ by applying the appropriate weighted multiple of $h$ to the vertex $v$.  
Since the sum $\sum c_{S(\bt{a}{b}{i})}S[\bt{a}{b}{i}]$ over the twelve corresponding decorated trees without any application of $h$ is zero, for each choice of vertex, the sum $\sum c_{S(\bt{a}{b}{i})}S^v[\bt{a}{b}{i}]$ is also zero.

The other case to consider is when the edge involved in the contraction is incident on the vertex where $h$ is applied.  Let us fix a contraction tree $S$; because the original sum is in $\Ge^\antishriek$, it is true that $\sum c_{S(\bt{a}{b}{i})}S[\bt{a}{b}{i}]$ is zero, or, equivalently, $\sum c_{S(\bt{a}{b}{i})}d_2(\bt{a}{b}{i})$ is zero.  We will replace $d_2(\bt{a}{b}{i})$ with the weighted sum of applying $h$ to the top and bottom vertex of $\bt{a}{b}{i}$, followed by $d_2$, and show that the result is still zero. 

The two-vertex component of the Koszul dual cooperad to a quadratic operad is isomorphic to the space of relations, up to a degree shift. In this case, we have:
\begin{lemma}\label{lemma:Ge2}
The kernel of $d_2$ on the linear span of $\bt{a}{b}{i}$ is six dimensional, spanned by the shifted Gerstenhaber relations:
\begin{enumerate}
\item the two dimensional space of associativity relations $\bt{\mu}{\mu}{i}-\bt{\mu}{\mu}{j}$,
\item the three dimensional space of Leibniz relations spanned by 
\begin{gather*}
L_1=\bt{\mu}{\beta}{1}+\bt{\beta}{\mu}{2}+\bt{\mu}{\beta}{3},\\
L_2=\bt{\beta}{\mu}{1}+\bt{\mu}{\beta}{2}+\bt{\mu}{\beta}{3},\\
\text{and}\\
L_3=\bt{\beta}{\mu}{1}+\bt{\beta}{\mu}{2}+\bt{\beta}{\mu}{3}
\end{gather*}
(note that the signs are different than in the usual Leibniz relation because of the shift, and that the presentation is not symmetric in our basis), and
\item the one-dimensional space of the Jacobi relation $\bt{\beta}{\beta}{1}+\bt{\beta}{\beta}{2}+\bt{\beta}{\beta}{3}$.
\end{enumerate}
\end{lemma}

For the weighting of $h$, it is necessary to look at the shape and decorations of the contraction tree $S$.  Choose a representative so that the two vertices involved in the contraction edge are adjacent in the total ordering of vertices.  Let the number of leaves above the edge $i$ of the contraction vertex be $n_i$.  

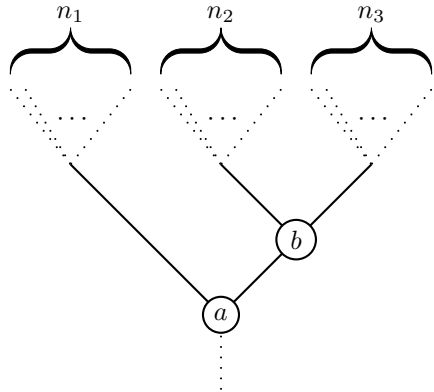
\begin{figure}[h]
\centering
\begin{equation*}
\psset{unit=2cm}
\begin{pspicture}(1.1,-.5)(4.9,3)
\psline[linestyle=dotted](3,0)(3,.5)
\psline(3,.5)(2,1.5)
\psline(3,.5)(4,1.5)
\psline(3.5,1)(3,1.5)
\psline[linestyle=dotted](2,1.5)(2.4,2)
\psline[linestyle=dotted](2,1.5)(1.6,2)
\psline[linestyle=dotted](2,1.5)(1.7,2)
\rput(2.025,1.8){$\cdots$}
\psbrace[rot=-90,ref=t,nodesepB=10pt](2.4,2.1)(1.6,2.1){}
\rput(2,2.5){$n_1$}
\psline[linestyle=dotted](3,1.5)(3.4,2)
\psline[linestyle=dotted](3,1.5)(2.6,2)
\psline[linestyle=dotted](3,1.5)(2.7,2)
\rput(3.025,1.8){$\cdots$}
\psbrace[rot=-90,ref=t,nodesepB=10pt](3.4,2.1)(2.6,2.1){}
\rput(3,2.5){$n_2$}
\psline[linestyle=dotted](4,1.5)(4.4,2)
\psline[linestyle=dotted](4,1.5)(3.6,2)
\psline[linestyle=dotted](4,1.5)(3.7,2)
\rput(4.025,1.8){$\cdots$}
\psbrace[rot=-90,ref=t,nodesepB=10pt](4.4,2.1)(3.6,2.1){}
\rput(4,2.5){$n_3$}
\rput(3.5,1){\pscirclebox[fillstyle=solid]{$b$}}
\rput(3,.5){\pscirclebox[fillstyle=solid]{$a$}}
\end{pspicture}
\end{equation*}\caption{The top part of $S(\bt{a}{b}{3})$}
\end{figure}
Then up to an overall sign and overall factor of $\frac{1}{\binom{n+1}{2}}$, the weighted sum of applying $h$ to both vertices of each of the basis elements is given by:
\begin{enumerate}
\item 
$\bt{\mu}{\mu}{i}\mapsto 0$,
\item 
$\bt{\mu}{\beta}{1}\mapsto n_3(n_1+n_2) \bt{\mu}{\mu}{1}$,\\
$\bt{\mu}{\beta}{2}\mapsto n_2(n_1+n_3) \bt{\mu}{\mu}{2}$,\\
$\bt{\beta}{\mu}{3}\mapsto n_1(n_2+n_3) \bt{\mu}{\mu}{3}$, 
\item 
$\bt{\beta}{\mu}{1}\mapsto -n_1n_2 \bt{\mu}{\mu}{1}$,\\
$\bt{\beta}{\mu}{2}\mapsto -n_1n_3 \bt{\mu}{\mu}{2}$,\\
$\bt{\mu}{\beta}{3}\mapsto -n_2n_3 \bt{\mu}{\mu}{3}$,\\
\item 
$\bt{\beta}{\beta}{1}\mapsto n_3(n_1+n_2) \bt{\beta}{\mu}{1}+n_1n_2\bt{\mu}{\beta}{1}$,\\
$\bt{\beta}{\beta}{2}\mapsto n_2(n_1+n_3) \bt{\beta}{\mu}{2}+n_1n_3\bt{\mu}{\beta}{2}$, and\\
$\bt{\beta}{\beta}{3}\mapsto n_1(n_2+n_3) \bt{\mu}{\beta}{3}+n_2n_3\bt{\beta}{\mu}{3}$.
\end{enumerate}
Applying these formulae to the kernel described above gives:
\begin{enumerate}
\item $\bt{\mu}{\mu}{i}-\bt{\mu}{\mu}{j}\mapsto 0$,
\item \begin{gather*}
L_1\mapsto
n_2n_3(\bt{\mu}{\mu}{1}-\bt{\mu}{\mu}{3})+n_1n_3(\bt{\mu}{\mu}{1}-\bt{\mu}{\mu}{2})
\\
L_2\mapsto
n_1n_2(\bt{\mu}{\mu}{2}-\bt{\mu}{\mu}{1})+n_2n_3(\bt{\mu}{\mu}{2}-\bt{\mu}{\mu}{3})
\\
L_3\mapsto
n_1n_2(\bt{\mu}{\mu}{3}-\bt{\mu}{\mu}{1})+n_1n_3(\bt{\mu}{\mu}{3}-\bt{\mu}{\mu}{2})
,\end{gather*} 
\item and: \begin{eqnarray*}
\bt{\beta}{\beta}{1}+\bt{\beta}{\beta}{2}+\bt{\beta}{\beta}{3}&\mapsto&\\&&
n_1n_2(\bt{\mu}{\beta}{1}+\bt{\beta}{\mu}{2}+\bt{\mu}{\beta}{3})\\&+&
n_1n_3(\bt{\beta}{\mu}{1}+\bt{\mu}{\beta}{2}+\bt{\mu}{\beta}{3})\\&+&
n_2n_3(\bt{\beta}{\mu}{1}+\bt{\beta}{\mu}{2}+\bt{\beta}{\mu}{3}).
\end{eqnarray*}
\end{enumerate}
So the kernel of $d_2$ in this twelve dimensional space is stable under the weighted application of $h$, no matter the particular trees that define the weights.  This means that for each contraction tree $S$, the sum obtained from $\sum c_{S(\bt{a}{b}{i})}d_2(\bt{a}{b}{i})$ by replacing $d_2(\bt{a}{b}{i})$ with the weighted sum of applying $h$ to the top and bottom vertex of $\bt{a}{b}{i}$, followed by $d_2$, is still zero, as desired.
\end{proof}

\subsection{Proof of Theorem~\ref{thm:MainDefRetract}}
\begin{lemma}\label{lemma:coderivation}
Let $\Oo=\free(N)/(R)$ be a quadratic operad with Koszul dual cooperad $\Oo^\antishriek\subset \cofree(sN)$.  Let $d$ be a coderivation of $\cofree(sN)$.  If the composition
\[
  \xymatrix{\Oo^\antishriek\ar@{>->}[r]& \cofree(sN)\ar[r]^-d& \cofree(sN)\ar@{-{>>}}[r]&\cofreen{2}{sN}/{\Oo^\antishriek}^{(2)}}\] is zero, then $d$ restricts to be a coderivation of $\Oo^\antishriek$.
\end{lemma}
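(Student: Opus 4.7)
The plan is to consider the sub-$\sS$-module $\Cc := \Oo^\antishriek + d(\Oo^\antishriek)$ of $\cofree(sN)$ and to show that $\Cc$ is itself a sub-cooperad of $\cofree(sN)$ whose weight-two component is contained in $s^2 R = {\Oo^\antishriek}^{(2)}$. Since by construction $\Oo^\antishriek$ is the maximal sub-cooperad of $\cofree(sN)$ with this weight-two property (see \cite[Section~$7.1$]{LodayVallette10}), this will force $\Cc \subseteq \Oo^\antishriek$, and in particular $d(\Oo^\antishriek) \subseteq \Oo^\antishriek$, which is exactly the required restriction.

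The weight-two condition is the easy half and is the only place where the hypothesis of the lemma enters: one has $\Cc^{(2)} \subseteq {\Oo^\antishriek}^{(2)} + (d(\Oo^\antishriek))^{(2)}$, the first summand is $s^2 R$ by definition, and the second sits inside ${\Oo^\antishriek}^{(2)}$ precisely because the composition $\Oo^\antishriek \hookrightarrow \cofree(sN) \xrightarrow{d} \cofree(sN) \twoheadrightarrow \cofreen{2}{sN}/{\Oo^\antishriek}^{(2)}$ is assumed to vanish.

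To check that $\Cc$ is stable under the cooperadic decomposition $\Delta\colon \cofree(sN) \to \cofree(sN) \circ \cofree(sN)$, the inclusion $\Delta(\Oo^\antishriek) \subseteq \Oo^\antishriek \circ \Oo^\antishriek \subseteq \Cc \circ \Cc$ is immediate since $\Oo^\antishriek$ is already a sub-cooperad. For elements of the form $d(x)$ with $x \in \Oo^\antishriek$, I invoke the co-Leibniz identity for a cooperad coderivation \cite[Section~$6.3.14$]{LodayVallette10},
\begin{equation*}
\Delta \circ d = (d \circ \id + \id \circ' d) \circ \Delta,
\end{equation*}
where $\id \circ' d$ denotes the one-slot infinitesimal extension acting on the top factors. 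Since $\Delta(x) \in \Oo^\antishriek \circ \Oo^\antishriek$, the first summand sends it into $d(\Oo^\antishriek) \circ \Oo^\antishriek \subseteq \Cc \circ \Cc$, while the second summand is a sum of elements of $\Oo^\antishriek$ composed with tuples whose entries all lie in $\Oo^\antishriek \cup d(\Oo^\antishriek) \subseteq \Cc$. In both cases every factor sits in $\Cc$, so $\Delta(d(x)) \in \Cc \circ \Cc$. The only slightly delicate bookkeeping is the one-slot extension $\id \circ' d$, which spreads $d$ across the various top entries of each cotuple; this is routine once spelled out, and the argument then closes by invoking the maximality property of $\Oo^\antishriek$.
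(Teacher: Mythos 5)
Your argument is correct, but it is not the route the paper takes: the paper disposes of this lemma in one line, declaring it dual to the standard fact that a derivation $d$ of the free operad $\free(N)$ descends to the quotient $\free(N)/(R)$ as soon as the composite $R\rightarrowtail\free(N)\xrightarrow{d}\free(N)\twoheadrightarrow\free(N)/(R)$ vanishes. What you have done is carry out that dualization explicitly on the cooperad side: your candidate $\Oo^\antishriek+d(\Oo^\antishriek)$ is the counterpart of the ideal generated by $R\cup d(R)$, the co-Leibniz identity for the coderivation replaces the Leibniz identity for the derivation, and the terminal universal property of the quadratic cooperad $\Cc(sN,s^2R)$ (maximal among sub-cooperads of $\cofree(sN)$ whose weight-two projection lands in $s^2R$) replaces the initial universal property of the quotient operad. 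Each version has its merits: the paper's is shorter but asks the reader to perform the dualization; yours is self-contained and isolates exactly where the hypothesis enters (only in the weight-two projection) and where the coderivation axiom is used (only in the stability of $\Oo^\antishriek+d(\Oo^\antishriek)$ under the decomposition map). Two small points of hygiene. First, $\Oo^\antishriek+d(\Oo^\antishriek)$ need not be homogeneous for the weight grading, so your ``$(d(\Oo^\antishriek))^{(2)}$'' should be read as the image of $d(\Oo^\antishriek)$ under the projection $\cofree(sN)\twoheadrightarrow\cofreen{2}{sN}$; this is precisely the form in which both the hypothesis and the universal property are stated, so nothing is lost. Second, the maximality you invoke is indeed the universal property of $\Cc(sN,s^2R)$ from Loday--Vallette, Section~7.1, even though Section~1 of the present paper informally describes the Koszul dual cooperad as the ``smallest'' sub-cooperad containing the corelators; the two characterizations agree here, but it is the terminal one that your argument needs.
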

\begin{proof}
Since the Koszul dual cooperead $\Oo^\antishriek=\Cc(sN,s^2 R)$ is a quadratic cooperad, this proof is dual to the proof that a derivation of the free operad $\free(N)$ passes to the quotient $\free(N)/(R)$, with $R\subset \free(N)^{(2)}$, if the composite 
\[
  \xymatrix{R\ar@{>->}[r]&\free(N)\ar[r]^d&\free(N)\ar@{-{>>}}[r]&\free(N)/(R)}\]
  is zero.
\end{proof}
\begin{cor}\label{cor:coderrestricts}
The coderivation $d_\psi$ defined on $\cofree(M)$ restricts to $\Ge^\antishriek$.  We will refer to the restriction with the same notation.
\end{cor}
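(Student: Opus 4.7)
The plan is to apply Lemma~\ref{lemma:coderivation} with the quadratic operad $\Oo=\Ge$. Since the cogenerator map $\psi$ factors through the projection $\cofree(M)\twoheadrightarrow\cofreen{1}{M}=M$, the first of the two examples in Subsection~\ref{subsec:codercofree} shows that $d_\psi$ acts on a decorated tree by a signed sum over vertices, simply replacing the decoration at the chosen vertex by its $\psi$-image. In particular $d_\psi$ preserves the weight (number-of-vertices) grading on $\cofree(M)$. The composite
\[
\Ge^{\antishriek}\hookrightarrow\cofree(M)\xrightarrow{d_\psi}\cofree(M)\twoheadrightarrow\cofreen{2}{M}/\Ge^{\antishriek,(2)}
\]
appearing in Lemma~\ref{lemma:coderivation} therefore automatically vanishes on $\Ge^{\antishriek,(n)}$ for every $n\neq 2$, and the statement reduces to the single inclusion $d_\psi\bigl(\Ge^{\antishriek,(2)}\bigr)\subset \Ge^{\antishriek,(2)}$.

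This last point is a finite linear-algebraic check. By Lemma~\ref{lemma:Ge2}, the six-dimensional space $\Ge^{\antishriek,(2)}$ is spanned by the two associativity relations $\bt{\mu}{\mu}{i}-\bt{\mu}{\mu}{j}$, the three shifted Leibniz relations $L_1,L_2,L_3$, and the single Jacobi relation $\bt{\beta}{\beta}{1}+\bt{\beta}{\beta}{2}+\bt{\beta}{\beta}{3}$. Since $\psi(\mu)=\beta$ and $\psi(\beta)=0$, on each two-vertex basis element $\bt{a}{b}{i}$ the coderivation $d_\psi$ is the signed sum of the (at most two) decorated trees obtained by replacing one of the two letters by its $\psi$-image. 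The plan is to run through the six generators and verify directly that each associativity relation is sent into the span of the $L_i$, that each $L_i$ is sent to a scalar multiple of the Jacobi relation, and that the Jacobi relation itself is sent to zero (the last being automatic, since every decoration is already $\beta$ and $\psi(\beta)=0$).

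The main obstacle is purely book-keeping: one must carefully track the Koszul signs dictated by the fixed ordering convention for the basis trees $\bt{a}{b}{i}$ set up in Subsection~\ref{subsec:Trees}, and check that the resulting coefficients reassemble into genuine elements of $\Ge^{\antishriek,(2)}$ rather than arbitrary combinations of the twelve basis trees. Structurally, these identities are the two-vertex incarnation of the compatibilities between $\bullet$, $\langle\,,\,\rangle$ and the $\BV$-axiom $\langle -,-\rangle=\Delta(-\bullet-)-\Delta(-)\bullet-\,-\,-\bullet\Delta(-)$ used in \cite{GCTV09} to define the map $\varphi$; here the rôle of $\Delta$ is played by $\psi$ itself, and the stability of the Koszul dual of $\Ge$ under this substitution is exactly the obstruction-to-derivation mechanism, now packaged weightwise. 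Once the sign verification is complete, Lemma~\ref{lemma:coderivation} produces the desired restriction of $d_\psi$ to $\Ge^{\antishriek}$, which we continue to denote by the same symbol.
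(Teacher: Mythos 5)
Your proposal is correct and follows essentially the same route as the paper: reduce via Lemma~\ref{lemma:coderivation} to the weight-$2$ component (using that $d_\psi$ preserves the number of vertices), then check on the six-dimensional space of Lemma~\ref{lemma:Ge2} that associativity relations map into the span of the Leibniz relations, the Leibniz relations map to the Jacobi relation, and the Jacobi relation maps to zero. The paper likewise leaves this final finite check as "a direct calculation," so nothing is missing relative to its own level of detail.
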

\begin{proof}
In order to check this, we need check only that elements of $\Ge^\antishriek$ which $d_\psi$ takes into $\cofreen{2}{M}$ land in ${\Ge^\antishriek}^{(2)}$.  For degree reasons, such elements must belong to ${\Ge^\antishriek}^{(2)}$, which is described by Lemma~\ref{lemma:Ge2}.  A direct calculation verifies that $d_\psi$ takes an associativity relation to a difference of two Leibniz relations, takes each Leibniz relation to the Jacobi relation, and takes the Jacobi relation to zero.
\end{proof}
\begin{prop}\label{prop:contractible}
The counit map $(\Ge^\antishriek,\dCE)\to (\I,0)$ of the differential graded cooperad $(\Ge^\antishriek,\dCE)$, the coaugmentation $(\I,0)\to (\Ge^\antishriek,\dCE)$, and the homotopy $H$ form the following deformation retract:
\[
\xymatrix{
  *{ \quad \ \  \quad (\Ge^\antishriek, \dCE)\ } 
  \ar@(dl,ul)[]^{H}\ 
  \ar@<0.5ex>[r]& 
  (\I, 0)  
  \ar@<0.5ex>[l]
  }
 \]
\end{prop}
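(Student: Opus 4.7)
The plan is to assemble the pieces that have been built up throughout the section; the proposition asserts essentially three things, each of which should follow almost directly from a preceding result. I need to verify that $i$ and $p$ are chain maps, that $p \circ i = \id_\I$, and that $\dCE H + H \dCE = \id - i\circ p$ on $\Ge^\antishriek$.

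First, I would identify the arity-$1$ component of $\Ge^\antishriek$. Since $M$ is concentrated in arities $\geq 2$, the only element of $\cofree(M)$ in arity $1$ is the coidentity (the trivial tree in $\cofree^{(0)}(M)$), and this element automatically lies in $\Ge^\antishriek$. In particular the arity-$1$ part of $\Ge^\antishriek$ is exactly $\I$, which makes $p \circ i = \id_\I$ immediate. To see that $i$ and $p$ are chain maps, I note that $\dCE$ preserves the weight grading by number of vertices (since $\psi$ does) and vanishes on the weight-zero coidentity because there is no subtree for $\psi$ to act on; therefore $p \circ \dCE = 0 = 0 \circ p$ and similarly for $i$.

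Next, for the homotopy identity, I invoke Lemma~\ref{lemma:HforGe} (which says that $H$ restricts to an endomorphism of $\Ge^\antishriek$) together with Corollary~\ref{cor:coderrestricts} (which says that $\dCE$ restricts to $\Ge^\antishriek$). Together these guarantee that $\dCE H + H \dCE$ is a well-defined endomorphism of $\Ge^\antishriek$, and moreover that for any $x \in \Ge^\antishriek \subset \cofree(M)$, the value $(\dCE H + H \dCE)(x)$ computed in $\cofree(M)$ is the same as the value computed in $\Ge^\antishriek$. By Lemma~\ref{lemma:Hcontracts}, this value equals zero when $x$ is in arity $1$ and equals $x$ in higher arities. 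This is exactly $(\id - i \circ p)(x)$, since $i \circ p$ is the identity on the arity-$1$ summand $\I$ and zero on the positive-arity complement.

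The only mild obstacle is keeping the bookkeeping between $\cofree(M)$ and $\Ge^\antishriek$ clean, in particular ensuring that both sides of the homotopy identity on $\Ge^\antishriek$ genuinely agree with the restrictions of their counterparts on $\cofree(M)$; this is handled precisely by the combination of Lemma~\ref{lemma:HforGe} and Corollary~\ref{cor:coderrestricts}, so no new computation is required.
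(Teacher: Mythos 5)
Your proposal is correct and follows exactly the paper's route: the paper also deduces the proposition directly from Lemma~\ref{lemma:Hcontracts}, Lemma~\ref{lemma:HforGe}, and Corollary~\ref{cor:coderrestricts}, and you have merely spelled out the bookkeeping (arity-$1$ identification with $\I$, chain-map property of $i$ and $p$) that the paper leaves implicit.
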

\begin{proof}
This is a direct corollary of Lemma~\ref{lemma:Hcontracts}, Lemma~\ref{lemma:HforGe}, and Corollary~\ref{cor:coderrestricts}.
\end{proof}
\begin{remark}
One can easily check that the dual of the chain complex $(\Ge^{\ac},d_\psi)$ is isomorphic to both the Koszul complex $ Lie^{\ac}\circ_\kappa  Lie$ (see \cite[Section~$7.4$]{LodayVallette10}) and the Chevalley-Eilenberg complex of the free Lie algebra.  This isomorphism along with the preceding proposition implies as a corollary the well-known facts that the Lie and commutative operads are Koszul, and that, equivalently, the Chevalley-Eilenberg homology of the free Lie algebra is trivial. 
\end{remark}
\begin{defi}
We define a map of $\sS$-modules
\begin{equation*}\theta: \cofree(\delta)\otimes \cofree(M)\to \cofree(M\oplus \KK\delta)\end{equation*}
as follows.  We will describe the image of $\delta^m\otimes x$ where $x$ has underlying tree $T$.  Let $\lambda$ range over assignments of a non-negative integer to each edge of $T$ so that the sum of all the integers is $m$.

Then the image of $\delta^m\otimes x$ has underlying tree $T'$ which is obtained from $T$ by inserting $\lambda(e)$ bivalent vertices on each edge $e$, labeled by $\delta$.
\end{defi}

\begin{lemma}\label{lemma:rhodistinverse}
The restriction of $\theta$ to $\cofree(\delta)\otimes \Ge^\antishriek$, still denoted $\theta$, is the inverse to the distributive isomorphism $\rho:\qBV^\antishriek\to\cofree(\delta)\otimes \Ge^\antishriek$.
\end{lemma}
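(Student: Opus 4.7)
The plan is to show $\rho\circ\theta=\mathrm{id}$, from which the lemma follows since $\rho$ is known to be an isomorphism.

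First, I would verify that $\theta$ lands in the sub-cooperad $\qBV^{\antishriek}\subset \cofree(M\oplus \KK\delta)$. Since $\qBV^{\antishriek}$ is the quadratic cooperad $\mathcal{C}(sV, s^2\qR)$, it suffices to check that all $2$-vertex projections of $\theta(\delta^m\otimes x)$ lie in $s^2\qR$. The $2$-vertex adjacent pairs come in three types: (a) two $\mu/\beta$ vertices, where cancellation follows from $x\in\Ge^{\antishriek}$; (b) two adjacent bivalent $\delta$-vertices, whose projection lies in $s^2\qR$ via the relation $\Delta^2=0$; and (c) a $\delta$ adjacent to a $\mu$ or $\beta$ vertex. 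Case (c) is the key computation: for each fixed binary vertex $v$ of the underlying tree of $x$ and each fixed placement of $m-1$ of the deltas, the three summands of $\theta(\delta^m\otimes x)$ obtained by placing the remaining $\delta$ on the three edges adjacent to $v$ combine, after projection, to give exactly the derivation-type relation $\delta\circ_1 \mu - \mu\circ_1\delta - \mu\circ_2\delta\in s^2\qR$ (and analogously with $\beta$ in place of $\mu$).

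Second, I would compute $\rho\circ\theta(\delta^m\otimes x)$ using the description of $\rho$ as the dual of the distributive law $\lambda\colon D\circ\Ge \to \Ge\circ D$ that produces $\qBV\cong\Ge\circ D$. The isomorphism $\rho$ identifies an element of $\qBV^{\antishriek}$ with its normal form in $T^c(\delta)\otimes\Ge^{\antishriek}$, where all $\delta$-decorated vertices have been pushed to the top of the tree using the dual derivation relations. Since each summand of $\theta(\delta^m\otimes x)$ carries exactly $m$ deltas spread on the edges of the Gerstenhaber skeleton $x$, this normal form is precisely $\delta^m\otimes x$, yielding $\rho\circ\theta=\mathrm{id}$. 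Because $\rho$ is bijective by the distributive law theorem, we conclude $\theta=\rho^{-1}$.

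The main obstacle will be step one, case (c): the formula for $\theta$ distributes $\delta$'s with trivial combinatorial coefficient, while the derivation relation in $s^2\qR$ carries Koszul signs inherited from the double suspension and from the degrees of $\mu$ (degree $1$), $\beta$ (degree $2$), and $\delta$ (degree $2$). Matching the unsigned combinatorics of $\theta$ against the signed relation amounts to careful Koszul-sign bookkeeping for bivalent $\delta$-insertions; verifying this compatibility is exactly what justifies the definition of $\theta$ with coefficient one on every distribution.
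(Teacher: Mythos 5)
Your proposal follows the paper's proof almost step for step: you reduce to showing $\rho\circ\theta=\mathrm{id}$ and invoking bijectivity of $\rho$, and your verification that $\theta$ lands in $\qBV^{\antishriek}$ splits edge contractions into exactly the paper's three cases --- two $\mu/\beta$ vertices (handled by $x\in\Ge^{\antishriek}$), two $\delta$ vertices (handled by $\Delta^2=0$), and a $\delta$ adjacent to a binary vertex, where you correctly identify that for each fixed distribution $\lambda'$ of $m-1$ deltas the three placements of the remaining $\delta$ on the edges around $v$ assemble into the three-term relation of $\qR$. That half is right and is the same argument as in the paper.

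The one genuine soft spot is your justification of $\rho\circ\theta=\mathrm{id}$. You argue that $\rho$ sends an element to its ``normal form'' obtained by pushing the $\delta$-vertices through the tree, and that since every summand of $\theta(\delta^m\otimes x)$ carries $m$ deltas on the skeleton $x$, the normal form is $\delta^m\otimes x$. Read literally, this reasoning would assign the normal form $\delta^m\otimes x$ to \emph{each} of the many summands and hence give $\rho\theta=N\cdot\mathrm{id}$ for $N$ the number of distributions $\lambda$, not the identity; it does not pin down the coefficient. The point you are missing is that $\rho$ is the cooperad decomposition followed by the projection onto the two-level shape $T^c(\delta)\otimes\Ge^{\antishriek}$ (with the $\delta$'s at the root, not at the leaves as you wrote), so it annihilates every summand of $\theta(\delta^m\otimes x)$ except the single one in which all $m$ bivalent $\delta$-vertices sit on the outgoing root edge; that unique summand maps to $\delta^m\otimes x$ with coefficient one. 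With that observation in place of the normal-form heuristic, your argument closes and coincides with the paper's.
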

\begin{proof}
First, let $x\in \Ge^\antishriek$.  We will verify that $\theta(\delta^n \otimes x)$ is in $\qBV^\antishriek$ by checking that $d_2\theta$ is zero on $\cofree(\delta)\otimes \Ge^\antishriek$ (here $d_2$ is the differential induced by composition in $\qBV$).

The map $\theta$ inserts vertices decorated by $\delta$, and $d_2$ composes pairs of adjacent vertices.  The sum involved in applying $d_2$ includes compositions involving $0$, $1$, and $2$ vertices decorated by $\delta$.  Each of these vanishes for a different reason.
\begin{enumerate}
\item The insertion of a vertex decorated with $\delta$ commutes up to sign with compositions that do not involve it, so inserting $m$ vertices decorated with $\delta$ and then contracting an edge whose vertices are decorated by $\mu$ or $\beta$ is the same as contracting the edge first and then inserting vertices decorated with $\delta$.  But since $d_2^{\Ge^\antishriek}$ coincides with $d_2^{\qBV^\antishriek}$ on the $\delta^0$ component of $\qBV^\antishriek$, and we are starting in the kernel of $d_2^{\Ge^\antishriek}$ to begin with, this summand is zero.
\item Contracting an edge whose vertices are both decorated by $\delta$ gives a bivalent vertex whose decoration is $s (\Delta \circ \Delta)$, which is zero since $\Delta\circ \Delta=0$ in $\qBV$. 
\item Finally, consider contracting an edge between a vertex $v$ decorated by a $\mu$ or $\beta$ and an adjacent vertex decorated by a $\delta$.  Let $\lambda'$ be a map from the edges of $T$ to the natural numbers so that the sum of the images adds to $m-1$.  There are precisely three choices of $\lambda$ with a $\delta$ adjacent to $v$ which can be forgotten to yield an element whose underlying tree is $T$ with vertices inserted according to $\lambda'$.  The sum of the three contractions with $v$ associated to $\lambda'$ together make up a relation of $\qBV$.  
\end{enumerate}

Now consider $\rhodist \theta(\delta^m\otimes x)$.  Because $\rhodist$ first decomposes and then projects, it is zero on any tree decorated by $\beta$, $\delta$, and $\mu$ unless all of the vertices decorated by $\delta$ are below all of the other vertices.  There is precisely one summand in the sum defining $\theta$ which satisfies this condition.  That is the summand corresponding to the partition $\lambda$ with $\lambda$ of the outgoing edge of the root equal to $m$ and $\lambda$ of every other edge equal to zero. The map $\rhodist$ splits this into two levels and then projects; the only way for the projection to be nonzero is for it to split with $\delta^m$ as the bottom level; then $\rhodist\theta(\delta^m\otimes x)=(\delta^m\otimes x)$.  Because $\rho$ is an isomorphism, a one-sided inverse is an inverse.
\end{proof}

\begin{lemma}\label{lemma:phipsi}
Under the above isomorphism $\theta$, 
 the differential   $\delta^{-1}\otimes d_\psi$ is sent to $d_\varphi$:
  \[
    (\qBV^\antishriek, d_\varphi)\cong (\cofree(\delta)\otimes \Ge^\antishriek,\delta^{-1}\otimes d_\psi)
    \]
\end{lemma}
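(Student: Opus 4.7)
I would verify the equality of the two differentials by a direct computation on a generic element $\delta^m \otimes L_1 \odot \cdots \odot L_t \in \cofree(\delta) \otimes \Ge^\antishriek$. The $\theta$-transport of $d_\varphi$ is already expressed in this language by formula~\eqref{Coder}, while $\delta^{-1} \otimes d_\psi$ applied to this element evaluates to $\delta^{m-1} \otimes d_\psi(L_1 \odot \cdots \odot L_t)$. Matching the two reduces the lemma to the single identity in $\Ge^\antishriek$:
\[
d_\psi(L_1 \odot \cdots \odot L_t) = \sum_{i=1}^{t} (-1)^{\varepsilon_i}\, L_1 \odot \cdots \odot L_i' \odot L_i'' \odot \cdots \odot L_t.
\]

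To prove this identity, I would first expand $L_1 \odot \cdots \odot L_t$ as an explicit sum of $\mu, \beta$-decorated trees in $\cofree(M)$, unfolding the symmetrization in $\mathcal{S}^c Com_1^c$ together with the tree representatives of each $L_i \in \mathcal{S}^c Lie^c$. Applying $d_\psi$ via Lemma~\ref{lemma:cofreecoder} then produces a signed sum of vertex contributions. The contributions where $\psi$ acts on the root vertex of one of the Lie subtrees $L_i$ correspond exactly to the right-hand side: removing that root splits $L_i$ into its binary $\mathcal{S}^c Lie^c$-cooperad decomposition $L_i' \odot L_i''$, while the freshly produced vertex joins the $\mathcal{S}^c Com_1^c$-part and increases the outer arity by one.

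The main obstacle is to show that the remaining contributions — from $\psi$ acting at an internal (non-root) vertex of some $L_i$, or at a vertex of the $\mathcal{S}^c Com_1^c$ part — all cancel once re-expressed in the canonical $\mathcal{S}^c Com_1^c \circ \mathcal{S}^c Lie^c$ normal form. The key tool is Corollary~\ref{cor:coderrestricts}, which guarantees that the output already lies in $\Ge^\antishriek$, together with the explicit quadratic relations of Lemma~\ref{lemma:Ge2} (associativity, Leibniz, Jacobi for the shifted operations), which allow one to rewrite the misplaced-vertex trees into normal form. Careful sign-bookkeeping, parallel to the analysis in the proof of Lemma~\ref{lemma:HforGe} showing that $H$ preserves $\Ge^\antishriek$, should then produce the needed cancellation and complete the proof.
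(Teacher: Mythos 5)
Your reduction of the lemma to the single identity
$d_\psi(L_1\odot\cdots\odot L_t)=\sum_i(-1)^{\varepsilon_i}L_1\odot\cdots\odot L_i'\odot L_i''\odot\cdots\odot L_t$
in $\Ge^\antishriek$ is reasonable (granting formula \eqref{Coder} and the compatibility of $\theta$ with the normal-form coordinates), but the argument you sketch for that identity has a genuine gap. The contributions of $\psi$ at the root vertices of the Lie subtrees do \emph{not} "correspond exactly to the right-hand side", because the realization of ${\mathcal S}^c Com_1^c\circ{\mathcal S}^c Lie^c$ inside $\cofree(M)$ is not given by naively grafting tree representatives: already in arity $3$, the normal-form element corresponding to $Com_1^c(2)\otimes\big(Lie^c(2)\otimes Lie^c(1)\big)$ is a full Leibniz element such as $\bt{\mu}{\beta}{1}+\bt{\beta}{\mu}{2}+\bt{\mu}{\beta}{3}$ of Lemma~\ref{lemma:Ge2}, which mixes monomials with $\beta$ at the root and monomials with $\mu$ at the root. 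So the right-hand side of your identity contains tree monomials in which the new $\beta$ is nowhere near $L_i'$ and $L_i''$, and conversely the "root-vertex" terms of $d_\psi$ land on vertices sitting in different positions in different monomials. Moreover, the remaining terms cannot be disposed of by "rewriting into normal form using the relations": $\Ge^\antishriek$ is a \emph{sub}-$\Sy$-module of $\cofree(M)$, not a quotient, so its elements are specific linear combinations of decorated trees and there is nothing to rewrite — one must prove an actual equality of linear combinations, and Corollary~\ref{cor:coderrestricts} only guarantees that $d_\psi$ lands in $\Ge^\antishriek$, not which element of it is produced.

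The missing idea — which is the paper's entire (two-line) proof — is that both maps in question are coderivations with values in a cofree cooperad and are therefore determined by their projections onto the cogenerators. The paper works one level up, with $\theta:\cofree(\delta)\otimes\cofree(M)\to\cofree(M\oplus\KK\delta)$: both $\tilde d_\varphi\circ\theta$ and $\theta\circ(\delta^{-1}\otimes d_\psi)$ satisfy the co-Leibniz rule relative to $\theta$, so it suffices to compare their components landing in $M\oplus\KK\delta$, and the only nonzero such component is $\delta\otimes\mu\mapsto\beta$ on both sides — which is precisely the definition of $\varphi$. The same principle would repair your computation directly: $d_\psi$ and the splitting map $\sum_i(\pm)\,L_1\odot\cdots\odot L_i'\odot L_i''\odot\cdots\odot L_t$ are both coderivations of $\Ge^\antishriek\hookrightarrow\cofree(M)$ and both project onto $M$ as $\mu\mapsto\beta$, $\beta\mapsto 0$; hence they coincide. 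Without this observation, the term-by-term tree bookkeeping you propose is far heavier than your sketch suggests and, as outlined, does not close.
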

\begin{proof}
It is enough to prove it on the level of the cofree cooperads. We show that the following diagram is commutative
$$\xymatrix{\cofree(\delta)\otimes \cofree(M)  \ar[r]^-\theta  \ar[d]^{\delta^{-1}\otimes d_\psi}  &  \cofree(M\oplus \KK\delta) \ar[d]^{\tilde{d}_\varphi}\\ \cofree(\delta)\otimes \cofree(M)   \ar[r]^\theta &  \cofree(M\oplus \KK\delta) \ ,   }$$
where $\tilde{d}_\varphi$ is the unique coderivation of the cofree cooperad $\cofree(M\oplus \KK\delta)$ which extends the map $\varphi$. Since $\delta^{-1}\otimes d_\psi$ is a coderivation, it is enough to prove it by projecting onto the space of cogenerators $M\oplus \KK \delta$. We conclude by showing that the only non-trivial component is 
$$\xymatrix{  
\begin{pspicture}(-.7,.43)(.7,1.1)
\psline(0,0)(0,.3)
\psline(.2,.7)(.5,1)
\psline(-.2,.7)(-.5,1)
\rput(0,.5){$\mu$}
\uput[d](0,.1){${\delta}$}
\end{pspicture}  \ar@{|->}[r] & 
\begin{pspicture}(-.7,.43)(.7,1.1)
\psline(0,0)(0,.3)
\psline(.2,.7)(.5,1)
\psline(-.2,.7)(-.5,1)
\rput(0,.5){$\mu$}
\uput[d](0,.1){${\delta}$}
\end{pspicture}
-
\begin{pspicture}(-.7,.43)(.7,1.1)
\psline(0,0)(0,.3)
\psline(.2,.7)(.5,1)
\psline(-.2,.7)(-.5,1)
\rput(0,.5){$\mu$}
\uput[u](-.5,.9){${\delta}$}
\end{pspicture}
-
\begin{pspicture}(-.7,.43)(.7,1.1)
\psline(0,0)(0,.3)
\psline(.2,.7)(.5,1)
\psline(-.2,.7)(-.5,1)
\rput(0,.5){$\mu$}
\uput[u](.5,.9){${\delta}$}
\end{pspicture}  \\
\ar@{|->}[d] & \ar@{|->}[d]\\
\begin{pspicture}(-.7,.43)(.7,1.1)
\psline(0,0)(0,.3)
\psline(.2,.7)(.5,1)
\psline(-.2,.7)(-.5,1)
\rput(0,.5){${\beta}$}
\end{pspicture} \ar@{|->}[r] & 
\begin{pspicture}(-.7,.43)(.7,1.1)
\psline(0,0)(0,.3)
\psline(.2,.7)(.5,1)
\psline(-.2,.7)(-.5,1)
\rput(0,.5){${\beta}$}
\end{pspicture}
 \ . } $$
\end{proof}

\begin{prop}\label{prop:HqBV}
	Under the isomorphism of Lemma~\ref{lemma:phipsi}, the chain complex $(\qBV^\antishriek,d_\varphi)$ admits a degree given by the powers $\delta^m$ of $\delta$, for which:
	\begin{equation*}
	\homol(\qBV^\antishriek)_{(m)}=\left\{\begin{array}{lll}\text{one dimensional, spanned by } (\delta^m\otimes \mathrm{I})&:&m>0\\\text{isomorphic to }1\otimes \Ge^\antishriek/\im \dCE&:&m=0.\end{array}\right.
	\end{equation*}
\end{prop}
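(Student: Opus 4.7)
The plan is to exploit the fact that, under the isomorphism of Lemma~\ref{lemma:phipsi}, the differential acts by $\delta^m \otimes x \mapsto \delta^{m-1} \otimes d_\psi(x)$ (cf.~formula~\eqref{Coder}), so it strictly decreases the power of $\delta$ by one. Thus the $\delta$-power furnishes a well-defined non-negative degree on $\qBV^\antishriek$, and the computation of homology can be carried out one $\delta$-degree at a time.

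Writing $\cofree(\delta) \otimes \Ge^\antishriek = \bigoplus_{m \geq 0} \delta^m \otimes \Ge^\antishriek$, the complex takes the form of the one-way strand
\[
\cdots \longrightarrow \delta^2 \otimes \Ge^\antishriek \xrightarrow{d_\psi} \delta \otimes \Ge^\antishriek \xrightarrow{d_\psi} 1 \otimes \Ge^\antishriek \longrightarrow 0,
\]
where the convention $\delta^{-1} = 0$ forces the differential to vanish on the summand $m = 0$. Identifying cycles and boundaries at each $\delta^m$ separately, one reads off that for $m \geq 1$ the homology is $\delta^m \otimes H_\bullet(\Ge^\antishriek, d_\psi)$, while for $m = 0$ the entire summand $1 \otimes \Ge^\antishriek$ is cyclic with boundary $1 \otimes \im d_\psi$, giving $1 \otimes (\Ge^\antishriek / \im d_\psi)$.

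To finish, I would invoke Proposition~\ref{prop:contractible}, which states that the counit $(\Ge^\antishriek, d_\psi) \to (\mathrm{I}, 0)$ is a deformation retract (with explicit homotopy $H$); this gives $H_\bullet(\Ge^\antishriek, d_\psi) \cong \mathrm{I}$, a one-dimensional $\Sy$-module concentrated in arity $1$. Substituting this into the computation above yields that $H_\bullet(\qBV^\antishriek)_{(m)}$ is one-dimensional and spanned by $\delta^m \otimes \mathrm{I}$ for every $m \geq 1$, as claimed. There is essentially no obstacle in this argument: once the isomorphism of Lemma~\ref{lemma:phipsi} and the contracting homotopy of Proposition~\ref{prop:contractible} are available, the proof is pure bookkeeping, and the only mild subtlety is to note the vanishing of the differential on the $\delta^0$-component, which is immediate from formula~\eqref{Coder}.
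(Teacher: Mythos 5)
Your argument is correct and is essentially the paper's own proof: both write the complex as the one-way strand $\cdots\to\delta^m\otimes \Ge^\antishriek\xrightarrow{\delta^{-1}\otimes d_\psi}\delta^{m-1}\otimes\Ge^\antishriek\to\cdots\to 1\otimes\Ge^\antishriek\to 0$, invoke Proposition~\ref{prop:contractible} to identify $H_\bullet(\Ge^\antishriek,\dCE)\cong\I$ in each strictly positive $\delta$-degree, and observe that at $\delta^0$ everything is a cycle so the homology is the quotient by $\im\dCE$. The extra bookkeeping you supply (the differential lowering the $\delta$-power by exactly one, and its vanishing on the $m=0$ summand) is exactly what the paper leaves implicit.
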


\begin{proof}
Write the chain complex as
\[
 \xymatrix{
 \cdots\ar[r]&\delta^m\otimes \Ge^\antishriek\ar[rr]^{\delta^{-1}\otimes \dCE}&&\delta^{m-1}\otimes\Ge^\antishriek\ar[r]&\cdots\ar[r]&\Ge^\antishriek\ar[r]&0}  
  \]
The homology is then one dimensional by Proposition~\ref{prop:contractible} everywhere except at $1\otimes \Ge^\antishriek$, where everything is in the kernel of the differential so the homology is just the quotient by the image of $\dCE$.
\end{proof}

\begin{proof}[Proof of Theorem~\ref{thm:MainDefRetract}]
We prove that the following data 
\begin{eqnarray*}
\xymatrix@C=13pt{     *{
({T}^c(\delta)\otimes{\G}^\antishriek,  \delta^{-1}\otimes d_\psi)  \  \ } \ar@<10ex>@(dl,ul)[]^{\delta\otimes H}\ \ar@<0.5ex>[r]^(0.48)p & *{\ 
({T}^c(\delta)   \oplus \im (H d_\psi), 0). \quad  }  \ar@<0.5ex>[l]  }
\end{eqnarray*}
form a deformation retract, where the projection map $p$ is the sum of the projection onto $T^c(\delta)$ and the projection onto $\G^{\ac}$ composed with $Hd_\psi$.
Assume that $x$ is in the coaugmentation coideal $\overline{\Ge^\antishriek}$.  Since $H$ is a contracting homotopy for $\dCE$, $(\dCE H+ H\dCE) x=x$.  Then $\dCE H\dCE x= -H\dCE^2 x + \dCE x= \dCE x$ so $(x-H\dCE x)$ is closed under $\dCE$.  Since $\Ge^\antishriek$ is contractible and $x$ is in the coaugmentation coideal, this means that $x-H\dCE x$ is in the image of $\dCE$, therefore in the image of $d_\varphi$.  This shows that $H\dCE x$ is in the same homology class as $x$.  It is independent of choice of representative because it gives zero on all of $\im \dCE$.  
A quick calculation verifies that $d_\varphi (\delta \otimes H)-(\delta\otimes H)d_\varphi$ gives the projection onto $\delta^m \otimes \overline{\Ge^\antishriek}$ except on the rightmost factor, where it gives $\id - H\dCE$.  This concludes the proof of the theorem, with the exception of the rightmost identification with the dual to the Gravity operad given in the next section.
\end{proof}

\subsection{The homology of $\BV^{\ac}$ in terms of the moduli space of curves and the Gravity operad}

Let us recall from E. Getzler's papers \cite{Getzler94bis, Getzler95} the definition of the quadratic operad $Grav$ encoding \emph{gravity algebras}. It is generated by skew-symmetric operations $[x_1,\ldots, x_n] $ of degree $2-n$ for any $n \ge 2$, which satisfy  the following relations:
\begin{eqnarray*}
\sum_{1 \leq i < j \leq k} \pm [[x_i, x_j], x_1, \ldots, \widehat{x_i}, \ldots,  \widehat{x_j}, \ldots, {x_k}, y_1, \ldots, y_l]=  \left\lbrace \begin{array}{lll}
[[x_1, \ldots, x_k], y_1, \ldots, y_l] & \text{for} &  l>0,\\
0  &  \text{for}&   l=0.
\end{array}
\right.
\end{eqnarray*}
The sign is the Koszul sign coming from the permutation of the elements. 

We consider the moduli space $\mathcal{M}_{0,n+1}$  of genus $0$ curves with $n+1$ marked points. 
The gluing along two points and the Poincar\'e residue map induce an operad structure on the suspension $sH_\bullet(\mathcal{M}_{0,n+1})$ of its homology, see \cite[Section~$3.4$]{Getzler95}. Let ${\mathcal S}^{-1}$ denote both the desuspension operad and cooperad structure on $\End_{\KK s^{-1}}.$

\begin{prop}[\cite{Getzler94bis}]\label{prop:Grav=HM}
The gravity operad is related to the homology of the moduli space of genus $0$ curves by the following isomorphism of operads: 
$${\mathcal S}^{-1}Grav \cong sH_\bullet(\mathcal{M}_{0,n+1}) \ . $$
\end{prop}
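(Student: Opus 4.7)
The plan is to reproduce Getzler's original argument \cite{Getzler94bis, Getzler95} in three steps: construct the operad structure on $sH_\bullet(\mathcal{M}_{0,n+1})$ via Poincar\'e residues, exhibit enough relations to produce a surjection from $\mathcal{S}^{-1}Grav$, and then verify it is an isomorphism.

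First, I would make the operad structure concrete. Labelling the marked points $0,1,\ldots,n$ with $0$ as root, identification of the $i$-th marked point of a curve $C_1\in\overline{\mathcal{M}}_{0,n+1}$ with the root of $C_2\in\overline{\mathcal{M}}_{0,m+1}$ produces a stable nodal curve and yields an embedding of a boundary divisor $D_i\cong\overline{\mathcal{M}}_{0,n+1}\times\overline{\mathcal{M}}_{0,m+1}\hookrightarrow\overline{\mathcal{M}}_{0,n+m}$. Intersection with this codimension-one stratum together with the Poincar\'e residue defines a map of degree $-1$ from $H_\bullet(\mathcal{M}_{0,n+1})\otimes H_\bullet(\mathcal{M}_{0,m+1})$ to $H_\bullet(\mathcal{M}_{0,n+m})$, which becomes a degree-zero partial composition $\circ_i$ after applying the shift $s$. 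Associativity and $\Sy$-equivariance follow from standard functoriality of Poincar\'e residues and the compatibility of the boundary strata in $\overline{\mathcal{M}}_{0,N}$.

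Second, I would use the fibration $\mathcal{M}_{0,n+1}\to\mathcal{M}_{0,n}$ forgetting the last marked point, whose fiber is $\mathbb{P}^1$ with $n$ points removed. Iterating and combining this with Arnold's presentation for the cohomology of the configuration spaces of $\mathbb{C}$ produces an explicit description of $H^\bullet(\mathcal{M}_{0,n+1})$ and dually of $H_\bullet(\mathcal{M}_{0,n+1})$. In each arity $n$, the $n$-ary shifted fundamental class $s[\mathcal{M}_{0,n+1}]$ is naturally antisymmetric in $\{1,\ldots,n\}$ and provides a candidate for the image of the gravity generator $[x_1,\ldots,x_n]$; the degrees match by the conventions recorded above.

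Third, define the operad morphism $\mathcal{S}^{-1}Grav\to sH_\bullet(\mathcal{M}_{0,n+1})$ on generators by $[x_1,\ldots,x_n]\mapsto s[\mathcal{M}_{0,n+1}]$, and verify the generalized Jacobi identities. This verification is the main obstacle. The gravity relation
\[
\sum_{1\leq i<j\leq k}\pm[[x_i,x_j],x_1,\ldots,\widehat{x_i},\ldots,\widehat{x_j},\ldots,x_k,y_1,\ldots,y_l]=
\begin{cases}[[x_1,\ldots,x_k],y_1,\ldots,y_l],& l>0,\\ 0,& l=0,\end{cases}
\]
should arise from writing the fundamental class of the codimension-one stratum of $\overline{\mathcal{M}}_{0,k+l+2}$ in which the inputs $\{1,\ldots,k\}$ collide (i.e., the image of a single partial-composition map) as a signed sum of strata of the form $D_{ij}$ where only the pair $\{i,j\}$ collides first, then the rest. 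Applying an additional Poincar\'e residue coming from the extra marked point $y_1$ explains the case $l>0$, while its absence when $l=0$ produces the vanishing sum because in that case the relevant stratum is compact and its total boundary is trivial in homology.

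Once the morphism is shown to be well defined, surjectivity follows because iterated partial compositions of fundamental classes exhaust $H_\bullet(\mathcal{M}_{0,n+1})$ (this is a consequence of the fiber-bundle description of step two). For injectivity I would compare Hilbert series: the dimensions of $\mathcal{S}^{-1}Grav(n)$ are computable from the quadratic presentation, while the dimensions of $H_\bullet(\mathcal{M}_{0,n+1})$ are classically known, and they agree. Alternatively, one can invoke the Koszul duality $Grav^{\antishriek}\cong\mathcal{H}yc\cong H_\bullet(\overline{\mathcal{M}}_{0,n+1})$ (as established in \cite{Getzler95}) together with the known Koszulity of this pair to conclude that the surjection from a quadratic operad onto an operad of the same graded dimension must be an isomorphism.
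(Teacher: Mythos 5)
The paper itself offers no proof of this proposition: it is imported verbatim from Getzler \cite{Getzler94bis}, with the operad structure on $sH_\bullet(\mathcal{M}_{0,n+1})$ (gluing plus Poincar\'e residue) taken from \cite[Section~3.4]{Getzler95}. So what you are really doing is reconstructing Getzler's argument, and your overall architecture --- residue-induced partial compositions, a morphism defined on the gravity generators, relations coming from identities among boundary divisors of $\overline{\mathcal{M}}_{0,N}$, and injectivity via a dimension count or via Koszul duality with $HyperCom\cong H_\bullet(\overline{\mathcal{M}}_{0,n+1})$ --- is the standard route and matches Getzler's.

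There is, however, a concrete error in your Step 3 that derails the construction as written: the generator $[x_1,\ldots,x_n]$ cannot be sent to a ``shifted fundamental class'' $s[\mathcal{M}_{0,n+1}]$. It has degree $2-n$ in $Grav(n)$, hence degree $(2-n)+(n-1)=1$ in $\mathcal{S}^{-1}Grav(n)$, while $sH_k$ sits in degree $k+1$; so the generator must land in $sH_0(\mathcal{M}_{0,n+1})$, i.e.\ it is the suspended class of a point. This is the one-dimensional \emph{trivial} $\Sy_n$-representation, which only becomes sign-twisted (hence matches the antisymmetry of the bracket) after the operadic desuspension $\mathcal{S}^{-1}$. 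A top-dimensional class would instead sit in degree $n-1$, and in any case $\mathcal{M}_{0,n+1}$ is an open affine variety whose top homology $H_{n-2}$ has rank $(n-1)!/1\cdot$(in particular $>1$ for $n\ge 3$, e.g.\ $H_1(\mathcal{M}_{0,4})$ has rank $2$), so there is no canonical fundamental class to use. Relatedly, the residue-induced composition \emph{raises} homological degree by $1$ rather than lowering it: only then does $sH_0\otimes sH_0\to sH_1$ reproduce $\deg[[x_1,x_2],x_3]=2$, and only then does a single suspension $s$ render the compositions of degree $0$. Once the generators are correctly placed in $H_0$ and the degree of the composition is fixed, the remainder of your plan (relations from the linear equivalences among boundary divisors, surjectivity because the brackets generate, injectivity from the Poincar\'e polynomial $\prod_{k=2}^{n-1}(1+kt)$ of $\mathcal{M}_{0,n+1}$ or from the Koszulity of the pair $(Grav, HyperCom)$) is sound and faithful to Getzler's proof.
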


\begin{prop}\label{prop:getzler}
The quotient $\Ge^\antishriek/\im d_\psi$ is a cooperad isomorphic to ${\mathcal S}^{-1}Grav^*$.
\end{prop}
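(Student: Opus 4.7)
The plan is to proceed in three steps, reducing the statement to the classical description of the gravity operad inside the Gerstenhaber operad.

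\textbf{Step 1: Cooperad structure on the quotient.} First I would verify that $\im d_\psi$ is a coideal of the cooperad $\Ge^\antishriek$. This follows formally from the fact that $d_\psi$ is a coderivation: the infinitesimal decomposition satisfies
\[
\Delta_{(1)} \circ d_\psi = (d_\psi\circ_{(1)}\id + \id \circ_{(1)} d_\psi) \circ \Delta_{(1)},
\]
so $\Delta_{(1)}(\im d_\psi)\subset \im d_\psi \circ_{(1)}\Ge^\antishriek + \Ge^\antishriek \circ_{(1)} \im d_\psi$. The counit vanishes on $\im d_\psi$ for degree reasons. Consequently $\Ge^\antishriek/\im d_\psi$ inherits a cooperad structure.

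\textbf{Step 2: Dualization.} Next I would pass to the linear dual. Arity-wise, $\Ge^\antishriek(n)$ is finite-dimensional, so $(\Ge^\antishriek/\im d_\psi)^*\cong \ker({}^t d_\psi)$ as a sub-operad of the operad $(\Ge^\antishriek)^*$. Koszul self-duality of the Gerstenhaber operad identifies $(\Ge^\antishriek)^*$ with ${\mathcal S}^{-1}\Ge$ up to operadic suspension, and under this identification ${}^t d_\psi$ becomes the derivation of ${\mathcal S}^{-1}\Ge$ dual to the coderivation which replaces a commutative product by a Lie bracket. This reduces the statement to exhibiting an isomorphism $\ker({}^t d_\psi)\cong {\mathcal S}^{-1}\Grav$.

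\textbf{Step 3: Match with Getzler's gravity operad.} Finally I would invoke Getzler's \cite{Getzler94bis, Getzler95} description of $\Grav$ as the sub-operad of $\Ge$ generated by iterated Lie brackets, whose defining relations are precisely the obstruction to such iterated brackets being derivations with respect to the commutative product. Since ${}^t d_\psi$ measures exactly this failure of derivation property (it is the transpose of the map that replaces $\mu$ by $\beta$, using the Leibniz relation $\varphi$ in $\BV$), the kernel is spanned by those combinations of iterated brackets which remain ``primitive'' with respect to the commutative product. I would then build a morphism ${\mathcal S}^{-1}\Grav \to \ker({}^t d_\psi)$ by sending the $n$-ary gravity bracket to the corresponding iterated Lie-bracket cocycle, check it respects Getzler's relations, and verify bijectivity dimensionwise using the known Poincaré series of both operads (for instance via the isomorphism ${\mathcal S}^{-1}\Grav\cong sH_\bullet(\mathcal{M}_{0,n+1})$ of Proposition~\ref{prop:Grav=HM} and a direct count of $\Ge^\antishriek/\im d_\psi$ coming from the Koszul/Chevalley--Eilenberg interpretation mentioned in the remark after Proposition~\ref{prop:contractible}).

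\textbf{Main obstacle.} The delicate point is Step 3: the explicit matching of Getzler's gravity generators with the ``Lie-bracket cocycles'' in $\ker({}^t d_\psi)$, including the correct signs coming from operadic (de)suspension, and the verification that these cocycles exhaust the kernel (and not just a sub-operad of it). This is where one must combine the combinatorial structure of the distributive law $\Ge\cong \Com\circ\Lie_1$ with Getzler's generalized-Jacobi identities; a Poincaré-series comparison is the cleanest way to avoid a lengthy direct combinatorial argument.
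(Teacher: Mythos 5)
Your Steps 1 and 2 follow the same route as the paper: dualize aritywise, use Koszul self-duality of the Gerstenhaber operad (via \cite{GetzlerJones94}) to identify $(\Ge^\antishriek)^*$ with a suspension of $\Ge$, and recognize ${}^t d_\psi$ as the derivation sending the shifted product to the shifted bracket and the bracket to zero. The divergence is in Step 3. At that point the paper simply cites Theorem~4.5 of \cite{Getzler94} --- note, not \cite{Getzler94bis, Getzler95}, which you invoke and which only supply the presentation of $Grav$ and the identification with $H_\bullet(\mathcal{M}_{0,n+1})$ --- and that theorem states precisely that ${\mathcal S}^{-1}Grav\cong \Ker \Delta$ for this derivation $\Delta$ on $\Ge$, so dualizing finishes the proof in one line. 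You instead propose to re-prove that identification, and as written there is a gap: exhibiting a morphism ${\mathcal S}^{-1}Grav\to\Ker({}^t d_\psi)$ and matching Poincar\'e series does not give an isomorphism unless you separately establish injectivity or surjectivity. This is fixable with material already in the paper: Proposition~\ref{prop:contractible} shows $(\Ge^\antishriek,d_\psi)$ is contractible onto $\I$, so dually $\Ker({}^t d_\psi)=\im {}^t d_\psi$ in every arity $\ge 2$; since ${}^t d_\psi$ is a derivation vanishing on brackets, its image is the suboperad generated by the elements ${}^t d_\psi(x_1\cdots x_n)$, which are exactly your iterated-bracket cocycles --- this gives surjectivity, and the dimension count then upgrades the morphism to an isomorphism. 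So your route can be completed, but it amounts to re-proving Getzler's theorem by hand; citing \cite[Theorem~4.5]{Getzler94} directly, as the paper does, is the shorter path.
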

\begin{proof}
This is the cooperadic dual of Theorem~$4.5$ of  \cite{Getzler94}.  The aritywise linear dual of the differential graded quadratic cooperad $(\Ge^\antishriek, d_\psi)$, with degree $1$ coderivation, is a differential graded quadratic operad, with degree $1$ derivation. (We consider the opposite homological degree on the linear dual). By \cite[Theorem~$3.1$]{GetzlerJones94}, the underlying operad is isomorphic to $(\Ge^\antishriek)^* \cong {\mathcal S}^2 \Ge:=\End_{\KK s^2} \otimes_{\textrm{H}} \Ge$, which  admits the same quadratic presentation as the operad $\Ge$ except for the $-2$ degree shift of the generators $s^{-2}\bullet$ and $s^{-2}\langle\, , \rangle$. By the universal property of quadratic operads, the derivation ${}^t d_\psi$ is characterized by the images of these generators, that is 
$s^{-2}\bullet \mapsto s^{-2}\langle\, , \rangle$ and $s^{-2}\langle\, , \rangle \mapsto 0$. Therefore, up to the degree shift, the derivation ${}^t d_\psi$ is equal to the derivation $\Delta$ on $\Ge$ defined in \cite{Getzler94}. Theorem~$4.5$ of loc. cit. states that ${\mathcal S}^{-1} Grav\cong \Ker \Delta$. Dually, it gives $\Ge^\antishriek/\im \dCE\cong {\mathcal S}^{-1} Grav^*$.
\end{proof}
This concludes the proof of Theorem~\ref{thm:MainDefRetract}.

\begin{thm}\label{thm:HBV=Grav}
There exist isomorphisms of graded $\Sy$-modules
$$H_\bullet(\B\, \BV) \cong H_\bullet(\qBV^\antishriek, d_\varphi)\cong  \overline{T}^c(\delta) \oplus {\mathcal S}^{-1} Grav^*\ .$$
\end{thm}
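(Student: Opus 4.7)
The plan is to reduce the statement to results already established in the paper. The second isomorphism
$$H_\bullet(\qBV^\antishriek, d_\varphi)\cong \overline{T}^c(\delta) \oplus {\mathcal S}^{-1} Grav^*$$
is essentially free: Theorem~\ref{thm:MainDefRetract} exhibits the right-hand side (equipped with zero differential) as a deformation retract of $(\qBV^\antishriek,d_\varphi)$, and Proposition~\ref{prop:getzler} identifies the relevant $\G^\antishriek/\im d_\psi$ summand with ${\mathcal S}^{-1}Grav^*$. Since a deformation retract is in particular a quasi-isomorphism and the target has trivial differential, the target coincides with its own homology, giving the second isomorphism. (Equivalently, apply Proposition~\ref{prop:HqBV}, whose $m=0$ piece is ${\mathcal S}^{-1}Grav^*$ by Proposition~\ref{prop:getzler} and whose $m>0$ pieces assemble into $\overline{T}^c(\delta)$.)

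For the first isomorphism, I would invoke the inhomogeneous Koszul duality theory for operads as developed in \cite{LodayVallette10} and applied to $\BV$ in \cite{GCTV09}. The operad $\BV$ is Koszul in this inhomogeneous sense: in the notation of the paper this is recorded by the Koszul resolution $\BVK=\Omega\,\BV^\antishriek\xrightarrow{\sim}\BV$ recalled from \cite[Theorem~6]{GCTV09}. The standard consequence of Koszulness is that the canonical twisting morphism $\kappa\colon\BV^\antishriek\to\BV$ is acyclic, equivalently that the induced morphism of dg cooperads
$$\BV^\antishriek=(\qBV^\antishriek,d_\varphi)\longrightarrow \B\,\BV$$
is a quasi-isomorphism. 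Passing to homology yields
$$H_\bullet(\qBV^\antishriek,d_\varphi)\cong H_\bullet(\B\,\BV),$$
which, combined with the previous step, gives the first isomorphism.

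Putting the two identifications together yields the chain of isomorphisms of graded $\Sy$-modules asserted in the theorem. The only non-formal ingredients are Theorem~\ref{thm:MainDefRetract} and Proposition~\ref{prop:getzler} together with the (inhomogeneous) Koszulness of $\BV$ from \cite{GCTV09}; no further argument is required. The main conceptual obstacle has already been cleared in Section~\ref{sec:homotopy}, namely the construction of the explicit contracting homotopy $H$ on $\G^\antishriek$ that trivializes $d_\psi$ up to the cokernel ${\mathcal S}^{-1}Grav^*$; once this is in place the theorem is a direct assembly of the pieces.
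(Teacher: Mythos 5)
Your handling of the second isomorphism is exactly the paper's: it is read off from Theorem~\ref{thm:MainDefRetract} (together with Proposition~\ref{prop:getzler} identifying $\G^\antishriek/\im d_\psi$ with ${\mathcal S}^{-1}Grav^*$), and since the retract has zero differential nothing more is needed there.

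The gap is in the first isomorphism. You pass from ``$\Omega\,\BV^{\ac}\qi\BV$'' to ``the induced map $\BV^{\ac}\to\B\,\BV$ is a quasi-isomorphism'' by invoking the standard equivalence between the two Koszul criteria. But the fundamental theorem of operadic twisting morphisms in \cite{LodayVallette10} is proved for \emph{connected weight-graded} dg (co)operads, and $\BV^{\ac}=(\qBV^{\ac},d_\varphi)$ is not weight graded as a dg cooperad: the coderivation $d_\varphi$ strictly lowers the weight. So that equivalence cannot be quoted verbatim in the inhomogeneous setting, and the implication you need (cobar side $\Rightarrow$ bar side) is precisely the one requiring an argument. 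The paper supplies it: the morphism of dg cooperads $f_\kappa:\BV^{\ac}\to\B\,\BV$ induced by the twisting morphism $\kappa$ preserves two filtrations, namely the weight filtration on $\qBV^{\ac}$ (on which $d_\varphi$ acts by $-1$, so the induced $d^0$ vanishes on the left) and the filtration on $\B\,\BV$ coming from the PBW filtration of $\BV$; by the Poincar\'e--Birkhoff--Witt theorem $\mathrm{gr}\,\BV\cong\qBV$, the right-hand $E^0$-page is $\B\,\qBV$, and $E^0(f_\kappa)=f_{\bar\kappa}:(\qBV^{\ac},0)\to\B\,\qBV$ is a quasi-isomorphism because the \emph{homogeneous} quadratic operad $\qBV$ is Koszul; one concludes by convergence of the bounded-below, exhaustive spectral sequences. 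Your sketch identifies the correct statement but replaces this comparison-of-spectral-sequences step, which is the actual content of the proof, by an appeal to a theorem whose hypotheses are not satisfied here.
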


\begin{proof}
The first isomorphism is a general fact about Koszul operads. In the case of an inhomogenous Koszul operad $\Po$, it is proved as follows. The degree $-1$ map $\qPo^{\ac} \epi sV \to V \mono \Po$ is a twisting morphism $\kappa : \Po^{\ac}=(\qPo^{\ac}, d_\varphi) \to \Po\in \Tw(\Po^{\ac}, \Po)$,  see \cite[Appendix~A]{GCTV09} or \cite[Section~$7.8$]{LodayVallette10}. By the general properties of the bar-cobar adjunction \cite[Section~$6.5$]{LodayVallette10}, it induces a morphism of dg cooperads $f_\kappa : \Po^{\ac} \to \B \, \Po$, which is equal to the following composite: 
$ \Po^{\ac} = \qPo^{\ac} \mono \F^c(sV) \to \F^c(s\oPo) =\B \, \Po$. On the right-hand side, the operad $\Po$ comes equipped with a filtration; we consider the induced filtration on the bar construction. On the left-hand side, we consider the filtration given by the weight grading on the cooperad $\qPo^{\ac}$. The coderivation $d_\varphi$ lowers this filtration by $1$ and the morphism $f_\kappa$ preserves the respective filtrations. By the Poincar\'e-Birkhoff-Witt theorem  \cite[Theorem~$39$]{GCTV09}, $\textrm{gr}\, \Po\cong \qPo$, the first page $(E^0, d^0)$ of the right hand-side spectral sequence is isomorphic to $\B\, \qPo$. So the map $f_\kappa$ induces the map
$f_{\bar \kappa} : (\qPo^{\ac}, 0) \to \B\,  \qPo$, on the level of the first pages of the spectral sequences, where $\bar \kappa$ is the twisting morphism associated to the  homogeneous quadratic operad $\qPo$. Since it is Koszul, the morphism $f_{\bar \kappa}$ is a quasi-isomorphism and we conclude by the convergence theorem of spectral sequences associated to bounded below and exhaustive filtrations \cite[Chapter~$11$]{MacLane95}.
The second isomorphism follows from Theorem~\ref{thm:MainDefRetract}.
\end{proof}

\begin{remarks}$\ $

\begin{itemize}
\item[$\diamond$] While we were writing this paper, V. Dotsenko and A. Khoroshkin in \cite{DotsenkoKhoroshkin09} proved, with another method (Gr\"obner bases for shuffle operads), the second isomorphism on the level of graded $\NN$-modules, i.e. without the action of the symmetric groups.

\item[$\diamond$] The cooperad $\Ge^{\ac}$ with the action of $d_\psi$  is the Koszul dual cooperad of the operad $\Ge$ with the action of $\Delta$ is the sense of Koszul duality theory of operads over Hopf algebras, see the Ph.D. Thesis of O. Bellier \cite{Bellier11} for more details. 
\end{itemize}
\end{remarks}

\section{The minimal model of the operad $\BV$}

In this section, we recall the notion of a homotopy cooperad, and we develop a transfer theorem for such structures across homotopy equivalences. We apply this result to  the deformation retract given in the previous section. This allows us to make the minimal model of the operad $\BV$ explicit. 

\subsection{Homotopy cooperad} We recall from \cite{VanderLaan02}  the notion of a {homotopy cooperad}, studied in more detail in \cite[Section~$4$]{MerkulovVallette09I}.

\begin{defi}[Homotopy cooperad]
A \emph{homotopy cooperad} structure on a graded $\Sy$-module $\Cc$ is the datum of a square-zero degree $-1$ derivation $d$ on the free operad $\free(s^{-1}\Cc)$ which respects the augmentation map. An \emph{$\infty$-morphism} $\Cc \rightsquigarrow \mathcal D$ of homotopy cooperads is a morphism of augmented dg operads between the associated  quasi-free operads 
$(\free(s^{-1}\Cc), d) \to (\free(s^{-1}\mathcal{D}), d')$. We denote this category by $\infty\textrm{-}\textsf{coop}_\infty$.
\end{defi}

We consider the isomorphism of $\Sy$-modules $\free(s^{-1}\Cc) \cong \free(\Cc)$ given by 
$$t(s^{-1}c_1, \ldots, s^{-1} c_n)\mapsto (-1)^{(n-1)|c_1|+(n-2)|c_2|+\cdots+|c_{n-1}|}  t(c_1, \ldots, c_n) \ . $$
Since the map $d$ is a derivation on a free operad, it is completely characterized by its image on generators 
$\Delta : \Cc \to \free(\Cc)$, under the above isomorphism.  The substitution of a tree $t$ at the $i^\textrm{th}$ vertex by a tree $t'$ is denoted by $t\circ_i t'$, see \cite[Section~$5.5$]{LodayVallette10} for more details.

\begin{prop}[\cite{MerkulovVallette09I}, Proposition~$24$]\label{prop:EquivDefHC}
The data of a homotopy cooperad $(\free(s^{-1}\Cc), d)$ is equivalent to a family of morphisms of $\Sy$-modules 
$\lbrace \Delta_t :  \Cc \to  t(\Cc) \rbrace_{t\in \Tree}$  such that 
\begin{itemize}
\item[$\diamond$] $\Delta_|=0$, 

\item[$\diamond$] the degree of $\Delta_t$ is equal to the number of vertices of $t$ minus 2, 

\item[$\diamond$] for every $c\in \Cc$, the number of non-trivial $\Delta_t(c)$ is finite, 

\item[$\diamond$] for every $c\in \Cc$,
$$\sum 
(-1)^{i-1+ k(l-i)}
 \, t \circ_i t' (c_1, \ldots, c_{i-1},
c'_1, \ldots, c'_k, c_{i+1}, \ldots, c_l)=0\ ,$$ 
where the sum runs
over the elements $t(c_1, \ldots, c_l)$ and $t'(c'_1, \ldots ,
c'_k)$ such that 
$$\Delta(c)=\sum_{t\in \Tree} \Delta_t(c)=\sum_{t\in\Tree} t(c_1, \ldots, c_l) \quad  \textrm{and} \quad 
\Delta(c_i)=\sum_{t'\in \Tree} \Delta_{t'}(c_i)=\sum_{t'\in\Tree} t'(c'_1, \ldots , c'_k) \ .$$
\end{itemize}
\end{prop}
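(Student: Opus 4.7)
The plan is to unpack the definition of a derivation on a free operad and then rewrite the condition $d^2=0$ as the compatibility displayed in the statement. By the universal property of the free operad, a derivation $d$ on $\free(s^{-1}\Cc)$ is uniquely determined by its restriction to generators, i.e.\ by a degree $-1$ morphism of $\Sy$-modules
\[
  d|_{s^{-1}\Cc}\colon s^{-1}\Cc \longrightarrow \free(s^{-1}\Cc)
  \cong \bigoplus_{t\in\Tree} t(s^{-1}\Cc).
\]
Projecting onto each tree summand and transporting along the sign isomorphism $\free(s^{-1}\Cc)\cong \free(\Cc)$ recalled just before the proposition produces components $\Delta_t\colon \Cc\to t(\Cc)$. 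A short degree count shows that if $t$ has $n$ vertices then $t(s^{-1}\Cc)$ sits in degree $n$ lower than $t(\Cc)$; composed with the $s^{-1}$ in the source and the $-1$ carried by $d$, this forces $\Delta_t$ to have degree $n-2$. The condition that $d$ preserves the augmentation means it has no component on the trivial tree $|$, so $\Delta_|=0$. Finally, in order for $d(s^{-1}c)$ to lie inside $\free(s^{-1}\Cc)$ rather than in a completion, one needs pointwise finiteness of the family $\{\Delta_t(c)\}_{t\in\Tree}$.

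The content of the equivalence therefore reduces to translating the relation $d^2=0$. First I would verify that on a free operad a derivation squares to zero if and only if it squares to zero on generators, so that it suffices to analyse $d^2(s^{-1}c)$ for all $c\in\Cc$. Writing
\[
  d(s^{-1}c) \;=\; \sum_{t\in\Tree}\, \pm\, t(s^{-1}c_1,\ldots,s^{-1}c_l)
\]
via the decomposition above, and applying the derivation $d$ a second time, the Leibniz rule spreads $d$ across each of the $l$ vertex labels. The summand where $d$ hits the $i^{\mathrm{th}}$ label, inserting a $k$-vertex tree $t'(c'_1,\ldots,c'_k)$, is by definition $t\circ_i t'$. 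The sign $(-1)^{i-1+k(l-i)}$ is exactly what one obtains by commuting the odd symbol $d$ past the first $i-1$ letters $s^{-1}c_j$ and then commuting the $k$ new letters $s^{-1}c'_j$ past the remaining $l-i$ letters $s^{-1}c_j$, using the $s^{-1}$ shift to absorb the degree-counting into a single Koszul factor. Collecting all such contributions gives the identity displayed in the last bullet; conversely, if this identity holds for every $c$, then $d^2$ vanishes on generators and hence on the whole of $\free(s^{-1}\Cc)$.

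Putting these pieces together yields the claimed bijection between square-zero, augmentation-preserving, degree $-1$ derivations on $\free(s^{-1}\Cc)$ and families $\{\Delta_t\}_{t\in\Tree}$ satisfying the four bulleted conditions. The compatibility on $\infty$-morphisms is analogous: a morphism of augmented quasi-free dg operads is determined by its corestriction to generators, and unravelling $d'\circ f = f\circ d$ on generators produces the usual tower of compatibility relations between its tree-indexed components. The only genuinely delicate point in the whole argument is the bookkeeping of signs, which is where I expect to spend most of the work; everything else is formal once the free-operad universal property and the sign isomorphism $\free(s^{-1}\Cc)\cong\free(\Cc)$ are in hand.
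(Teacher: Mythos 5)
Your sketch is correct and is exactly the standard argument: the paper itself gives no proof of this statement (it is quoted from Merkulov--Vallette, Proposition~24), and what you describe --- a derivation on a free operad is determined by its restriction to generators, the degree and augmentation conditions give the first three bullets, and $d^2=0$ reduces to generators because $d^2=\frac{1}{2}[d,d]$ is again a derivation, yielding the quadratic relation in the last bullet --- is the intended proof. The only point you defer, rightly flagged as the real work, is checking that the Koszul signs from the Leibniz rule combine with the signs of the isomorphism $\free(s^{-1}\Cc)\cong\free(\Cc)$ on $t$, $t'$ and $t\circ_i t'$ to produce the purely combinatorial factor $(-1)^{i-1+k(l-i)}$; that verification is routine but must actually be carried out.
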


A homotopy cooperad structure on a graded $\Sy$-module $\overline \Cc$ with vanishing maps $\Delta_t=0$ for trees $t\in \Tree^{(\ge 3)}$ with more than $3$ vertices is equivalent to a coaugmented dg cooperad structure on $\Cc:= \overline \Cc \oplus \I$. 
In this case, the definition in terms of a square-zero derivation on
the free operad is equivalent to the differential of the cobar
construction
 $\Omega\, \Cc$.

In the same way, the datum of an $\infty$-morphism $F : (\free(s^{-1}\Cc), d) \to (\free(s^{-1}\mathcal{D}), d')$ is equivalent to a morphism of $\Sy$-modules $f_\infty :  \Cc \to \free(\mathcal{D})$, that is, a family of morphisms $\lbrace f_t :  \Cc \to  t(\mathcal{D}) \rbrace_{t\in \Tree}$, satisfying some relations. An interpretation in terms of Maurer-Cartan elements is given in \cite[Section~$4.7$]{MerkulovVallette09I}.

The projection  $\Cc \to \free(\mathcal{C}) \epi \Cc$ of $d$ on the graded $\Sy$-module $\Cc$ endows it with a differential denoted by $d_\Cc$, which  is equal to the sum $d_\Cc=\sum \Delta_t$ over the corollas $t$. 
The images on corollas of any $\infty$-morphism  define a morphism of {dg} $\Sy$-modules $ 
(\Cc, d_\Cc) \to (\mathcal{D}, d_\mathcal{D})$. When this latter map is a quasi-isomorphism, the $\infty$-morphism  is called an \emph{$\infty$-quasi-isomorphism}.

\subsection{Homotopy transfer theorem for homotopy cooperads}

\begin{thm}\label{thm:HTTCoop}
Let $(\Cc, \lbrace \Delta_t \rbrace)$ be a homotopy cooperad. Let 
$(\mathcal{H}, d_\mathcal{H})$ be a dg $\Sy$-module, which is a homotopy retract of the dg $\Sy$-module $(\Cc, d_\Cc)$:
\begin{eqnarray*}
&\xymatrix{     *{ \quad \ \  \quad (\Cc, d_\Cc)\ } \ar@(dl,ul)[]^{h}\ \ar@<0.5ex>[r]^-{p} & *{\
(\mathcal{H}, d_\mathcal{H}) \ .\quad \ \  \ \quad }  \ar@<0.5ex>[l]^-{i}} &
\end{eqnarray*}
There is a homotopy cooperad structure on the dg $\Sy$-module $(\mathcal{H}, d_\mathcal{H})$, which extends the transferred composition maps $t(p)\circ \Delta_t\circ i$ and such that the map $p$ extends to an $\infty$-quasi-isomorphism.
\end{thm}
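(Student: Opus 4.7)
\begin{sketch}
The plan is to dualize the classical homotopy transfer theorem for $\Po_\infty$-algebras \cite[Section~$10.3$]{LodayVallette10}, following the tree-formula approach developed in \cite{MerkulovVallette09I}. By Proposition~\ref{prop:EquivDefHC}, it suffices to produce a family of decomposition maps $\{\Delta_t^{\mathcal{H}}\colon \mathcal{H}\to t(\mathcal{H})\}_{t\in\Tree}$ satisfying the listed coassociativity-type relations, and to upgrade $p$ to a compatible family $\{p_t\colon\mathcal{H}\to t(\Cc)\}_{t\in\Tree}$ of operations encoding an $\infty$-morphism.

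First, I would define the transferred family by an explicit sum over nested trees. For every tree $t\in\Tree$, set
\[
\Delta_t^{\mathcal{H}} \ := \ \sum_{(T,\,\nu)} \ \pm\, t(p)\circ \Phi_{(T,\nu)}\circ i,
\]
where the sum runs over nested trees $(T,\nu)$ whose underlying quotient is $t$ (equivalently, trees $T$ equipped with a surjection $T\twoheadrightarrow t$ contracting each nest to a vertex of $t$), and where $\Phi_{(T,\nu)}\colon\Cc\to T(\Cc)$ is the iterated decomposition built from the original family $\{\Delta_{t'}\}$ following the nesting, with the homotopy $h$ inserted after each intermediate decomposition internal to a nest. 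The summand indexed by the trivial refinement $T=t$ is the naive transfer $t(p)\circ\Delta_t\circ i$, so the resulting structure does indeed extend the latter, as required.

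Second, I would verify that $\{\Delta_t^{\mathcal{H}}\}$ satisfies the relations of Proposition~\ref{prop:EquivDefHC}. Evaluating the obstruction produces a double sum over pairs of nested refinements; using the coassociativity relations satisfied by $\{\Delta_t\}$ together with the side identity $ip-\id_\Cc=d_\Cc h+hd_\Cc$, each failure can be paired with a term where the $\id_\Cc$-piece has been replaced by $ip$, so that a telescoping cancellation holds. The $\infty$-morphism extending $p$ is then defined analogously:
\[
p_t\ :=\ \sum_{(T,\,\nu)}\ \pm\, \Phi_{(T,\nu)}\circ i,
\]
(no final $t(p)$ at the leaves), and its morphism property follows by the same identity. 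Since $p_|=p$ is a quasi-isomorphism by hypothesis, $p_\infty$ is an $\infty$-quasi-isomorphism.

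The main technical obstacle is the combinatorics and signs in the telescoping argument. A more conceptual route, which I would likely present alongside the tree formula, is to interpret the whole statement through the bar-cobar adjunction. The homotopy cooperad $\Cc$ is equivalent to the augmented dg operad $(\free(s^{-1}\Cc),d)$; the retract $(i,p,h)$ on $\Cc$ extends to a retract of chain complexes between $\free(s^{-1}\Cc)$ and $\free(s^{-1}\mathcal{H})$ once one adds the terms of the tree formula as a perturbation, and the homological perturbation lemma produces the transferred differential together with a chain morphism. The only substantive point to check is that this perturbed differential is indeed a \emph{derivation} (so that it comes from a homotopy cooperad structure) and that the perturbed morphism is a morphism of \emph{operads} (so that it is an $\infty$-morphism of homotopy cooperads); both properties reduce to the fact that the tree-formula above is multiplicative with respect to grafting of trees, which is built into the construction.
\end{sketch}
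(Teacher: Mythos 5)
Your construction of the transferred decomposition maps is, in substance, the one the paper uses: a sum over all ways of writing $t$ as an iterated substitution $t=(((t_1\circ_{j_1}t_2)\circ_{j_2}t_3)\cdots)\circ_{j_k}t_{k+1}$, applying the original $\Delta_{t_i}$ successively with the homotopy $h$ inserted between consecutive decompositions, precomposed with $i$ and postcomposed with $t(p)$ on every vertex; the paper then observes these are the exact duals of Gran{\aa}ker's formulae for dg (pr)operads and leaves the verification at essentially the level of detail you give. One caveat on your indexing: you sum over ``nested trees $(T,\nu)$ with a surjection $T\twoheadrightarrow t$,'' which makes $T$ \emph{finer} than $t$, so $t(p)\circ\Phi_{(T,\nu)}$ is ill-typed for $T\neq t$ (there is no operadic contraction available in a cooperad to pass from $T(\Cc)$ to $t(\Cc)$). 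The nesting should be placed on $t$ itself, with the $\Delta_{t_i}$ attached to the successive coarsenings, so that $\Phi_\nu$ lands in $t(\Cc)$; this is clearly what you intend, and it is recoverable, but as written only the trivial summand parses.

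The genuine error is in the $\infty$-morphism. Setting $p_t:=\sum\pm\,\Phi_{(T,\nu)}\circ i$ ``with no final $t(p)$'' produces a map $\mathcal{H}\to t(\Cc)$, which has the wrong source and the wrong target: an $\infty$-morphism extending $p$ must consist of components $p_t:\Cc\to t(\mathcal{H})$ whose corolla part is $p$. What you have written is (at best) a candidate for an extension of $i$ in the opposite direction, which is not what the theorem asserts. The correct formula keeps $t(p)$ on all output vertices and replaces the \emph{initial} $i$ by the homotopy $h$, i.e.\ $p_t=\sum\pm\, t(p)\circ\bigl((\Delta_{t_{k+1}}h)\circ_{j_k}(\cdots((\Delta_{t_2}h)\circ_{j_1}\Delta_{t_1}))\bigr)\circ h$. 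Finally, a word of caution on your proposed ``conceptual route'': the basic perturbation lemma applied to $\free(s^{-1}\Cc)$ does not automatically return a \emph{derivation}, and that is precisely the non-trivial point — asserting it is ``built into the construction'' begs the question. The paper does not take that route; it relies on the explicit tree formulae and the dual of Gran{\aa}ker's argument, and elsewhere explicitly notes that perturbation-lemma methods fail for the related transfer problems in this setting.
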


\begin{proof}
For any corolla $t$, the transferred structure map $\widetilde{\Delta}_t$ on $\mathcal{H}$ is given by the differential $d_\mathcal{H}$. For any tree $t\in \Tree$ with at least $2$ vertices, we consider all the possible ways of writting it by successive substitutions of trees with at least $2$ vertices: 
$$t=(((t_1\circ_{j_1} t_2)\circ_{j_2} t_3) \cdots )\circ_{j_k} t_{k+1}    \ .$$ 
The transferred structure map $\widetilde{\Delta}_t :  \mathcal{H}\to t(\mathcal{H})$ is then given  by 
$$
\widetilde{\Delta}_t:=\sum \pm\,   t(p)\circ \big( 
(\Delta_{t_{k+1}} h)     \circ_{j_k}   (\cdots   (\Delta_{t_3} h)     \circ_{j_2}  ((\Delta_{t_2} h)  \circ_{j_1} \Delta_{t_1} )   ) 
     \big)\circ i \ , $$
where the notation $(\Delta_{t'} h)  \circ_j \Delta_{t}$ means here the composite of $\Delta_t$ with $\Delta_{t'} h$ at the $j^\textrm{th}$ vertex of the tree $t$. 

The extension of the map $p :  \Cc\to \mathcal{H}$ into an $\infty$-morphism $p_\infty :  \Cc\to \free(\mathcal{H})$ is given by the same kind of formula. On corollas, it is given by the map $p$, and for any tree $t\in \Tree^{(\ge 2)}$ with at least $2$ vertices, it is given by 
$$
p_t:=\sum \pm\,   t(p)\circ \big( 
(\Delta_{t_{k+1}} h)     \circ_{j_k}   (\cdots   (\Delta_{t_3} h)     \circ_{j_2}  ((\Delta_{t_2} h)  \circ_{j_1} \Delta_{t_1} )   ) 
     \big)\circ h \ . $$

When $\Cc$ is a dg cooperad, these formulae are the exact duals to the ones  given by \cite{Granaker07} for dg (pr)operads. The rest of the proof is straightforward, following the ideas of  loc. cit.  
\end{proof}

\subsection{The homotopy cooperad structure on $H(\B\,  \BV)$}
Let us denote the graded $\Sy$-module 
$$\mathcal{H}:=\overline{T}^c(\delta) \oplus S^{-1}\overline{Grav}^* \cong H_\bullet (\overline{\B}\,  \BV)\cong H_\bullet(\overline{\qBV}^\antishriek, d_\varphi)\ .$$

Theorem~\ref{thm:MainDefRetract} provides us with the following deformation retract in the category of dg $\Sy$-modules:
\begin{eqnarray*}
\xymatrix{     *{ \qquad \qquad\qquad\qquad\qquad \qquad\qquad\quad 
({\qBV}^\antishriek\cong{T}^c(\delta)\otimes{\G}^\antishriek, d_\varphi\cong \delta^{-1}\otimes d_\psi)   \ } \ar@(dl,ul)[]^{h:=\delta\otimes H}\ \ar@<1ex>[r]^(0.77){p} & *{\
(\mathcal{H} \oplus \I, 0) \ . \quad \ \  \ \quad }  \ar@<1ex>[l]^(0.23){i}} &  & \qquad 
\end{eqnarray*}

\begin{cor}\label{cor:HomoCoopH}
The graded $\Sy$-module ${\cH}:=\overline{T}^c(\delta) \oplus S^{-1} \overline{Grav}^*$ is endowed with a homotopy cooperad structure and with an $\infty$-quasi-isomorphism from  the dg cooperad ${\BV}^{\ac}=({\qBV}^{\ac}, d_\varphi)$.
\end{cor}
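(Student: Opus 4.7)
The plan is to combine the two main ingredients developed so far in the paper: the explicit deformation retract of Theorem~\ref{thm:MainDefRetract} and the homotopy transfer theorem for homotopy cooperads (Theorem~\ref{thm:HTTCoop}). The overall strategy is essentially formal, once one observes that a dg cooperad is a special instance of a homotopy cooperad (the case where only the corolla-indexed and binary-tree-indexed structure maps $\Delta_t$ are non-zero).

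First I would note that the dg cooperad structure on $\BV^{\ac}=(\qBV^{\ac}, d_\varphi)$ defines a homotopy cooperad structure on its augmentation coideal $\overline{\qBV}^{\ac}$: the underlying $\Sy$-module map $d_\Cc$ is $d_\varphi$, and the only non-trivial higher maps $\Delta_t$ are those indexed by the trivial tree (giving $d_\varphi$) and by two-vertex trees (giving the infinitesimal decomposition of the cooperad $\qBV^{\ac}$). Theorem~\ref{thm:MainDefRetract} gives a deformation retract of dg $\Sy$-modules
\begin{eqnarray*}
\xymatrix@C=18pt{(\overline{\qBV}^{\ac}, d_\varphi)\ \ar@(dl,ul)[]^{h}\ \ar@<0.5ex>[r]^-{p} & (\mathcal{H}, 0) \ar@<0.5ex>[l]^-{i}},
\end{eqnarray*}
with $\mathcal{H}=\overline{T}^c(\delta)\oplus\mathcal{S}^{-1}\overline{Grav}^{\,*}$ and contracting homotopy $h=\delta\otimes H$.

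Next I would apply Theorem~\ref{thm:HTTCoop} verbatim to this retract. The theorem produces, out of the maps $i$, $p$, $h$ and the structure maps $\Delta_t$ on $\overline{\qBV}^{\ac}$, a family of morphisms $\widetilde{\Delta}_t\colon \mathcal{H}\to t(\mathcal{H})$ indexed by trees $t\in\Tree$, satisfying the axioms of Proposition~\ref{prop:EquivDefHC}, as well as an $\infty$-morphism $p_\infty\colon \overline{\qBV}^{\ac}\rightsquigarrow \mathcal{H}$ extending $p$. Because the transferred differential on corollas is $t(p)\circ d_\varphi\circ i=0$ (since $p\circ d_\varphi=0$ on cycles and by Theorem~\ref{thm:MainDefRetract} the homology of $(\overline{\qBV}^{\ac},d_\varphi)$ is $\mathcal{H}$), the linear term of $\widetilde{\Delta}$ vanishes, which is consistent with $\mathcal{H}$ being seen as a chain complex with zero differential.

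Finally, since the arity-wise components of $p_\infty$ start with the quasi-isomorphism $p$, this $\infty$-morphism is an $\infty$-quasi-isomorphism in the sense recalled after Proposition~\ref{prop:EquivDefHC}. There is essentially no obstacle in this proof, since all the work has already been done: the main input Theorem~\ref{thm:MainDefRetract} is the non-trivial computational ingredient, and Theorem~\ref{thm:HTTCoop} is a purely formal transfer machine. The only thing worth checking carefully is that the maps $\widetilde{\Delta}_t$ on a given element of $\mathcal{H}$ are supported on only finitely many trees $t$; this follows from the fact that the homotopy cooperad structure $\Delta$ on $\overline{\qBV}^{\ac}$ has this local-finiteness property (only trees with two vertices contribute), so the finite-sum formula for $\widetilde{\Delta}_t$ displayed in the proof of Theorem~\ref{thm:HTTCoop} involves only finitely many non-zero summands.
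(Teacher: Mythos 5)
Your proposal is correct and follows exactly the paper's own argument: the paper's proof is the one-line observation that the corollary is a direct application of the homotopy transfer theorem for homotopy cooperads (Theorem~\ref{thm:HTTCoop}) to the deformation retract of Theorem~\ref{thm:MainDefRetract}, with the dg cooperad $\BV^{\ac}$ viewed as a homotopy cooperad concentrated on corollas and two-vertex trees. Your additional checks (vanishing of the transferred linear term, local finiteness, and that $p_\infty$ is an $\infty$-quasi-isomorphism because its corolla component is $p$) are all consistent with, and implicit in, the paper's treatment.
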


\begin{proof}
This is a direct application of the Homotopy Transfer Theorem~\ref{thm:HTTCoop} for homotopy cooperads.
\end{proof}
 
\subsection{The minimal model of the operad $\BV$}

\begin{defi} A \emph{minimal operad} is a quasi-free dg operad $(\free(X), d)$
\begin{itemize}

\item[$\diamond$] with a decomposable differential, that is  $d : X \to \free^{(\ge 2)}(X)$, and 

\item[$\diamond$] such that the generating degree graded $\Sy$-module admits a decomposition into $X=\bigoplus_{k\ge 1} X^{(k)}$ satisfying $d(X^{(k+1)})\subset \free(\bigoplus_{i=1}^k X^{(i)})$.
\end{itemize}

A \emph{minimal model} of a dg operad $\Po$ is the data of a minimal operad $(\free(X), d)$ together with a quasi-isomorphism of dg operads $\xymatrix@C=18pt{(\free(X), d) \ar@{->>}[r]^(0.6)\sim& \Po}$, which is an epimorphism. (This last condition is always satisfied when the differential of $\Po$ is trivial). 
\end{defi}

The generalization of the notion of a minimal model  from dg commutative algebras 
\cite{DGMS75, Sullivan77} to dg operads was initiated by M. Markl in \cite{Markl96bis}, see also \cite[Section~$\textrm{II}.3.10$]{MSS}. Notice however that the aforementioned definition is strictly more general than loc. cit. 
and includes the crucial case of dg associative algebras, since we do not require that  $X(1)=0$ here. (A minimal operad in the sense of Markl is minimal in the present sense: the extra grading is given by the arity grading $X^{(k)} := X(k+1)$).
The present definition faithfully follows Sullivan's ideas: the increasing filtration $F_k:=\bigoplus_{i=1}^k X^{(i)}$ is the \emph{Sullivan triangulation} assumption. The extra grading $X^{(k)}$ is called the \emph{syzygy} degree. Notice that any non-negatively graded quasi-free operad with decomposable differential is minimal; one only has to
consider $X^{(k)} := X_{k-1}$.

The following lemma compares the two approaches of Quillen (cofibrant) and Sullivan (minimal) of homotopical algebra. 

\begin{lemma}
A minimal operad is cofibant in the model category given by V. Hinich \cite{Hinich97}.
\end{lemma}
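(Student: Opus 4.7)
The plan is to exhibit a minimal operad $(\free(X),d)$ as an $I$-cell complex for Hinich's set $I$ of generating cofibrations; since cell complexes are cofibrant, this yields the result. First I would recall Hinich's model structure from \cite{Hinich97}: on dg operads over the characteristic zero field $\KK$, weak equivalences are arity-wise quasi-isomorphisms and fibrations are arity-wise surjections, while the (trivial) cofibrations are generated by the image under the free operad functor $\free(-)$ of the (trivial) cofibrations of dg $\Sy$-modules. Because $\KK$ has characteristic zero, every $\Sy_r$-module is projective, and one may take the generating cofibrations of dg $\Sy$-modules to be the sphere-to-disk inclusions $V[n-1] \hookrightarrow V[n-1] \oplus V[n]$ with zero-to-identity differential, for $V$ any $\Sy_r$-module and $n \in \ZZ$.

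Using the Sullivan triangulation $X = \bigoplus_{k\ge 1} X^{(k)}$, I would then build $(\free(X),d)$ as a transfinite composition of pushouts of such generating cofibrations. Setting $F_k X := \bigoplus_{i=1}^k X^{(i)}$, $P_0 := \I$, and $P_k := (\free(F_k X),d)$, the minimality condition $d(X^{(k+1)}) \subset \free(F_k X)$ means that $d|_{X^{(k+1)}}$ is a morphism of graded $\Sy$-modules $X^{(k+1)} \to P_k$ lowering degree by one, equivalently a morphism $X^{(k+1)}[-1] \to P_k$ of dg $\Sy$-modules (the source carrying zero differential). This datum assembles into the square
\begin{equation*}
\xymatrix{\free(X^{(k+1)}[-1]) \ar[r] \ar@{^{(}->}[d] & P_k \ar@{^{(}->}[d] \\ \free(X^{(k+1)}[-1] \oplus X^{(k+1)}) \ar[r] & P_{k+1}}
\end{equation*}
in dg operads, where the bottom-left object is the free operad on the mapping cone of the identity of $X^{(k+1)}[-1]$. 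I would then verify that this square is a pushout: the coequalizer identifies each sphere generator $s^{-1}x \in X^{(k+1)}[-1]$ with its preassigned image $dx \in P_k$, leaving the complementary copy of $X^{(k+1)}$ freely adjoined over $P_k$ with the prescribed differential, which exactly reproduces $(\free(F_{k+1}X),d) = P_{k+1}$. The left vertical map is obtained by applying $\free(-)$ to the cofibration of dg $\Sy$-modules $X^{(k+1)}[-1] \hookrightarrow X^{(k+1)}[-1] \oplus X^{(k+1)}$ (decomposed into its arity-pure pieces), and is therefore a cofibration of dg operads.

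Consequently each inclusion $P_k \hookrightarrow P_{k+1}$ is a cofibration, and $(\free(X),d) = \Colim_k P_k$ is a transfinite composition of pushouts of generating cofibrations, hence a standard cofibration in Hinich's model category. The main point requiring attention is the pushout verification above: it reduces to the universal property of $\free(-)$ combined with the elementary description of coproducts in quasi-free operads, the Sullivan hypothesis being precisely what allows this cellular attachment to proceed level by level. Everything else is formal from the standard machinery of cofibrantly generated model categories.
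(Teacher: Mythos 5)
Your argument is correct, and it is genuinely different in form from what the paper does: the paper's entire proof is the citation ``This is a particular case of \cite[Corollary~$40$]{MerkulovVallette09II}'', deferring the cellular argument to that reference, whereas you reconstruct the argument directly. Your route --- filtering by the Sullivan grading, realizing each stage $P_k=(\free(F_kX),d)\hookrightarrow P_{k+1}$ as a pushout of $\free$ applied to a sphere-to-disk inclusion of dg $\Sy$-modules, and using that $d^2=0$ together with $d(X^{(k+1)})\subset\free(F_kX)$ makes the attaching map $X^{(k+1)}[-1]\to P_k$ a chain map --- is exactly the standard cell-attachment proof that underlies the cited corollary, so the two approaches rest on the same idea; yours buys self-containedness at the cost of length, while the paper's buys brevity and also covers the properadic generality of \cite{MerkulovVallette09II}. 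Two small points worth tightening: strictly speaking a set of \emph{generating} cofibrations should range over a set of $\Sy_r$-modules (e.g.\ the regular representations $\KK[\Sy_r]$), so your attachments along arbitrary $V[n-1]\hookrightarrow V[n-1]\oplus V[n]$ are pushouts of cofibrations (retracts of generating ones, using characteristic zero) rather than literal $I$-cells --- this still yields cofibrancy of the transfinite composite, but the phrase ``$I$-cell complex'' is slightly loose; and at the base step one should note that the decomposability of $d$ forces $d(X^{(1)})=0$, so that $P_1=\free(X^{(1)})$ is attached to $P_0=\I$ along the zero map. Neither affects the validity of the proof.
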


\begin{proof}
This is a particular case of \cite[Corollary~$40$]{MerkulovVallette09II}.
\end{proof}

Since the definition is different, one needs a more general proof for the uniqueness of minimal models. 

\begin{prop}
Let $\Po$ be a dg operad. When it exists, the minimal model of the operad $\Po$ is unique up to isomorphism. 
\end{prop}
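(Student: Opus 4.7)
My plan is to adapt the classical two-step argument for the uniqueness of Sullivan minimal models to the operadic setting. First, given two minimal models $m_X: (\free(X), d_X) \twoheadrightarrow \Po$ and $m_Y: (\free(Y), d_Y) \twoheadrightarrow \Po$, both augmentations are surjective quasi-isomorphisms, hence acyclic fibrations in Hinich's model structure on dg operads. Since the preceding lemma guarantees that minimal operads are cofibrant, applying the left lifting property for the cofibration $\I \hookrightarrow (\free(X), d_X)$ against the trivial fibration $m_Y$ produces a morphism $\phi: (\free(X), d_X) \to (\free(Y), d_Y)$ over $\Po$, which by two-out-of-three is itself a quasi-isomorphism of dg operads.

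The second step is to promote $\phi$ into an isomorphism. The syzygy grading yields an exhaustive, bounded below filtration $F_k \free(X) := \free(\bigoplus_{i \le k} X^{(i)})$ by sub-dg operads, thanks precisely to the defining condition $d_X(X^{(k+1)}) \subset \free(F_k X)$, and similarly for $Y$. First I would arrange, by an inductive perturbation of $\phi$ within its homotopy class, that $\phi$ is compatible with these filtrations. On the associated graded, the decomposability of $d_X$ and $d_Y$ collapses the induced differential on the generators to zero, so that $\phi$ restricts to a chain map $X^{(k)} \to Y^{(k)}$ between graded $\Sy$-modules with vanishing differentials. Comparison of the two associated spectral sequences, which share the common abutment $H_\bullet(\Po)$, then forces each induced map $X^{(k)} \to Y^{(k)}$ to be an isomorphism of graded $\Sy$-modules. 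Lifting back to the quasi-free operads by induction on the syzygy degree upgrades $\phi$ into an isomorphism of dg operads.

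The main obstacle I foresee is the filtration-preserving correction of $\phi$. In Sullivan's original setting for dg commutative algebras, generators sit in strictly positive cohomological degree and one inducts on this degree; in the operadic framework considered here, the definition explicitly allows unary generators in degree zero, so the induction must be carried out with respect to the separate external syzygy grading. Carrying out the correction step requires, at each stage $k \to k+1$, solving an obstruction equation by a derivation-valued homotopy supported in syzygy degree $k+1$; the existence of such a correction should follow from the acyclicity of the path object for quasi-free operads combined with the minimality hypothesis, playing the role that Sullivan's acyclic models lemma plays in the commutative algebra case.
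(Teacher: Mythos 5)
Your first step coincides with the paper's: both minimal models are cofibrant by the preceding lemma, the augmentation $(\free(Y),d_Y)\twoheadrightarrow\Po$ is a trivial fibration in Hinich's model structure because it is a surjective quasi-isomorphism, and the lifting property produces a quasi-isomorphism $\phi\colon(\free(X),d_X)\to(\free(Y),d_Y)$ of dg operads over $\Po$.

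The second step is where your plan has a genuine gap. The deduction ``the two spectral sequences share the common abutment $H_\bullet(\Po)$, hence each induced map $X^{(k)}\to Y^{(k)}$ is an isomorphism'' runs in the wrong direction: a morphism of convergent spectral sequences that is an isomorphism on the abutment need not be an isomorphism on any earlier page. For the syzygy filtration this is exactly the dangerous case, since the higher differentials $d_r$, $r\ge 1$, are precisely the (nonzero) decomposable components of the differential, so the page carrying the generators is in general much larger than $E_\infty$, and nothing about $X^{(k)}\to Y^{(k)}$ can be read off from the abutment alone. (A smaller issue: even a filtration-preserving $\phi$ sends $X^{(k)}$ into $\free\big(\bigoplus_{i\le k}Y^{(i)}\big)$, not into $Y^{(k)}$, so one must in any case project onto indecomposables before comparing generators.) The missing ingredient, which the paper supplies by citing \cite[Proposition~43]{MerkulovVallette09I}, is that a quasi-isomorphism of quasi-free dg operads induces a quasi-isomorphism between the spaces of generators equipped with the linear parts of the differentials. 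Minimality (decomposability) then forces these linear parts to vanish, so the induced map is an isomorphism of graded $\Sy$-modules $X\cong Y$, and $\phi$ itself is an isomorphism by triangularity with respect to the weight grading of the free operad. With that proposition in hand, neither the perturbation of $\phi$ to a filtration-compatible map nor the obstruction-theoretic correction you identify as the main obstacle is needed; as written, however, your step from common abutment to isomorphism on generators does not go through.
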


\begin{proof}
We work with the model category structure on dg operads defined by V. Hinich in \cite{Hinich97}. Let $\mathcal{M}$ and $\mathcal{M}'$ be  two minimal models of the graded operad $\Po$. They are cofibrant operads by the preceding proposition. Since  the quasi-isomorphism $\xymatrix@C=18pt{\mathcal{M}' \ar@{->>}[r]^(0.5)\sim& \Po}$  is an epimorphism, it is a trivial fibration. By the lifting property of a model category, there exists a quasi-isomorphism $f : \mathcal{M}=(\free(X), d)\qi \mathcal{M}'=(\free(X'), d')$ of dg operads. It induces a quasi-isomorphism of dg $\Sy$-modules between the space of generators $(X,d_X) \qi (X',d_{X'})$ by \cite[Proposition~$43$]{MerkulovVallette09I}. Since the differentials are decomposable, we get $d_X=0$ and $d_{X'}=0$. So the aforementioned quasi-isomorphism  is actually an isomorphism of graded $\Sy$-modules $X\cong X'$. Therefore, the map $f$ is an isomorphism of dg operads. 
\end{proof}

\begin{thm}
The data of Corollary~\ref{cor:HomoCoopH} provide us with the minimal model of the operad $\BV$:
$$  \big(\free(s^{-1}(\overline{T}^c(\delta) \oplus {\mathcal S}^{-1} \overline{Grav}^*)), {d}\big) \qi \BV  \ ,$$
where this quasi-isomorphism is  defined by $s^{-1}\delta \mapsto \Delta$ and by $s^{-1}\mu \mapsto \bullet$.
\end{thm}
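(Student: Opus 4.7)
The plan is to directly apply the homotopy transfer theorem together with standard model-categorical arguments. By Corollary~\ref{cor:HomoCoopH}, the graded $\Sy$-module $\mathcal{H}:=\overline{T}^c(\delta) \oplus \mathcal{S}^{-1}\overline{Grav}^*$ carries a homotopy cooperad structure, encoded as a square-zero derivation $d$ on $\free(s^{-1}\mathcal{H})$, together with an $\infty$-quasi-isomorphism $p_\infty : \BV^{\ac} \rightsquigarrow \mathcal{H}$.

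First I would verify that $(\free(s^{-1}\mathcal{H}), d)$ is a minimal operad. The projection of $d|_{s^{-1}\mathcal{H}}$ onto $s^{-1}\mathcal{H}$ corresponds, by Proposition~\ref{prop:EquivDefHC}, to the sum of the corolla components $\widetilde{\Delta}_t$ of the transferred homotopy cooperad structure. By the explicit formula in the proof of Theorem~\ref{thm:HTTCoop}, this sum is the transferred differential $d_\mathcal{H}$, which vanishes since $\mathcal{H}$ is given the zero differential in Theorem~\ref{thm:MainDefRetract}. Hence $d$ maps $s^{-1}\mathcal{H}$ into $\free^{(\ge 2)}(s^{-1}\mathcal{H})$ and is decomposable. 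For the Sullivan triangulation, I would observe that $s^{-1}\mathcal{H}$ is concentrated in non-negative homological degree: $s^{-1}\delta^m$ has degree $2m-1 \ge 1$ for $m\ge 1$, and $s^{-1}\bigl(\mathcal{S}^{-1}\overline{Grav}^*(n)\bigr)$ has degree $2n-4 \ge 0$ for $n \ge 2$. By the remark following the definition of minimality, $X^{(k)} := (s^{-1}\mathcal{H})_{k-1}$ then provides the required syzygy grading.

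Next, for the quasi-isomorphism to $\BV$, I would translate $p_\infty$ into the corresponding morphism of dg operads $\Omega p_\infty : \BVK = \Omega\, \BV^{\ac} \to (\free(s^{-1}\mathcal{H}), d)$ and argue that it is a quasi-isomorphism. This uses a weight-filtration spectral-sequence argument in the spirit of the end of the proof of Theorem~\ref{thm:HBV=Grav}: filtering both sides by the weight on $\BV^{\ac}$ (respectively by the induced filtration coming from the transferred structure), the $E^0$-term of the target computes the underlying $\Sy$-module of $\mathcal{H}$, on which the map $p$ is a quasi-isomorphism by the retract property. Since $(\free(s^{-1}\mathcal{H}), d)$ and $\BVK$ are both cofibrant in Hinich's model category, $\Omega p_\infty$ admits a homotopy inverse $\sigma : (\free(s^{-1}\mathcal{H}), d) \to \BVK$; composing with the Koszul resolution yields a quasi-isomorphism $(\free(s^{-1}\mathcal{H}), d) \to \BV$. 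On generators, the linear part of $\sigma$ coincides (up to homotopy) with the inclusion $i$ of Theorem~\ref{thm:MainDefRetract}: it sends $\delta \in \overline{T}^c(\delta)$ to $\delta \otimes \I \in \qBV^{\ac}$, which desuspends and maps via the Koszul resolution to $\Delta \in \BV$; the arity-$2$ generator $\mu$ of $\mathcal{S}^{-1}\overline{Grav}^*$, identified via Proposition~\ref{prop:getzler} with the class of $s\bullet \in \G^{\ac}(2)/\im d_\psi$, maps similarly to $\bullet \in \BV$.

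The main obstacle I anticipate is the verification that $\Omega p_\infty$ is a quasi-isomorphism: since the ``cobar'' construction is here applied to an $\infty$-morphism between homotopy cooperads rather than a strict morphism between strict cooperads, the usual preservation arguments must be adapted with care, ensuring convergence of the spectral sequences involved and a PBW-type identification of the $E^0$-terms.
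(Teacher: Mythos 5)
Your treatment of minimality and of the quasi-isomorphism $P\colon \Omega\,\BV^{\ac}\to(\free(s^{-1}\cH),d)$ is essentially the paper's: the paper also filters by the number of vertices of the underlying trees (rather than by the weight of $\BV^{\ac}$, but the effect is the same), identifies $E^0(P)$ with $\free(s^{-1}p)$, and concludes by K\"unneth and convergence. Where your argument genuinely diverges --- and where the gap lies --- is in the final step. The theorem does not merely assert the existence of \emph{some} quasi-isomorphism $(\free(s^{-1}\cH),d)\to\BV$; it asserts that the \emph{specific} map $F$ sending $s^{-1}\delta\mapsto\Delta$, $s^{-1}\mu\mapsto\bullet$ and all other generators to $0$ is one. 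Producing a homotopy inverse $\sigma$ of $\Omega p_\infty$ from cofibrancy only gives a map whose linear part agrees with $i$ ``up to homotopy''; this neither shows that $F$ is a well-defined morphism of dg operads nor that the composite $F\circ P$ equals (rather than is merely homotopic to) the Koszul resolution, which is what the two-out-of-three argument needs.

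Concretely, the missing content is the verification that $F$ commutes with the differentials, i.e.\ that $F(d(x))=0$ in $\BV$ for every generator $x$ of $\cH$ (recall $d_{\BV}=0$, so the composite of the images of the labels of each tree in $d(x)$ must be a relation of $\BV$). The paper does this by a counting argument: writing $h=\delta\otimes H$, it shows that the only elements of $\im Hd_\psi$ whose image under $d$ contains trees labeled solely by $\mu$ and $\delta$ live in arity $3$ (the weight bookkeeping forces $n-k=2n-3-k$, hence $n=3$), and then identifies the two arity-$3$ contributions with the seven-term relation $\Delta(-\bullet-\bullet-)+\cdots=0$ and with the associativity relation, both of which hold in $\BV$. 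Without this computation (or an equivalent one), the statement as formulated --- with its explicit description of the quasi-isomorphism on generators --- is not established. Your spectral-sequence and cofibrancy arguments are sound for existence, but you should replace the ``homotopy inverse'' step by the direct construction and verification of $F$, followed by the commutativity of the triangle $\Omega\,\BV^{\ac}\to\BV$ versus $\Omega\,\BV^{\ac}\xrightarrow{P}(\free(s^{-1}\cH),d)\xrightarrow{F}\BV$, checked on generators.
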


\begin{proof}
First, the quasi-free operad  $\big(\free(s^{-1}(\overline{T}^c(\delta) \oplus {\mathcal S}^{-1} \overline{Grav}^*)), d \big)$ is minimal since it is non-negatively graded with the decomposable differential coming from the transferred homotopy cooperad structure on $\overline{\cH}$. 

Then, the $\infty$-quasi-isomorphism $p_\infty : \overline{\qBV}^{\ac} \rightsquigarrow{\cH}$ 
 of Corollary~\ref{cor:HomoCoopH} induces a morphism of dg operads 
$P : \Omega \, \BV^{\ac}  \to (\free(s^{-1}{\cH}),d)$. It is a quasi-isomorphism by the following argument. 
We consider the filtration $F_\bullet$ on $\Omega\, \BV^{\ac}$, and respectively $F'_\bullet$ on $(\free(s^{-1}{\cH}),d)$, given by the number of vertices of the underlying tree:
$$F_{-k}:= \bigoplus_{t \in \Tree^{(\ge k)}} t(s^{-1}\overline{\qBV}^{\ac})\quad \textrm{and} \quad 
F'_{-k}:= \bigoplus_{t \in \Tree^{(\ge k)}} t(s^{-1}{\cH})
 Ê\ .    $$
The first terms of the respective associated spectral sequences are $(E^0,d^0) \cong (\free(s^{-1} \overline{\qBV}^{\ac} ), d_1)$ and $(E'^{0}, d'^0)\cong (\free(s^{-1}{\cH}), 0)$. The morphism of dg operads $P$ preserves the aforementioned filtrations. Moreover, it satisfies $E^0(P)=\free({s^{-1}p})$. So it  is a quasi-isomorphism by the K\"unneth formula. The two filtrations are obviously exhaustive. At fixed arity, they are bounded below: for a fixed degree, the number of vertices is limited since the generator of arity one have degree greater or equal to $1$. 
We conclude the argument by means of the classical convergence theorem
for spectral sequences \cite[Chapter~$11$]{MacLane95}.

Finally, we define a morphism of operads $F : \free(s^{-1}{\cH}) \to \BV$  by 
$$s^{-1}\delta \mapsto \Delta, \quad  s^{-1}{\mathcal S}^{-1} \overline{Grav}^*(2) \cong s^{-1} \im H d_\psi (2) \cong \KK s^{-1} \mu \to  \KK \bullet \ ,  $$ 
and the rest being sent to $0$. We now check the commutativity of the differentials on the generators. It is straightforward on $s^{-1} \delta^m$. 

The only elements of $\im Hd_\psi$ whose image under $d$ are trees
with vertices labeled only by $\mu$ and $\delta$ are in $\im
Hd_\psi(3)$. Indeed, let $t$ be an element of $\im Hd_\psi(n)$, which
is the sum of trees with $k$ vertices labeled by $\mu$ and with
$n-1-k$ vertices labeled by $\beta$. To get trees labeled only by
$\mu$ and $\delta$, one has to apply $h=\delta\otimes H$ a total of
$n-1-k$ times. This introduces the $n-1-k$ power of $\delta$ and
applies the coproduct of the cooperad $T^c(\delta)\otimes \G^{\ac}$ a
total of $n-k$ times. In the end, we get trees labeled by $n-1$ copies
of $\mu$ and $n-1-k$ copies of $\delta$ split $n-k$ times. To get
totally split trees, we should have $n-k=2n-3-k$, which implies $n=3$.

The one-dimensional  space $ Lie_1^{\ac}(3)$, generated by the Jacobi relation, lives in $\im d_\psi=\Ker d_\psi$. The image under $d$ of the corresponding element in $H  Lie_1^{\ac}(3)$ is a sum of $7$ trees with $3$ vertices ($d_3$), whose image in the operad $\BV$ is the $7$-term relation 
$$ \Delta(\textrm{-} \bullet \textrm{-} \bullet \textrm{-})  + 
( \Delta(\textrm{-} \bullet \textrm{-}) \bullet \textrm{-}).(\id + (123) + (321))+
( \Delta(\textrm{-}) \bullet \textrm{-} \bullet \textrm{-}).(\id + (123) + (321))=0
 \ , $$
 which is a consequence of the definition of the operad $\BV$.
The two-dimensional space $Com^{\ac}(3)$ is generated by (the suspension of) the associators of $\bullet$. The composite $H d_\psi$ acts on it as the identity. Its image under $d$ is equal to $d_2$, which produces the associativity relation in the operad $\BV$. So the map $F : (\free(s^{-1}{\cH}),d) \to \BV$ is a 
morphism of dg operads. 

It remains to show that the following diagram is commutative
$$\xymatrix{\Omega \BV^{\ac} \ar[rr]^{\sim} \ar[rd]^{\sim}_P & & \BV \\
& (\free(s^{-1}{\cH}),d) \ , \ar[ur]_F   &} $$
to conclude that  $F$ is a quasi-isomorphism. It is enough to check it on the generators, which is equivalent to the commutativity of  the following diagram 
$$\xymatrix{ (\qBV^{\ac}, d_\varphi) \ar[r]^{\kappa} \ar[d]_{p_\infty}  & \BV \\
 (\free(s^{-1}{\cH}),d)  \ar@{->>}[r]   &   (\free(\KK s^{-1} \delta \oplus \KK  s^{-1}\mu),0)  \ar[u]_F     \ .    } $$
It is easily checked on $\delta$, $\mu$, and $\beta$. Both maps vanish on the rest of $\qBV^{\ac}$ by the same arguments as above: 
the only element which produces a non-trivial element in $\free(\KK s^{-1} \delta \oplus \KK  s^{-1}\mu)$ under $p_\infty$ is $\beta$, which concludes the proof. 
\end{proof}

We denote by 
$ d_n : s^{-1}{\cH} \to \free(s^{-1}{\cH})^{(n)}$ the part of the differential $d$, which splits elements into $n$ pieces. The component $d_2$ coincides with the decomposition map  on the cooperad $S^{-1}\overline{\Grav}^*$.

\begin{prop}\label{prop:Shapeofd}
The differential of the minimal model of the operad $\BV$ has the following shape: 
\begin{eqnarray*}
d_2 : s^{-1}\delta^m \mapsto \sum_{m_1+m_2=m}  s^{-1}\delta^{m_1} \otimes s^{-1}\delta^{m_2} 
\quad \textrm{and} \quad d_n : s^{-1}\delta^m \mapsto 0, \ \textrm{for} \ n\ge 3. 
\end{eqnarray*}
Up to the desuspension $s^{-1}$, the image of an element of degree $k$ of  ${\mathcal S}^{-1}{\overline{Grav}_k}^*$ under the map $d_n$ is a sum of trees with $n$ vertices 
labeled by elements of ${\mathcal S}^{-1}\overline{Grav}^*$ and of $\overline{T}^c(\delta)$, such that the total degree of the elements from 
${\mathcal S}^{-1}\overline{Grav}^*$ is equal to $k-n+2$ and such that the total weight, i.e. the total power, of the elements coming from $\overline{T}^c(\delta)$ is equal to $n-2$. For instance, this induces  
$$d_n({\mathcal S}^{-1}{\overline{Grav}_k}^*)=0 \quad \textrm{for}\quad  n>k+2 \ .$$
\end{prop}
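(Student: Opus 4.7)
The plan is to apply the homotopy transfer theorem for homotopy cooperads (Theorem~\ref{thm:HTTCoop}) to the deformation retract of Theorem~\ref{thm:MainDefRetract}, and to read off each component $d_n$ from the transferred structure maps $\widetilde{\Delta}_t$ as $t$ ranges over trees with $n$ vertices. Two observations drive the analysis: the chain homotopy $h=\delta\otimes H$ vanishes on any tensor $\delta^k\otimes \mathrm{I}$ (since $H(\mathrm{I})=0$ by Lemma~\ref{lemma:Hcontracts}), and the projection $p$ annihilates any tensor $\delta^m\otimes v$ for which both $m>0$ and $v\ne \mathrm{I}$.

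For the generators $s^{-1}\delta^m$, the inclusion gives $i(\delta^m)=\delta^m\otimes\mathrm{I}\in \Ker d_\varphi$, and the first infinitesimal coproduct $\Delta_{t_1}$ produces the two-vertex linear tree with vertices $\delta^{m_1}\otimes\mathrm{I}$ and $\delta^{m_2}\otimes\mathrm{I}$, $m_1+m_2=m$. Vertex-wise projection by $p$ yields the stated formula for $d_2$. For $n\ge 3$ the transfer formula requires at least one further application of $h=\delta\otimes H$ to an intermediate vertex, but every such vertex has the form $\delta^k\otimes\mathrm{I}$ and is therefore killed by $h$; hence $d_n(s^{-1}\delta^m)=0$.

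For the generators $\omega\in{\mathcal S}^{-1}\overline{Grav}_k^*$, I would carry out a $\delta$-weight count. Here $i(\omega)\in 1\otimes\G^{\ac}$ has $\delta$-weight $0$. Inside the transfer formula, the infinitesimal coproduct on $\qBV^{\ac}\cong T^c(\delta)\otimes\G^{\ac}$ preserves the total $\delta$-weight, while each factor $h=\delta\otimes H$ raises it by one. An $n$-vertex tree presented as an iterated substitution of $n-1$ two-vertex trees involves exactly $n-1$ applications of $\Delta$ and $n-2$ applications of $h$, so the total $\delta$-weight carried by the final decorated tree is $n-2$. Because $p$ kills mixed vertices, each surviving term has each vertex pure, namely either a $\delta^{m_i}\in \overline{T}^c(\delta)$ or a weight-zero element of ${\mathcal S}^{-1}\overline{Grav}^*$, and the sum $M$ of the $\delta$-weights on vertices of the first kind therefore equals $n-2$.

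The degree identity $K=k-n+2$ is then forced by balancing total degrees: since $|d|=-1$ and $|s^{-1}\omega|=k-1$, the image $d_n(s^{-1}\omega)$ lies in degree $k-2$, while an $n$-vertex tree with $a$ pure $s^{-1}\delta^{m_i}$-vertices and $b=n-a$ pure $s^{-1}x_j$-vertices has total degree $\sum (2m_i-1)+\sum (|x_j|-1)=2M-a+K-b=K+n-4$; equating yields $K=k-n+2$. The vanishing $d_n=0$ for $n>k+2$ follows because ${\mathcal S}^{-1}\overline{Grav}^*\cong \G^{\ac}/\mathrm{im}\, d_\psi$ is non-negatively graded (being a quotient of the non-negatively graded cooperad $\G^{\ac}$), so $K<0$ cannot be achieved. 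The main bookkeeping obstacle is to verify carefully that the infinitesimal decomposition of $\qBV^{\ac}$ indeed preserves the $\delta$-weight and that $p$ really has the precise vanishing described above; once these are in place the proposition follows immediately from the transfer formula.
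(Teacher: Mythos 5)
Your proposal is correct and matches the paper's approach: the paper's own proof of Proposition~\ref{prop:Shapeofd} is the single line ``by direct inspection of the various formulae,'' and your argument is exactly that inspection carried out — tracking the $\delta$-weight through the transfer formula of Theorem~\ref{thm:HTTCoop} ($n-2$ applications of $h=\delta\otimes H$, weight-preservation by the infinitesimal decomposition, vanishing of $h$ on $\delta^k\otimes\mathrm{I}$ and of $p$ on mixed tensors) and then balancing degrees. The degree bookkeeping ($K=k-n+2$) and the positivity of the grading on $\overline{\G^{\antishriek}}/\im d_\psi$ used for the final vanishing are both sound.
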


\begin{proof}
By direct inspection of the various formulae.
\end{proof}

We denote the minimal model of the operad $\BV$ by 
$$BV_\infty:=\big(\free(s^{-1}(\overline{T}^c(\delta) \oplus {\mathcal S}^{-1} \overline{Grav}^*)), {d}\big)\ .$$

\begin{remarks}$Ê\ $

\begin{itemize}
\item[$\diamond$] The results about TCFT, two-fold loop spaces, and the cyclic Deligne conjecture, obtained in \cite{GCTV09} using the cofibrance property of the Koszul resolution of the operad $\BV$ hold as well with this minimal model. The proof of the Lian-Zuckerman conjecture with this minimal model requires further work and will be the subject of another paper.
\item[$\diamond$] The same method can be applied to \cite{HoefelLivernet11} to make explicit the minimal model of the inhomogeneous quadratic operad $H_0(\mathcal S \mathcal C)$ , where the operad $\mathcal S \mathcal C$ is Kontsevich Swiss-cheese operad. 
\end{itemize}
\end{remarks}

\section{Skeletal homotopy BV-algebras}

We call algebras over the minimal model of the operad $\BV$ skeletal homotopy
BV-algebras. We make this notion explicit and we give a description in terms of Maurer-Cartan elements in a homotopy Lie algebra. 

\subsection{Second definition of homotopy BV-algebras}

\begin{defi}
A \emph{skeletal homotopy Batalin-Vilkovisky algebra} is an algebra over the minimal operad $BV_\infty$. 
\end{defi}

Recall that a skeletal homotopy BV-algebra structure on a dg module $(A,d_A)$ is the datum of a morphism of dg operads $BV_\inftyÊ\to \End_A$. The differential $\partial_A$ of the operad $\End_A$ is equal to 
$\partial_A(f):=d_A \circ f - (-1)^{|f|}\sum_{i=1}^n f\circ_i d_A$. We denote $\widetilde{\mu}$ the image of an element $\mu$ of $BV_\infty$ into $\End_A$.

\begin{prop}\label{prop:RedHoBValg}
A skeletal homotopy BV-algebra  is a chain complex $(A,d_A)$ endowed with operations 
$$ \Delta^m : A \to A, \  \text{of degree}\ 2m-1,\ \text{for}\ m\ge 1, $$
and 
$$  \widetilde{\mu} : A^{\otimes n} \to A, \ \text{of degree}\  |\mu|+n-2,
 \ \text{for any} \  \mu \in \overline{Grav}^*(n), $$ 
such that 
$$\partial_A(\Delta^m)=\sum_{m_1+m_2=m} \Delta^{m_1} \circ \Delta^{m_2}, \ 
\text{for} \ m\ge 1, $$
and
$$    \partial_A(\widetilde{\mu})=\sum \pm \, \widetilde{\mu}_1 \circ_i \widetilde{\mu}_2 +  \sum   t(\widetilde{\nu}_1, \ldots, \widetilde{\nu}_k, \underbrace{\Delta^{m_1}, \ldots, \Delta^{m_l}}_{\ge 1}) \ ,  $$
where the first sum runs over the decomposition product of the cooperad structure on $Grav^*$, $\Delta_{Grav^*}(\mu)=\sum  {\mu_1} \circ_i {\mu_2}$, and where the second sum corresponds to composites of at least three operations with at least one $\Delta^m$. 
\end{prop}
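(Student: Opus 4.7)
The plan is to unpack the definition of an algebra over the minimal operad $BV_\infty = \big(\free(s^{-1}\overline{\cH}), d\big)$ as a morphism of dg operads $\rho : BV_\infty \to \End_A$. Since the source is quasi-free, such a morphism is equivalent by adjunction to a morphism of graded $\Sy$-modules on the space of generators $s^{-1}\overline{\cH} = s^{-1}\overline{T}^c(\delta) \oplus s^{-1}\mathcal{S}^{-1}\overline{Grav}^*$, subject to the compatibility $\rho \circ d = \partial_A \circ \rho$ on generators.

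The first step is to identify the generating operations and their degrees. The generators $s^{-1}\delta^m$ for $m \ge 1$ are unary of degree $2m - 1$ (since $\delta$ has degree $2$ and $s^{-1}$ shifts by $-1$), and $\rho$ sends them to the unary operators $\Delta^m$. The generators $s^{-1}\mu$ for $\mu \in \overline{Grav}^*(n)$ are $n$-ary of the stated degree $|\mu| + n - 2$, obtained by combining the operadic desuspension $\mathcal{S}^{-1}$ and the homological desuspension $s^{-1}$, and $\rho$ sends them to the multilinear operations $\widetilde{\mu}$.

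The second step is to translate the compatibility condition using the explicit shape of $d$ given by Proposition~\ref{prop:Shapeofd}. Restricted to $s^{-1}\delta^m$, the differential is purely quadratic, $d(s^{-1}\delta^m) = d_2(s^{-1}\delta^m) = \sum_{m_1+m_2=m} s^{-1}\delta^{m_1} \circ_1 s^{-1}\delta^{m_2}$ (up to sign), and evaluating $\partial_A \circ \rho = \rho \circ d$ on $s^{-1}\delta^m$ yields the stated identity on the $\Delta^m$. For the generator $s^{-1}\mu$, the differential splits as $d_2 + \sum_{n \ge 3} d_n$: the $d_2$ component is the cooperad decomposition on $\mathcal{S}^{-1}\overline{Grav}^*$ of Proposition~\ref{prop:getzler} and contributes $\sum \pm \widetilde{\mu_1} \circ_i \widetilde{\mu_2}$; the higher components $d_n$ produce trees with $n \ge 3$ vertices decorated by elements of both $\mathcal{S}^{-1}\overline{Grav}^*$ and $\overline{T}^c(\delta)$.

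The main technical point is the final clause, namely that the $d_n$-part of the differential on a $\overline{Grav}^*$-generator always involves at least one $\delta$-factor whenever $n \ge 3$. This follows directly from Proposition~\ref{prop:Shapeofd}: each summand of $d_n$ applied to such a generator has total $\overline{T}^c(\delta)$-weight equal to $n - 2 \ge 1$, so every such tree must contain at least one vertex decorated by some $s^{-1}\delta^{m_i}$ with $m_i \ge 1$. Applying $\rho$ to this contribution yields precisely the second sum $\sum t(\widetilde{\nu_1}, \ldots, \widetilde{\nu_k}, \Delta^{m_1}, \ldots, \Delta^{m_l})$ with $l \ge 1$ and $k + l \ge 3$ in the statement of the proposition.
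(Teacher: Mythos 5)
Your proof is correct and follows the same route as the paper, which simply records this proposition as a direct corollary of Proposition~\ref{prop:Shapeofd}: you unpack the morphism of dg operads $BV_\infty \to \End_A$ on generators and read off the relations from the shape of the differential, exactly as intended. The only difference is that you spell out the degree bookkeeping and the weight argument for the $d_n$-terms ($n\ge 3$), which the paper leaves implicit.
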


\begin{proof}
This is a direct corollary of Proposition~\ref{prop:Shapeofd}.
\end{proof}

A BV-algebra is a skeletal homotopy BV-algebra with vanishing operations $\Delta^m$, for $m\ge 2$, and  
$\widetilde{\mu}$, for $\mu \in Grav^*(n)$, $n\ge 3$. The aforementioned quasi-isomorphism 
$$P:   \BV_\infty:=\Omega \BV^{\ac}   \xrightarrow{\sim}    BV_\infty:=(\free(s^{-1}{\cH}), d)$$ shows how  a skeletal homotopy BV-algebra carries  a  homotopy BV-algebra. Theorem~$4.7.4$ of \cite{Hinich97} implies the functor 
$$ P^* : \textsf{skeletal homotopy BV-algebras} \to \textsf{homotopy BV-algebras}$$
induces equivalences of the associated homotopy categories 
$$\mathsf{Ho}(\textsf{homotopy BV-algebras})\cong \mathsf{Ho}(\textsf{skeletal homotopy BV-algebras})
\cong \mathsf{Ho}(\textsf{BV-algebras}) \ . $$

\smallskip

Recall that a \emph{hypercommutative algebra} \cite{KontsevichManin94, Getzler95} is a chain complex  equipped with a totally symmetric $n$-ary operation $(x_{1}, \ldots, x_{n})$ of degree $2(n-2)$ for any $n\geq 2$, which satisfy 
 $$\sum_{S_{1}\sqcup S_{2}=\{1,\ldots , n\}} ((a, b, x_{S_{1}}), c, x_{S_{2}} )= \sum_{S_{1}\sqcup S_{2}=\{1,\ldots , n\}} (-1)^{|c||x_{S_{1}}|}(a, (b, x_{S_{1}}, c), x_{S_{2}})\ ,$$
for any $n\ge 0$. We denote the associated operad  by $HyperCom$. It is  isomorphic to the homology operad of  the Deligne-Mumford-Knudsen compactification of the moduli space of genus $0$ curves $H_\bullet(\overline{\mathcal{M}}_{0, n+1})$. It is Koszul dual to the operad Gravity: $HyperCom^!\cong Grav$.

\begin{prop}\label{prop:redBValg-Hypercom}
A skeletal homotopy BV-algebra with vanishing operators $\Delta^m$, for $m\ge 1$, is a homotopy hypercommutative algebra. 
\end{prop}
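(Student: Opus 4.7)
The plan is to recognize that setting $\Delta^m=0$ for all $m\ge 1$ projects the structure onto a well-known sub-structure, namely a homotopy hypercommutative algebra, by means of the Koszul duality between $HyperCom$ and $Grav$.

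First, I would encode the data ``skeletal homotopy BV-algebra with vanishing $\Delta^m$'' as a morphism of dg operads $BV_\infty \to \End_A$ whose restriction to the generating subspace $s^{-1}\overline{T}^c(\delta)\subset BV_\infty$ is zero. Equivalently, the morphism factors through the quotient $BV_\infty/I$, where $I$ is the operadic ideal generated by the unary generators $s^{-1}\delta^m$, $m\ge 1$. The task reduces to identifying this quotient with the minimal model of the operad $HyperCom$.

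Next, I would describe the quotient explicitly. As a free graded operad, $BV_\infty/I \cong \free(s^{-1}\mathcal{S}^{-1}\overline{Grav}^*)$. For its differential, I would invoke Proposition~\ref{prop:Shapeofd}: applied to $s^{-1}\delta^m$ the differential is itself a polynomial in the $s^{-1}\delta^k$ generators, so $I$ is stable under $d$ and the quotient is differential. More importantly, applied to an element of $s^{-1}\mathcal{S}^{-1}\overline{Grav}^*$ the differential is a sum of decorated trees; modulo $I$ only the summands with all vertices labelled by $\mathcal{S}^{-1}\overline{Grav}^*$ survive. The only such surviving piece is $d_2$, which, as recorded right after Proposition~\ref{prop:Shapeofd}, coincides with the cooperadic decomposition of $\mathcal{S}^{-1}\overline{Grav}^*$. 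Consequently, $(BV_\infty/I,\bar d)$ is precisely the cobar construction $\Omega(\mathcal{S}^{-1}Grav^*)$.

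Finally, I would invoke Getzler's Koszul duality statement $HyperCom^!\cong Grav$, which is equivalent, at the level of Koszul dual cooperads, to $HyperCom^\antishriek \cong \mathcal{S}^{-1}Grav^*$ (after the standard operadic suspension). Since both $HyperCom$ and $Grav$ are Koszul operads, the cobar construction on their Koszul dual cooperads gives their minimal models; in particular
\[
BV_\infty/I \;\cong\; \Omega\,HyperCom^\antishriek \;=:\; HyperCom_\infty .
\]
A morphism of dg operads $BV_\infty/I\to \End_A$ is therefore exactly a $HyperCom_\infty$-algebra structure on $A$, i.e.\ a homotopy hypercommutative algebra structure, which is what we wanted.

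The main point to be careful about is the compatibility of the surviving piece of the differential $d$ with the cobar differential on $\mathcal{S}^{-1}Grav^*$; this is precisely the content of the second sentence of Proposition~\ref{prop:Shapeofd}. The only other potential obstacle is keeping track of operadic (de)suspensions so that the identification $HyperCom^\antishriek\cong \mathcal{S}^{-1}Grav^*$ matches degrees, but this is a bookkeeping exercise and follows the conventions already used in the paper.
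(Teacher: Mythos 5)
Your proof is correct and follows essentially the same route as the paper: the paper deduces the statement from Proposition~\ref{prop:RedHoBValg} (itself a corollary of Proposition~\ref{prop:Shapeofd}) plus the Koszulity of $HyperCom$ with $HyperCom^!\cong Grav$, which is exactly the content of your argument. Your reformulation via the quotient by the ideal $(s^{-1}\overline{T}^c(\delta))$ is precisely the ``operadic terms'' restatement the paper itself records immediately after the proposition.
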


\begin{proof}
This is a direct corollary of Proposition~\ref{prop:RedHoBValg} together with the fact that the operad $HyperCom$ is Koszul, that is $\Omega (\mathcal{S}^{-1} Grav^*) \qi HyperCom$, see \cite{Getzler95}.
\end{proof} 

In operadic terms, this means that 
$$\big(  s^{-1}\overline{T}^c(\delta)     \big) \mono BV_\infty \epi HyperCom_\infty $$ 
is a short exact sequence of dg operads, where $\big(  s^{-1}\overline{T}^c(\delta)     \big) $ is the  ideal of $BV_\infty$ generated by $s^{-1}\overline{T}^c(\delta)$. Equivalently, the short sequence of homotopy cooperads
$$\overline{T}^c(\delta) \mono \cH \epi {\mathcal S}^{-1}\overline{Grav}^*\cong sH^\bullet(\mathcal{M}_{0,n+1})$$ 
is  exact, i.e. $\cH$ is an extension of the (non-unital) cooperads $\overline{T}^c(\delta)=H^\bullet(S^1)^{\ac}$ and $sH^\bullet(\mathcal{M}_{0,n+1})=H_\bullet(\overline{\mathcal{M}}_{0,n+1})^{\ac}$.

\begin{thm}
The operad $HyperCom$ is a representative of the homotopy quotient of the operad $BV$ by $\Delta$ in the homotopy category of dg operads.
\end{thm}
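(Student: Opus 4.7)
The plan is to interpret the homotopy quotient of $BV$ by $\Delta$ as the homotopy cofiber, in Hinich's model category of dg operads \cite{Hinich97}, of the canonical map $D \to BV$, where $D := \KK[\Delta]/(\Delta^2)$ is the arity-one suboperad of $BV$ generated by $\Delta$. Computing this homotopy cofiber requires replacing $D \to BV$ by a cofibration between cofibrant objects and then taking the strict cokernel; the whole argument amounts to producing such a replacement inside $BV_\infty$.

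First, I would recall that $D$ is a Koszul algebra with Koszul dual coalgebra $D^{\ac} = T^c(\delta)$, so its cobar construction $D_\infty := \Omega\, T^c(\delta) \cong (\free(s^{-1}\overline{T}^c(\delta)), d_2)$, concentrated in arity one, is a cofibrant resolution of $D$. By Proposition~\ref{prop:Shapeofd}, the restriction to the $\delta$-generators of the differential of the minimal model $BV_\infty$ is exactly this cobar differential. Hence the inclusion of quasi-free operads $D_\infty \hookrightarrow BV_\infty$ is a well-defined map of dg operads, fitting into a commutative square with the quasi-isomorphisms $D_\infty \qi D$ and $BV_\infty \qi BV$.

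Second, I would argue that $D_\infty \hookrightarrow BV_\infty$ is a cofibration in Hinich's model structure. The syzygy filtration built into the definition of a minimal operad presents $BV_\infty$ as obtained from $D_\infty$ by a transfinite sequence of cellular attachments of free $\Sy$-modules (the generators of each subsequent syzygy degree, concentrated in $s^{-1}{\mathcal S}^{-1}\overline{Grav}^*$), which is precisely the form of a generating cofibration. Consequently, the homotopy pushout of $\I \leftarrow D_\infty \hookrightarrow BV_\infty$ is computed by the strict quotient of $BV_\infty$ by the operadic ideal generated by $s^{-1}\overline{T}^c(\delta)$. By the short exact sequence of dg operads displayed right after Proposition~\ref{prop:redBValg-Hypercom}, this strict quotient is exactly $HyperCom_\infty$.

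Finally, since $HyperCom$ is Koszul \cite{Getzler95}, one has $HyperCom_\infty = \Omega({\mathcal S}^{-1}Grav^*) \qi HyperCom$, and the homotopy quotient of $BV$ by $\Delta$ is represented by $HyperCom$ in the homotopy category of dg operads. The main obstacle is the second step: one must verify precisely that $D_\infty \hookrightarrow BV_\infty$ is a cofibration of cofibrant dg operads, so that the strict pushout along $D_\infty \to \I$ genuinely computes the derived one. This is exactly where the Sullivan-type triangulation condition on the generators, built into our definition of minimal operad, is essential, since it ensures that $BV_\infty$ is built from $D_\infty$ by a well-founded sequence of free extensions.
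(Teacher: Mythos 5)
Your proposal is correct and follows essentially the same route as the paper's proof: both realize the homotopy quotient as the pushout of $\I \gets D_\infty \mono BV_\infty$ with $D_\infty = \Omega\, T^c(\delta)$ sitting inside the minimal model, justify that this strict pushout computes the derived one via the cofibration $D_\infty \mono BV_\infty$ between cofibrant operads, and identify the quotient with $HyperCom_\infty \qi HyperCom$. Your extra care in checking that the inclusion is a cofibration via the syzygy filtration, and in using Proposition~\ref{prop:Shapeofd} to see that the differential restricts correctly to the $\delta$-generators, only makes explicit what the paper asserts.
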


\begin{proof}
Let $D_\infty$ denote $T(s^{-1}\overline{T}^c(\delta))$.  This is
the minimal resolution of the algebra of dual numbers $D$. The pushout
of  $ I \gets D_\infty \mono BV_\infty$ gives a representative of the
homotopy quotient of $BV$ by $\Delta$ since $D_\infty \mono BV_\infty$
is a cofibration and since all the operads in the diagram are
cofibrant, see \cite[Chapter~$15$]{Hirshhorn03}. A map from this
diagram to an operad is the same thing as a map of the generators of
$HyperCom_\infty$ that respects the differentials; since the
augmentation ideal of $D_\infty$ vanishes in any map from this
diagram, the differentials coincide with those of $HyperCom_\infty$.
So the image of $HyperCom$ in the homotopy category of dg operads
gives the homotopy quotient of $BV$ by $\Delta$.
\end{proof} 

We refer the reader to \cite{Markarian09, DotsenkoKhoroshkin09, KhoroshkinMarkarianShadrin11}  further studies on this topic. 
 This result on the level of homology allows us to conjecture that 
 the homotopy quotient of the framed little disk by the circle is the compactified
moduli space of genus zero stable curves
 $\overline{\mathcal{M}}_{0, n+1}$. This will be the subject of another paper. 

\begin{remark} Since the generators of $HyperCom_\infty$ form a cooperad,  one can define the notion of \emph{$\infty$-morphism of homotopy hypercommutative algebras} using \cite[$10.2$]{LodayVallette10}. In the case of the operad $BV_\infty$, to define the notion of \emph{$\infty$-morphism of skeletal homotopy BV-algebras},  one has to refine the arguments, using the homotopy pullback of endomorphism operads for instance. With these definitions, Proposition~\ref{prop:redBValg-Hypercom} shows that the category of homotopy hypercommutative algebras with $\infty$-morphisms is a subcategory of the category of skeletal homotopy BV-algebras with $\infty$-morphisms, but not a full subcategory. 
\end{remark}

\subsection{Maurer-Cartan interpretation}

Recall from \cite[Theorem~$28$]{MerkulovVallette09I} that the module of morphisms of $\Sy$-modules 
$$\Hom_\Sy({\cH}, \End_A):=\prod_{n\ge 1} \Hom_{\Sy_n}({\cH}(n), \End_A(n)) $$
carries an $L_\infty$-algebra structure, $\lbrace \ell_n \rbrace_{n\ge 1}$, given in terms of the homotopy cooperad structure on $\overline{\cH}$ by the formula: 
$$\ell_n(f_1, \ldots, f_n):=\sum_{t\in \Tree^{(n)}\atop \sigma\in \Sy_n}  \pm\,       {\gamma}_{\End_A} \circ  t(f_{\sigma(1)}, \ldots, f_{\sigma(n)})       \circ \Delta_t \ , $$ 
where ${\gamma}_{\End_A}$ is the composition map of operations of ${\End_A}$ and where the sign is the Koszul sign due to the permutation of the graded elements $\lbrace f_i\rbrace $. 

The solutions to the (generalized) Maurer-Cartan equation 
$$\sum_{n \ge 1} {\footnotesize \frac{1}{n!}}\, \ell_n(\alpha, \ldots, \alpha) =0 , \quad \textrm{with} \quad |\alpha|=-1,$$
in this convolution $L_\infty$-algebra are called the (generalized) \emph{twisting morphisms} and denoted by $\Tw_\infty({\cH}, \End_A)$. 

\begin{prop}\label{prop:ReducedHBV-TW}
There is a natural bijection 
$$ \Hom_{\mathsf{dg} \, \mathsf{op}}(BV_\infty, \End_A)\cong  \Tw_\infty({\cH}, \End_A) \ .$$
\end{prop}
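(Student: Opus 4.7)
The plan is to check the claimed bijection by unfolding both sides on generators. Since $BV_\infty = (\free(s^{-1}\cH),d)$ is quasi-free as a graded operad, the universal property of the free operad gives a natural bijection between morphisms of graded operads $F:\free(s^{-1}\cH)\to \End_A$ and morphisms of $\Sy$-modules $s^{-1}\cH\to \End_A$; desuspending by $s$ identifies such maps with degree $-1$ elements $\alpha\in \Hom_\Sy(\cH,\End_A)$. So I first set up this correspondence $F\leftrightarrow \alpha$ and then show that $F$ is a chain map, i.e.\ $F\circ d = \partial_A\circ F$, if and only if $\alpha$ is a generalized twisting morphism.

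For this I would evaluate both sides on a generator $s^{-1}c$, $c\in \cH$. By Proposition~\ref{prop:EquivDefHC}, the differential $d$ on $BV_\infty$ is the unique derivation whose value on generators is determined by the collection $\{\Delta_t\colon \cH\to t(\cH)\}_{t\in \Tree}$. Concretely, writing $\Delta_t(c)=\sum t(c_1,\ldots,c_n)$, one has, up to the global shift sign between $\Delta$ and $d$ recorded in Section~3.1,
\[
d(s^{-1}c) \;=\; \sum_{t\in \Tree}\; \pm\, t(s^{-1}c_1,\ldots,s^{-1}c_n)\ ,
\]
and because $F$ is a morphism of graded operads acting by $\alpha$ on generators, applying $F$ produces, for each $n$, a sum over $t\in\Tree^{(n)}$ of $\gamma_{\End_A}\circ t(\alpha,\ldots,\alpha)\circ \Delta_t$. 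On the other side $\partial_A(F(s^{-1}c))=\partial_A(\alpha(c))$, which is exactly the $\ell_1$-type contribution coming from the internal differential on $\End_A$ (the $d_\cH$ contribution on the homotopy-cooperad side is absorbed into the $n=1$ corolla piece of $\Delta_t$). The upshot is that $F\circ d=\partial_A\circ F$ on generators is equivalent, after re-indexing the sum over trees, to the equation
\[
\partial_A(\alpha) \;+\; \sum_{n\ge 2}\; \sum_{t\in \Tree^{(n)}}\; \pm\, \gamma_{\End_A}\circ t(\alpha,\ldots,\alpha)\circ \Delta_t \;=\; 0 \ .
\]

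Comparing this with the definition of the $\ell_n$ recalled just before the proposition, the sum $\sum_{t\in\Tree^{(n)}}$ of $\Sy_n$-equivariant maps matches precisely the symmetrized expression $\tfrac{1}{n!}\sum_{\sigma\in\Sy_n} \gamma_{\End_A}\circ t(\alpha_{\sigma(1)},\ldots,\alpha_{\sigma(n)})\circ \Delta_t$ with all arguments equal to $\alpha$; the factor $n!$ and the sum over permutations exactly cancel because $\alpha$ has odd degree and the trees $t$ are unordered. Thus $F$ is a morphism of dg operads iff $\alpha$ satisfies the generalized Maurer--Cartan equation $\sum_{n\ge 1}\frac{1}{n!}\ell_n(\alpha,\ldots,\alpha)=0$, which gives the desired natural bijection.

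The main obstacle is the combinatorial/sign bookkeeping in the second paragraph: one must carefully track the Koszul signs coming from the identification $\free(s^{-1}\cH)\cong \free(\cH)$ of Section~3.1, the signs in the coderivation/derivation correspondence of Proposition~\ref{prop:EquivDefHC}, and the signs appearing in $\ell_n$; and one must confirm that the passage from an $\Sy$-equivariant sum over (unordered) trees to a symmetrized sum over labelled trees really produces the $1/n!$ of the Maurer--Cartan equation. Once this is done, naturality in $A$ is immediate from the functoriality of both constructions in the target operad.
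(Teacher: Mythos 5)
Your argument is correct in outline, but it is worth knowing that the paper does not prove this proposition directly at all: its entire proof is the citation ``This follows from Theorem~$54$ of \cite{MerkulovVallette09I}'', i.e.\ it invokes the general statement that for a quasi-free operad $\Omega_\pi \Cc = (\free(s^{-1}\Cc),d)$ on a homotopy cooperad $\Cc$, morphisms of dg operads to $\Po$ correspond to Maurer--Cartan elements of the convolution $L_\infty$-algebra $\Hom_\Sy(\Cc,\Po)$ (this is also the first bijection of Theorem~\ref{thm:PiBarCobarAdj} later in the paper). What you have written is essentially a direct proof of the relevant instance of that cited theorem: universal property of the free operad to get $F\leftrightarrow\alpha$, reduction of the chain-map condition to generators, and identification of the resulting equation with $\sum_n\frac{1}{n!}\ell_n(\alpha,\ldots,\alpha)=0$. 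That is a legitimate and more self-contained route; what the citation buys the authors is precisely the sign and symmetrization bookkeeping you flag as the ``main obstacle'', which is carried out once and for all in \cite{MerkulovVallette09I}. Two small points to tighten if you keep your version: (i) say explicitly that $F\circ d-\partial_A\circ F$ is an $F$-derivation, hence vanishes iff it vanishes on generators --- this is why evaluating on $s^{-1}c$ suffices; and (ii) note that for the particular $\cH$ at hand the corolla components of the transferred structure are $d_{\cH}=0$, so the $\ell_1$-term of the Maurer--Cartan equation reduces to $\partial_A(\alpha)$ alone.
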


\begin{proof}
This follows from Theorem~$54$ of \cite{MerkulovVallette09I}.
\end{proof}

This result gives an interpretation of skeletal homotopy BV-algebra structures in term of Maurer-Cartan elements in an $L_\infty$-algebra. \\

We denote by $(\im H d_\psi )^{[k]}$ the subspace of $\im H d_\psi$ spanned by the tree monomials with $k$ vertices labelled by $\mu$. 

\begin{lemma}\label{lem:WeightGrav} 
The isomorphism of Theorem~\ref{thm:HBV=Grav} preserves the respective gradings: 
$${\mathcal S}^{-1} {Grav^*}^{(k)}\cong (\im Hd_\psi)^{[k]} \ .$$
\end{lemma}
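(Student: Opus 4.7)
The plan is to trace the natural grading by $\mu$-count on $\Ge^{\ac}$ through the chain of isomorphisms
$\im Hd_\psi \cong \Ge^{\ac}/\im d_\psi \cong {\mathcal S}^{-1}Grav^*$
(the first from the deformation retract of Section~\ref{sec:homotopy}, the second from Proposition~\ref{prop:getzler}) and to match it at the end with the weight grading on $Grav^*$.

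For the first isomorphism, I would observe that $\Ge^{\ac}\subset \cofree(M)$ carries a formal grading in which a tree monomial with $k$ vertices labelled by $\mu$ sits in degree $k$. The map $d_\psi$, induced by $\psi\colon \mu\mapsto\beta,\ \beta\mapsto 0$, lowers $\mu$-count by exactly one on each monomial, while $H$, built from $h\colon\beta\mapsto \mu,\ \mu\mapsto 0$, raises it by exactly one. Consequently $d_\psi$, $H$ and $Hd_\psi$ are all homogeneous with respect to this grading, so $\im d_\psi$, $\im Hd_\psi$ and the quotient $\Ge^{\ac}/\im d_\psi$ all inherit it, and the isomorphism $\im Hd_\psi\hookrightarrow \Ge^{\ac}\twoheadrightarrow \Ge^{\ac}/\im d_\psi$ extracted from the deformation retract is manifestly $\mu$-count preserving.

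For the second isomorphism, the proof of Proposition~\ref{prop:getzler} realises it as the arity-wise dual of Getzler's theorem ${\mathcal S}^{-1}Grav\cong \Ker\Delta\subset \Ge$ via the identification $(\Ge^{\ac})^*\cong {\mathcal S}^2\Ge$. A quick degree computation in arity~$2$ shows that under the resulting pairing $\mu$ is dual to the shifted bracket and $\beta$ to the shifted product, so that the $\mu$-count on $\Ge^{\ac}$ becomes the $\langle\,,\,\rangle$-count on $\Ge$. It therefore suffices to show that Getzler's inclusion identifies the weight-$k$ subspace $Grav^{(k)}$ with the subspace of $\Ker\Delta$ spanned by elements of $\langle\,,\,\rangle$-count equal to $k$.

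This last point is the main content, and it follows from a pure degree count combined with the additivity of the $\langle\,,\,\rangle$-count under operadic composition in $\Ge$. Indeed, the $n$-ary weight-$1$ generator $[x_1,\dots,x_n]$ has degree $2-n$ in $Grav$, hence degree $1$ in ${\mathcal S}^{-1}Grav$; its image in $\Ge(n)$ must therefore also have degree $1$, which, since each $\langle\,,\,\rangle$ contributes $1$ and each $\bullet$ contributes $0$, forces it to be a combination of monomials with exactly one bracket and $n-2$ products---this is precisely Getzler's explicit formula for the gravity bracket inside a Gerstenhaber algebra. Because operadic composition in $\Ge$ adds the $\langle\,,\,\rangle$-counts of its arguments, any $k$-fold composite of weight-$1$ generators lies in the $\langle\,,\,\rangle$-count $k$ subspace. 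The main obstacle is the careful bookkeeping with the Koszul pairing and the ${\mathcal S}^2$-twist of Proposition~\ref{prop:getzler}; once those conventions are pinned down, the rest is an immediate degree argument.
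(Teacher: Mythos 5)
Your argument is correct, and it is considerably more detailed than what the paper records: the paper's entire proof of this lemma is ``By direct inspection,'' so there is no written argument to compare against. Each of your steps checks out: $d_\psi$ lowers and $H$ raises the $\mu$-count by exactly one, so $Hd_\psi$ and $\im d_\psi$ are homogeneous and the retract identifies $(\im Hd_\psi)^{[k]}$ with the weight-$k$ piece of $\Ge^{\ac}/\im d_\psi$; under the pairing $(\Ge^{\ac})^*\cong {\mathcal S}^2\Ge$ the degree-$1$ generator $\mu=s\bullet$ is indeed dual to the degree-$(-1)$ generator $s^{-2}\langle\,,\,\rangle$, so $\mu$-count dualizes to bracket-count; and Getzler's embedding sends the weight-one gravity generator to the degree-one, hence one-bracket, part of $\Ge(n)$, with weights adding under composition. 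It is worth pointing out, though, that the ``direct inspection'' the authors have in mind is almost certainly the following one-line shortcut, which subsumes your entire chain of identifications: in arity $n$ a tree monomial in $\Ge^{\ac}(n)$ with $k$ vertices labelled $\mu$ has homological degree $k+2(n-1-k)=2n-2-k$, while ${\mathcal S}^{-1}{Grav^*}^{(k)}(n)$ sits in degree $-(k-n+1)+(n-1)=2n-2-k$ as well; since in each arity the weight is therefore a function of the homological degree on both sides, \emph{any} isomorphism of graded $\Sy$-modules between them automatically preserves the weight decomposition. Your longer route buys a little extra --- it identifies explicitly which monomials correspond under the Koszul pairing --- but for the statement of the lemma the degree count alone suffices.
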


\begin{proof}
By direct inspection. 
\end{proof}

This result allows us to organize the operations of a skeletal homotopy BV-algebra into strata. The first stratum is described as follows. Since ${\mathcal S}^{-1} {Grav^*}^{(1)}\cong H  Lie_1^{\ac} $, in weight $1$, is equal to the trivial representation of $\Sy_n$, then we get  $\Hom_\Sy({{Grav}^*}^{(1)}, \End_A)\cong \Hom({S^c}^{(\ge 2)} (A), A)$, up to suspension. 

$\Delta_t$ lands entirely in weight one only when $t$ has
precisely two vertices.

So  a twisting element $\alpha$ vanishing outside the weight $1$ part of ${Grav}^*$ actually satisfies the truncated Maurer-Cartan equation $\partial \alpha +\frac{1}{2}\ell_2(\alpha, \alpha)=0$. This corresponds to the definition of a Frobenuis manifold in terms of a hypercommutative algebra structure, see Y.I. Manin \cite{Manin99}. 

\section{Homotopy bar-cobar adjunction}

In this section, we introduce a new bar-cobar adjunction between the category of augmented dg operads and the category of homotopy cooperads. This bar construction relies on the notion of a cofree homotopy cooperad, which we make explicit in terms of nested trees. 

\subsection{Cofree homotopy cooperad}

We consider now the category of homotopy cooperads with (strict) morphisms. 

\begin{defi}
A \emph{morphism} $f : (\Cc, \lbrace \Delta_t\rbrace) \to (\mathcal{D}, \lbrace \Delta'_t\rbrace)$ of homotopy cooperads is a morphism of graded $\Sy$-modules $\Cc \to \mathcal{D}$  which commutes with the structure maps. 
\end{defi}

A morphism of homotopy cooperads is an $\infty$-morphism with vanishing components $\Cc \to \free (\mathcal{D})^{(n)}$ for $n\ge 2$.
The associated category is denoted by $\textsf{coop}_\infty$. There is a forgetful functor 
$$\mathcal{U} : \textsf{coop}_\infty \to \textsf{dg-}\Sy\textsf{-Mod}, \quad (\Cc, \lbrace \Delta_t\rbrace) \mapsto (\Cc,  d_\Cc) \ , $$ 
which retains only the underlying dg $\Sy$-module structure of a homotopy cooperad.

\begin{defi}
A \emph{nested tree} is a  tree $t\in\Tree \backslash \lbrace | \rbrace $ equipped with a set of  subsets  of vertices $\lbrace T_i \rbrace_{i}$, called \emph{nests}, such that:
\begin{itemize}
\item[$\diamond$] each nest $T_i$ corresponds to a subtree of the tree $t$,
\item[$\diamond$]  each nest $T_i$ has at least two elements,
\item[$\diamond$] if $T_i\cap T_j\ne \emptyset$, then $T_i\subset T_j$ or $T_j\subset T_i$, and
\item[$\diamond$] the full subset corresponding to the tree $t$ is a nest as long as $t$ has more than one vertex.
\end{itemize}
The associated category is denoted by $\mathsf{Nested Tree}$. See Figure~\ref{Fig:NestedTree} for an example. 
\end{defi}

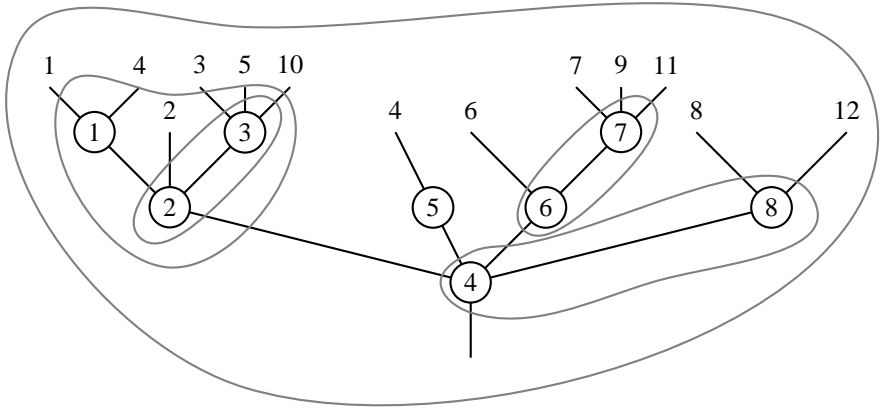
\begin{figure}[h]
\centering
\psset{unit=2cm}
\begin{pspicture}(-1.6,-.4)(4.3,2.5)
\psline(1.5,0)(1.5,.5)
\psline(1.5,.5)(1.25,1)
\psline(1.5,.5)(2,1)
\psline(1.5,.5)(3.5,1)
\psline(1.5,.5)(-.5,1)
\psline(-.5,1)(-1,1.5)
\psline(-.5,1)(-.5,1.5)
\uput[u](-.5,1.5){2}
\psline(-.5,1)(0,1.5)
\psline(-1,1.5)(-1.3,1.8)
\uput[u](-1.3,1.8){1}
\psline(-1,1.5)(-.7,1.8)
\uput[u](-.7,1.8){4}
\psline(0,1.5)(-.3,1.8)
\uput[u](-.3,1.8){3}
\psline(0,1.5)(0,1.8)
\uput[u](0,1.8){5}
\psline(0,1.5)(.3,1.8)
\uput[u](.3,1.8){10}
\psline(1.25,1)(1,1.5)
\uput[u](1,1.5){4}
\psline(2,1)(1.5,1.5)
\uput[u](1.5,1.5){6}
\psline(2,1)(2.5,1.5)
\psline(2.5,1.5)(2.2,1.8)
\uput[u](2.2,1.8){7}
\psline(2.5,1.5)(2.5,1.8)
\uput[u](2.5,1.8){9}
\psline(2.5,1.5)(2.8,1.8)
\uput[u](2.8,1.8){11}
\psline(3.5,1)(3,1.5)
\uput[u](3,1.5){8}
\psline(3.5,1)(4,1.5)
\uput[u](4,1.5){12}
\rput(1.5,.5){\pscirclebox[fillstyle=solid]{4}}
\rput(1.25,1){\pscirclebox[fillstyle=solid]{5}}
\rput(2,1){\pscirclebox[fillstyle=solid]{6}}
\rput(3.5,1){\pscirclebox[fillstyle=solid]{8}}
\rput(-.5,1){\pscirclebox[fillstyle=solid]{2}}
\rput(-1,1.5){\pscirclebox[fillstyle=solid]{1}}
\rput(0,1.5){\pscirclebox[fillstyle=solid]{3}}
\rput(2.5,1.5){\pscirclebox[fillstyle=solid]{7}}
\psccurve[linecolor=gray](4.2,1.6)(0,2.2)(-1.5,2.1)(0,-.2)
\psccurve[linecolor=gray](-1.2,1.8)(-.5,1.75)(.3,1.7)(-.5,.6)
\psccurve[linecolor=gray](-.7,.8)(-.4,1.4)(.2,1.7)(-.1,1.1)
\psccurve[linecolor=gray](1.85,.85)(2.1,1.4)(2.7,1.7)(2.4,1.1)
\psccurve[linecolor=gray](1.5,.3)(1.3,.5)(1.5,.7)(1.8,.75)(3.8,1)(2.7,.5)
\end{pspicture}

\caption{Example of a nested tree} \label{Fig:NestedTree}
\end{figure}

We consider the following total order on nests. The innermost nests are the largest ones. We compare them using their minimal element. Then we forget about these nests and proceed in the same way until reaching the full nest, which is the minimal nest. In the example of Figure~\ref{Fig:NestedTree}, it gives 
$$
T_1=\lbrace   1, 2, 3, 4, 5, 6, 7, 8     \rbrace <
T_2=\lbrace  1, 2, 3      \rbrace <
T_3=\lbrace       2, 3 \rbrace <
T_4=\lbrace      4, 8   \rbrace <
T_5=\lbrace       6, 7   \rbrace 
 \ .$$
 
 \smallskip

To any dg $\Sy$-module $(V, d_V)$, we associate the $\Sy$-module spanned by nested trees with vertices labeled by elements of $V$. It is denoted by 
$$\NT(V):=\bigoplus_{t\in \NeTr} t(V)\ . $$
Using the order on vertices given in Section~\ref{subsec:Trees} and the above order on nests, we write a simple element of $\NT(V)$ by 
$$ t(T_1, T_2, \ldots, T_N; v_1, v_2, \ldots, v_n)\ .$$
Its homological degree  is 
equal to 
$\sum_{k=1}^n |v_k|+N-n+1$. So the degree of a labeled corolla $t(v)$ is equal to $|v|$.

Two nests $T_j \subsetneq T_i$ are called \emph{consecutive} if $T_j \subset T_k \subset T_i$ implies either $T_k=T_i$ or $T_k=T_j$. We define a differential $d_\mathcal{N}$  by 
$$d_\mathcal{N}  (t):=\sum_{\text{consecutive pairs}\ T_j\subsetneq T_i} 
\pm \, 
t(T_1, \ldots, {T}_i, \ldots, {\widehat{T}_j}, \ldots,  T_N; v_1, \ldots, v_n)\ , $$
where the notation $\widehat{T_i}$ means that we forget the nest $T_i$. The sign is given as usual by the Koszul rule as follows. To every nest $T_i$, we associate the tree $t_i$ obtained from the subtree of $t$ defined by $T_i$ after contracting all its proper subnests. Each vertex thereby obtained is labeled by the least element of the contracted nest. 
The degree of a nest $T_i$ is equal to $|T_i|:=2-\# t_i$, where $\#t_i$ stands for the number of vertices of the tree $t_i$. 
(In the example of Figure~\ref{Fig:NestedTree}, one has $|T_1|=-2$.) 
If $T_j\subsetneq T_i$, then $i<j$. So we first permute $T_j$ with the nests $T_{j-1}, \ldots, T_{i+1}$ to bring it next to $T_i$. Then we apply the differential to the pair $(T_i, T_j)$, that is we forget about the nest $T_j$. This comes with a sign equal to $(-1)$ to the power $\#t_i + \#t_j + k + \textrm{des}(t_j,t_i)$, 
where $k$ is the number of vertices of $t_i$ smaller than the smallest vertex of $t_j$ and where $\textrm{des}(t_j,t_i)$ is the number of descents, that is the number of pairs $(a,b)$ of vertices of $t_j$ and $t_i$ respectively such that $a>b$. But the differential has to ``jump over'' the nests $T_1, \ldots, T_{i-1}$. In the end, it produces the sign 
$(-1)^\varepsilon$, with 
$$\varepsilon:={|T_1|+\cdots+|T_{i-1}|+|T_j|(|T_{i+1}|+\cdots+|T_{j-1}|) + \#t_i +\#t_j+ k + \textrm{des}(t_j,t_i)} \ .$$

We consider the differential on $\NT(V)$ given by the sum over all the vertices of the image of the labeling element of $V$ under $d_V$. By a slight abuse of notation, it is still denoted $d_V$:
$$d_V(t):=\sum_{i=1}^n 
(-1)^{N-n+1+|v_1|+\cdots+|v_{i-1}|} \, 
t(T_1, \ldots, T_N; v_1, \ldots, d_V(v_i), \ldots, v_n)\ . $$

\smallskip

We consider maps $\lbrace \Delta_t : \NT(V) \to t(\NT(V))
\rbrace_{t\in \Tree^{(\ge 2)}}$ defined as follows. Let $\tau$ be a simple element of $\NT(V)$. We consider the aforementioned tree $t_1$ associated to the full nest $T_1$, which is obtained by contracting all the subtrees corresponding to the interior nests. If $t\neq t_1$, then $\Delta_t(\tau):=0$. Otherwise, if $t= t_1$, the image of $\tau$ under $\Delta_t$ is equal to the tree $t_1$ with vertices labeled by the nested trees obtained from $\tau$ by forgetting its full nest.

\begin{prop}\label{prop:CofreeHoCoop}
For any dg $\Sy$-module $(V,d_V)$, the data $(\NT(V), d_V+d_\mathcal{N}, \lbrace \Delta_t \rbrace_{t\in \Tree^{(2)}})$ form a homotopy cooperad. This defines a functor $\NT : \mathsf{dg}\textsf{-}\Sy\textsf{-}\mathsf{Mod}\to \mathsf{coop}_\infty$ which is right adjoint to the forgetful functor 
$\mathcal{U} : \mathsf{coop}_\infty \to \mathsf{dg}\textsf{-}\Sy\textsf{-}\mathsf{Mod}$.
\end{prop}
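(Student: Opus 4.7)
The plan is to establish in turn: (1) that $(\NT(V), d_V + d_\mathcal{N})$ is a chain complex, (2) that the system $\{\Delta_t\}_{t \in \Tree^{(\geq 2)}}$ together with this differential satisfies the axioms of Proposition~\ref{prop:EquivDefHC}, and (3) the adjunction $\NT \dashv \mathcal{U}$.

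For (1), the identities $d_V^2 = 0$ and $d_V d_\mathcal{N} + d_\mathcal{N} d_V = 0$ are immediate, the latter because the two operators act on independent data (vertex labels versus nest structure) with matching Koszul signs. The crux is $d_\mathcal{N}^2 = 0$. Expanding the square, one obtains a sum over ordered pairs of successive nest-forgettings. These organize into two groups which cancel pairwise: chains of three nested nests $T_k \subsetneq T_j \subsetneq T_i$, where forgetting the middle then the innermost cancels against the opposite order; and disjoint pairs of consecutive nests, which cancel against their opposite ordering. The sign convention, involving the nest degrees $|T_i| = 2 - \#t_i$ and descent counts, is precisely the one producing these cancellations, being the standard sign prescription for the nested-set complex of a tree.

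For (2), Proposition~\ref{prop:EquivDefHC} requires four conditions. Triviality on the trivial tree is built into the definition. The degree of $\Delta_t$ equals $\#t - 2$ by a direct count: passing from $\tau$ to $\Delta_t(\tau)$ loses the nest $T_1$ of degree $2 - \#t$ and redistributes the remaining nests among the vertices of $t$. Finiteness is immediate since $\Delta_t(\tau) = 0$ unless $t = t_1(\tau)$. The main identity, encoding $d^2 = 0$ on the quasi-free operad $(\free(s^{-1}\NT(V)), d)$, unpacks into three families: the pure differential squared (vanishing by (1)); iterated decompositions $\Delta_{t'} \circ \Delta_t$, which correspond to choosing two comparable nests as successive outermost nests; and cross terms $(d_V + d_\mathcal{N})\Delta_t \pm \Delta_t(d_V + d_\mathcal{N})$, in which the $d_\mathcal{N}$ component mediates between the ``full nest $T_1$ alone'' and ``full nest $T_1$ with a consecutive sub-nest $T_2$'' interpretations of a double decomposition. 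Matching these families term by term is the combinatorial core of the statement and is again governed by the nested-set sign convention.

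For (3), the counit $\varepsilon_V : \mathcal{U}\NT(V) \to V$ is the projection onto the summand of labeled corollas. Given $f : \mathcal{U}\Cc \to V$ in dg $\Sy$-modules, I define $\tilde{f} : \Cc \to \NT(V)$ by induction on nest depth: for each nested tree $(t, \{T_i\})$, the $t(V)$-component of $\tilde{f}(c)$ is obtained by applying $\Delta_{t_1}$ of $\Cc$ to $c$, then applying $\Delta_{t_j}$ at the vertex corresponding to each sub-nest $T_j$, recursing down to innermost nests, and finally labeling every remaining vertex by $f$. Compatibility of $\tilde{f}$ with the $\Delta_t$ of $\NT(V)$ is immediate: the target $\Delta_t$ extracts the full-nest layer, which matches the first step of the inductive definition. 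Compatibility with differentials is exactly the homotopy cooperad relation for $\Cc$ combined with $f$ being a chain map. Uniqueness follows because any lift, composed with projection to corollas, must recover $f$, and the morphism axiom then forces the $t$-component along each nested tree to equal the iterated construction. The main obstacle throughout is organizing the sign combinatorics of nested sets uniformly; this is exactly what the definition of $d_\mathcal{N}$ has been calibrated for.
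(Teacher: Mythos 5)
Your proposal is correct and follows essentially the same route as the paper: the paper dismisses the homotopy-cooperad axioms for $\NT(V)$ as ``trivially satisfied'' and ``straightforward to check'' (which you usefully elaborate via pairwise cancellation and the degree count), and your inductive lift $\tilde{f}$ is exactly the paper's composite $\NT(f)\circ\Delta^{\mathrm{iter}}$, where $\Delta^{\mathrm{iter}}$ iterates the decomposition maps of $\Cc$ along the nesting. The uniqueness argument via projection onto corollas is also the one the paper intends with its universal-property diagram.
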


\begin{proof}
The three first points of the equivalent definition of a homotopy cooperad given in Proposition~\ref{prop:EquivDefHC} are trivially satisfied by $\NT(V)$. The last point is  straightforward to check. 

Let $\Cc$ be a homotopy cooperad. We consider the morphism of $\Sy$-modules $\Delta^{\textrm{iter}} :  \Cc \to \NT(\Cc)$ defined as follows. 
For any tree $t$, the extra data given by the nests $t(T_1, \ldots, T_N)$ is equivalent to the decomposition of $t$ into successive substitutions 
$$t=(((t_1\circ_{i_1} t_2)\circ_{i_2} t_3) \cdots )\circ_{i_{N-1}} t_{N}    \ ,$$
where the trees $\lbrace t_i \rbrace$ are associated to the nests $\lbrace T_i\rbrace $ as defined above. 
The image of the map $\Delta^\textrm{iter}$ on a nested tree $t(T_1, \ldots, T_N)$ is defined by 
$$ \Delta^\textrm{iter}_t := \Delta_{t_{N}}      \circ_{i_{N-1}}   (\cdots   (\Delta_{t_3}      \circ_{i_2}  (\Delta_{t_2} \circ_{i_1} \Delta_{t_1} ))) \ . $$
Let $V$ be a dg $\Sy$-module. To any morphism of dg $\Sy$-modules 
$f : \mathcal{U}(\Cc) \to V$, we associate a morphism $F : \Cc \to \NT(V)$ defined by the composite 
$$F :=   \Cc \xrightarrow{\Delta^\textrm{iter}}  \NT(\Cc)    \xrightarrow{\NT(f)} \NT(V) \ .   $$
The map $F$ is a morphism of homotopy cooperads which satisfies the following universal property 
$$\xymatrix{  V    &  \ar@{->>}[l]  \NT(V) \\
 &   \ar[ul]^f  \Cc  \ar@{..>}[u]_{\exists ! \, F} \ ,    } $$
which concludes the proof. 
\end{proof}

Hence the homotopy cooperad $\NT(V)$ is called the \emph{cofree homotopy cooperad on $V$}. 

\begin{remarks} $ \ $

\begin{itemize}
\item[$\diamond$]
The endofunctor $\mathcal{U}\circ  \NT$ in 
$\mathsf{dg}\textsf{-}\Sy\textsf{-}\mathsf{Mod}$ can be endowed with a comonad structure: decompose a nested tree into all the possible ways of seeing it as a nested tree of nested subtrees. 
Proposition~\ref{prop:CofreeHoCoop} and its proof are equivalent to saying that the category of homotopy cooperads is the category of coalgebras over the comonad $\mathcal{U}\circ  \NT$.

\item[$\diamond$] Recall that the notion of an $A_\infty$-algebra can be encoded geometrically by the Stasheff polytopes, also called the associahedra. In the same way, the notion of a homotopy cooperad can be encoded  by a family of polytopes, defined by by means of \emph{graph associahedra} labelled by nested trees as introduced by M.P.  Carr and S.L.  Devadoss in 
\cite{CarrDevadoss06, DevadossForcey08}. Notice that this notion generalizes the nested sets of 
C. De Concini and C. Procesi \cite{DeConciniProcesi95}. 
For instance, the chain subcomplex of nested trees with fixed underlying tree $t$ should be isomorphic to 
the cochain complex
$$ (\NT_t, d_\N)\cong C^\bullet(\text{graph associahedron associated to}\  t)$$
This surely deserves further study, which we leave to a future work or to the interested reader. 
\end{itemize}
\end{remarks}

\subsection{Homotopy bar-cobar adjunction}

\begin{defi}
Let $(\Po, \gamma, d_\Po)$ be an augmented dg operad. The underlying $\Sy$-module of the  \emph{bar construction} $\B_\pi \Po$ is given by the cofree homotopy cooperad $\NT(s\oPo)$ on the suspension of the augmentation ideal of $\Po$. We define the differential $d_\gamma$ by 
\begin{multline*}
d_\mathcal{\gamma}  (t(T_1, \ldots, T_N, s\mu_1, \ldots, s\mu_n)):=\\
\sum_{\text{innermost}\ 
T_i=\lbrace  i_1, \ldots, i_k   \rbrace}
\pm \, 
t(T_1, \ldots, {\widehat{T}_i}, \ldots, T_N; s\mu_1, \ldots, 
s\gamma(t_i(\mu_{i_1}, \ldots, \mu_{i_k})), \ldots, \widehat{s\mu_{i_2}}, \ldots, \widehat{s\mu_{i_k}}, \ldots, 
s\mu_n).
\end{multline*}
We consider
$$\B_\pi \Po :=(\NT(s\oPo), d_\Po+d_\mathcal{N}+d_\gamma, \lbrace \Delta_t \rbrace_{t\in \Tree^{(\ge 2)}})\ . $$
\end{defi}

\begin{prop}
The data $(\NT(s\oPo), d_\Po+d_\mathcal{N}+d_\gamma, \lbrace \Delta_t \rbrace_{t\in \Tree^{(\ge 2)}})$ form a homotopy cooperad.
\end{prop}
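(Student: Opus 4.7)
The plan is to verify the equivalent conditions of Proposition~\ref{prop:EquivDefHC} for the data $(\NT(s\oPo), d_\Po + d_\N + d_\gamma, \lbrace \Delta_t \rbrace_{t\in\Tree^{(\ge 2)}})$, where the $\Delta_t$ are the cofree structure maps of Proposition~\ref{prop:CofreeHoCoop}. Since that proposition already provides the homotopy cooperad $(\NT(s\oPo), d_\Po + d_\N, \lbrace\Delta_t\rbrace)$ as the cofree homotopy cooperad on the dg $\Sy$-module $s\oPo$, the task reduces to showing that the further addition of $d_\gamma$ to the differential preserves the homotopy cooperad axioms.

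First, I would verify that the total differential $d := d_\Po + d_\N + d_\gamma$ squares to zero. Expanding $d^2$ and using the identities $d_\Po^2 = 0$, $d_\N^2 = 0$, and $d_\Po d_\N + d_\N d_\Po = 0$ from the cofree case, the verification reduces to three cross-terms: $d_\gamma^2 = 0$, $d_\Po d_\gamma + d_\gamma d_\Po = 0$, and $d_\N d_\gamma + d_\gamma d_\N = 0$. The first follows from the associativity of the operadic composition $\gamma$ of $\Po$, exactly as in the classical bar construction: contracting two innermost nests in succession produces the same composite in $\Po$ regardless of order, so the two orderings pair up with opposite signs. The second follows from $d_\Po$ being a derivation with respect to $\gamma$.

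The third relation $d_\N d_\gamma + d_\gamma d_\N = 0$ is the combinatorially delicate one. I would proceed by case analysis on the interaction between the nest $T_j$ forgotten by $d_\N$ (as part of a consecutive pair $T_j \subsetneq T_i$) and the innermost nest contracted by $d_\gamma$. When these two nests are unrelated---that is, when the contracted nest lies neither above nor below the pair $(T_i, T_j)$---the two operations manifestly commute and the two orderings cancel. When they interact---typically when the forgotten nest $T_j$ is adjacent to or contains the contracted innermost nest---one uses the definition of the nest degree $|T_i| = 2 - \#t_i$ together with the Koszul sign rule to reconcile the two orderings, again producing cancellation.

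Finally, I would check that $d_\gamma$ is compatible with the cofree structure maps $\Delta_t$ in the form required by Proposition~\ref{prop:EquivDefHC}. Since $\Delta_t$ decomposes a nested tree along its outermost (full) nest, while $d_\gamma$ acts on innermost nests and leaves the outermost nest structure intact, the two operations act on disjoint features of the tree and thus commute up to sign. The main obstacle is the precise sign verification for the identity $d_\N d_\gamma + d_\gamma d_\N = 0$, which requires tracking how the total order on nests and the assigned nest degrees interact with the operations of contraction and forgetting.
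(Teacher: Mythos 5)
The paper itself dismisses this as ``a straightforward calculation,'' so there is no detailed argument to compare against; but your proposed organization of that calculation contains a genuine error. You claim that $d_\gamma^2=0$ and $d_\N d_\gamma+d_\gamma d_\N=0$ hold as \emph{separate} identities. Neither does. The point is that $d_\gamma$ only contracts \emph{innermost} nests, which breaks the symmetry that makes the analogous statement true for the classical bar construction. Concretely, take a nested tree with consecutive nests $T_j\subsetneq T_i$ where $T_j$ is innermost and is the only nest properly contained in $T_i$. Then $d_\gamma^2$ contains the term (contract $T_j$, then contract the newly innermost $T_i$); there is no second ordering available inside $d_\gamma^2$ to cancel it, since $T_i$ is not innermost in the original tree. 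Likewise $d_\gamma d_\N$ contains the term (forget $T_j$, then contract the newly innermost $T_i$), with no partner inside $d_\N d_\gamma+d_\gamma d_\N$, because forgetting a nest composes nothing and so cannot reproduce the fully composed vertex by the other ordering. These two orphaned terms produce the \emph{same} labeled nested tree -- by associativity of $\gamma$, composing $t_j$ first and then the contracted $t_i$ equals composing the full subtree $t_i$ at once -- and they cancel \emph{against each other}. So the correct statement is that $d_\gamma^2+(d_\N d_\gamma+d_\gamma d_\N)=0$ jointly, exactly as in a simplicial bar construction where ``compose'' and ``forget'' face maps mix; your cancellation argument for $d_\gamma^2$ (``the two orderings pair up'') only covers disjoint innermost nests and misses the nested case entirely.

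A second, smaller soft spot: your claim that $d_\gamma$ and the structure maps $\Delta_t$ ``act on disjoint features of the tree and thus commute up to sign'' fails precisely when the full nest $T_1$ is itself innermost (i.e.\ $N=1$), in which case $d_\gamma$ contracts $T_1$ to a corolla while $\Delta_{t_1}$ decomposes along it; the resulting terms must be tracked in the quadratic relation of Proposition~\ref{prop:EquivDefHC} rather than dismissed as commuting. The remaining ingredients of your plan ($d_\Po^2=0$, $d_\N^2=0$, the anticommutation of $d_\Po$ with $d_\N$ and with $d_\gamma$ via the derivation property, and the cofree axioms already established in Proposition~\ref{prop:CofreeHoCoop}) are fine. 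To repair the proof, regroup the verification of $d^2=0$ so that the contraction--contraction and forget--contraction terms are treated together, and handle the $N=1$ case of the $\Delta_t$-compatibility explicitly.
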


\begin{proof}
Checking this is a straightforward calculation.
\end{proof}

\begin{defi}
The \emph{cobar construction} $\Omega_\pi \Cc$ of a homotopy cooperad $\Cc$ is the augmented dg operad $\Omega_\pi \Cc := (\free(s^{-1}\Cc), d)$. 
\end{defi}

\begin{thm}\label{thm:PiBarCobarAdj}
There are natural bijections 
$$ \Hom_{\mathsf{dg} \, \mathsf{op}}(\Omega_\pi \Cc, \Po)\cong  \Tw_\infty(\Cc, \Po)\cong 
\Hom_{\mathsf{coop}_\infty}(\Cc, \B_\pi \Po) \ .$$
In plain words, the pair of functors $\Omega_\pi$ and $\B_\pi$ are adjoint and this adjunction is represented by the twisting morphism bifunctor. 
\end{thm}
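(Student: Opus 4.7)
The plan is to establish both bijections by a pattern analogous to the classical bar-cobar adjunction \cite[Section~$11.1$]{LodayVallette10}, adapted to the homotopy cooperad setting. The intermediate set $\Tw_\infty(\Cc, \Po)$ consists of Maurer-Cartan elements of the convolution $L_\infty$-algebra on $\Hom_\Sy(\Cc, \Po)$, whose brackets are defined, by direct analogy with Theorem~$28$ of \cite{MerkulovVallette09I}, by
$$\ell_n(f_1, \ldots, f_n) := \sum_{t \in \Tree^{(n)},\, \sigma \in \Sy_n} \pm\, \gamma_\Po \circ t(f_{\sigma(1)}, \ldots, f_{\sigma(n)}) \circ \Delta_t\ ,$$
combining the operad composition $\gamma_\Po$ of $\Po$ with the homotopy cooperad structure maps $\{\Delta_t\}$ of $\Cc$. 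The $L_\infty$-relations are a direct consequence of the nested-tree compatibility satisfied by the $\Delta_t$ (Proposition~\ref{prop:EquivDefHC}) together with the associativity of $\gamma_\Po$; the signs are the Koszul signs.

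For the first bijection, by the universal property of the free operad, a morphism of augmented graded operads $F: \free(s^{-1}\Cc) \to \Po$ is determined by its restriction $s^{-1}\Cc \to \Po$, equivalently a degree $-1$ morphism $\alpha: \Cc \to \Po$. I would unpack the condition $F \circ d = d_\Po \circ F$ by projecting onto $\Po$: on the weight-$n$ summand $\free(s^{-1}\Cc)^{(n)}$, the differential $d$ of $\Omega_\pi \Cc$ (induced by the homotopy cooperad structure maps $\Delta_t$ for $t \in \Tree^{(n)}$) produces exactly the $n$-th bracket term, so the total equation reads
$$\sum_{n \geq 1} \tfrac{1}{n!}\ell_n(\alpha, \ldots, \alpha) = 0\ ,$$
the generalized Maurer-Cartan equation.

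For the second bijection, the underlying graded $\Sy$-module of $\B_\pi \Po$ is the cofree homotopy cooperad $\NT(s\overline{\Po})$, so by Proposition~\ref{prop:CofreeHoCoop} any strict morphism of graded homotopy cooperads $G: \Cc \to \NT(s\overline{\Po})$ is uniquely determined by its projection $g: \Cc \to s\overline{\Po}$ onto the space of cogenerators, via $G = \NT(g) \circ \Delta^{\mathrm{iter}}$. Setting $\alpha := s^{-1} g$ (degree $-1$), I would verify that compatibility of $G$ with the three differentials on $\B_\pi \Po$ translates into a single equation on $\alpha$: the summands $d_\Po$ and $d_\N$ assemble into the internal $\ell_1$-term on $\Hom_\Sy(\Cc, \Po)$, while the additional differential $d_\gamma$, which contracts innermost nests via the operad composition $\gamma_\Po$, produces when summed over all nested-tree expansions exactly the higher brackets $\ell_n(\alpha, \ldots, \alpha)$ for $n \geq 2$. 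Together these conditions recover the same Maurer-Cartan equation, establishing the second bijection and closing the triangle.

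The principal obstacle is the combinatorial bookkeeping in the second bijection: one must verify that the sum over innermost-nest contractions in $d_\gamma$, composed with the iterated decomposition $\Delta^{\mathrm{iter}}$, reproduces the Koszul-signed symmetrized sum defining each bracket $\ell_n$, and in particular that the total order on nests introduced in Section~\ref{subsec:Trees} generates precisely the signs and the combinatorial factors $1/n!$ arising from symmetrization over $\Sy_n$ in the generalized Maurer-Cartan equation. Once this sign analysis is carried out, the naturality of both bijections is automatic from their description in terms of the single datum $\alpha \in \Hom_\Sy(\Cc, \Po)$.
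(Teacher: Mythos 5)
Your proposal is correct and follows essentially the same route as the paper: the first bijection is exactly the content of Theorem~54 of \cite{MerkulovVallette09I} (which the paper simply cites, while you sketch its proof via the universal property of the free operad), and the second bijection is obtained, as in the paper, from the cofreeness of $\NT(s\oPo)$ in Proposition~\ref{prop:CofreeHoCoop} followed by translating compatibility with the remaining differentials into the Maurer-Cartan equation. The only minor imprecision is attributing part of $\ell_1$ to $d_\N$ --- commutation with $d_\N$ is already automatic for any morphism into the cofree homotopy cooperad, so the extra condition concerns only $d_\Po+d_\gamma$ --- but this does not affect the argument.
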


\begin{proof}
The first natural bijection is given by  \cite[Theorem~$54$]{MerkulovVallette09I}. The second one is described as follows. 
Proposition~\ref{prop:CofreeHoCoop} already provides us with a natural bijection 
$$\Hom_{\mathsf{coop}_\infty}\big(\Cc, (\NT(s\oPo), d_\mathcal{N},  \lbrace \Delta_t \rbrace_{t\in \Tree^{(\ge 2)}}    )\big)\cong \Hom_\Sy(\Cc, s\oPo), \quad F \mapsto f\ .$$ 
Under this bijection, a morphism of $\Sy$-modules $f : \Cc \to s\oPo$ induces a morphism of homotopy cooperads 
$F : \Cc \to \B_\pi \Po$ if and only if the following diagram commutes 
$$\xymatrix@C=40pt{\Cc \ar[r]^(0.4)F \ar[d]^{d_\Cc} & \NT(s\oPo) \ar[r]^{d_\Po +d_\gamma} & \NT(s\oPo) \ar@{->>}[d] \\
\Cc \ar[rr]^f & & s\oPo \ .} $$
This last condition is equivalent to $f d_\Cc= d_\Po f + \sum_{t\in \Tree^{(\ge 2)}} {\gamma} \circ t(f) \circ \Delta_t$, which is exactly the Maurer-Cartan equation 
$$\sum_{n \ge 1} {\footnotesize \frac{1}{n!}}\, \ell_n(s^{-1}f, \ldots, s^{-1}f) =0$$ satisfied by $s^{-1}f$ in the convolution $L_\infty$-algebra $\Hom_\Sy(\Cc, \Po)$. 
\end{proof}

\begin{remark}
The universal operadic twisting morphism $\pi : \B ({S} As) \to {S} As$ induces a pair of adjoint functors $\B_\pi$ and $\Omega_\pi$ between the category of dg associative algebras and the category of homotopy coalgebras by \cite{GetzlerJones94}, see also \cite[Chapter~$11$]{LodayVallette10}.
One can prove that it coincides with the restriction of the above bar and cobar constructions $\B_\pi$ and $\Omega_\pi$  to $\Sy$-modules concentrated in arity one, which  explains the notation.
\end{remark}

\section{Homotopy transfer theorem}

In this section, we prove the homotopy transfer theorem and the rectification theorem for skeletal homotopy BV-algebras. 

\subsection{Universal morphism of homotopy cooperads}
Let $(H, d_H)$ be a homotopy retract of a chain complex $(A, d_A)$:
\begin{eqnarray*}
&\xymatrix{     *{ \quad \ \  \quad (A, d_A)\ } \ar@(dl,ul)[]^{h}\ \ar@<0.5ex>[r]^-{p} & *{\
(H,d_H) \ . \quad \ \  \ \quad }  \ar@<0.5ex>[l]^-{i}}&
\end{eqnarray*}

Recall that the homotopy transfer theorem for homotopy algebras over a Koszul operad of 
\cite[Appendix~$\textrm{B}.3$]{GCTV09} and of \cite[Section~$10.3$]{LodayVallette10}
relies on the classical bar-cobar adjunction 
$$ \Hom_{\mathsf{dg} \, \mathsf{op}}(\Omega\,  \Po^{\ac},   \End_A)\cong  \Tw(\Po^{\ac}, \End_A)\cong 
\Hom_{\mathsf{dg} \, \mathsf{coop}}(\Po^{\ac}, \B\,  \End_A) $$
and on the quasi-isomorphism of dg cooperads 
$$\Psi : \B\, \End_A \qi \B\, \End_H$$ introduced by P. Van der Laan in \cite{VanDerLaan03}, see also \cite[Section~$10.3.3$]{LodayVallette10}. Such a map is characterized by its projection $\B\, \End_A =\cofree (s \End_A) \to s \End_H $ onto the space of generators. The Van der Laan map $\Psi$ is explicitly given by labeling the leaves of every tree by the map $i$, the root by the map $p$ and the interior edges by the homotopy $h$.

We consider the map $G(\End_A) : \B\, \End_A \to \B_\pi \End_A$ defined, for any $t\in \Tree$,  by 
$$t(sf_n, \ldots, s f_1)\in \cofree(s\End_A) \mapsto \sum \pm\,  t(T_1, \ldots, T_{n-1}; sf_n, \ldots, sf_1)\in \NT(s\End_A) \ , $$
 where the sum runs over all the maximal nestings, that is the ones with a maximal number of nests.  Since the bar construction $\B \, \End_A$ is a cooperad, it carries a homotopy cooperad structure; the map $G(\End_A)$ is a quasi-isomorphism of homotopy cooperads. 

\begin{prop}\label{prop:PHI}
Let $(H, d_H)$ be a homotopy retract of a chain complex $(A, d_A)$. 
There exists a quasi-isomorphism of homotopy cooperads 
$$\Phi : \B_\pi \End_A \qi \B_\pi \End_H $$
such that the following diagram, made up of quasi-isomorphisms of homotopy cooperads, is commutative,
$$\xymatrix@C=45pt{\B \, \End_A   \ar[r]^{G(\End_A)} \ar[d]^{\Psi} &  \B_\pi \End_A \ar[d]^{\Phi} \ \\
\B \, \End_H    \ar[r]^{G(\End_H)} &  \B_\pi \End_H \ .} $$
\end{prop}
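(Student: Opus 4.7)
The plan is to invoke the homotopy bar-cobar adjunction of Theorem~\ref{thm:PiBarCobarAdj} to reduce the construction of $\Phi$ to that of a twisting morphism. Specifying a morphism of homotopy cooperads $\Phi : \B_\pi \End_A \to \B_\pi \End_H$ is equivalent to specifying an element $\alpha_H \in \Tw_\infty(\B_\pi \End_A, \End_H)$, that is, a degree $-1$ morphism of $\Sy$-modules $\alpha_H : \B_\pi \End_A \to \End_H$ satisfying the generalised Maurer-Cartan equation in the convolution $L_\infty$-algebra $\Hom_\Sy(\B_\pi \End_A, \End_H)$.

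I would define $\alpha_H$ on a decorated nested tree $\tau = t(T_1, \ldots, T_N; sf_1, \ldots, sf_n)$ by labelling the tree $t$ with the data of the homotopy retract: the root edge receives $p$, each leaf receives $i$, and the internal edges receive the homotopy $h$ at positions prescribed by the nesting $T_\bullet$. This mirrors the Van der Laan construction underlying $\Psi$, but the nesting controls where and how the $h$'s are inserted. On a one-vertex nested tree, $\alpha_H(sf) = p \circ f \circ i^{\otimes n}$; for nested trees with more vertices, the rule (and the combinatorial coefficients) must be tuned so that both (a) the sum of $\alpha_H$ over the maximal nestings of a fixed $t$ reproduces the Van der Laan operation with $h$ on every internal edge of $t$, and (b) the Maurer-Cartan equation holds.

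The core verification is that $\alpha_H$ satisfies $\partial \alpha_H + \sum_{k \ge 2} \frac{1}{k!} \ell_k(\alpha_H, \ldots, \alpha_H) = 0$. The differential $\partial \alpha_H$ decomposes according to the three pieces $d_{\End_A}$, $d_{\mathcal N}$ and $d_\gamma$ of the differential on $\B_\pi \End_A$, while the terms $\ell_k(\alpha_H, \ldots, \alpha_H)$ organise themselves along the homotopy cooperad structure maps $\Delta_t$ followed by the operadic composition in $\End_H$. The crucial input is the homotopy retract identity $\id_A - ip = d_A h + h d_A$, which pairs each insertion of $ip$ (coming from the erasure of an innermost nest by $d_{\mathcal N}$ and the concomitant composition by $d_\gamma$) against a contribution of $\partial \alpha_H$, exactly in the spirit of Van der Laan's original verification.

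For the commutativity of the square, I would apply the adjunction a second time: the two composites $\Phi \circ G(\End_A)$ and $G(\End_H) \circ \Psi$ correspond, under the bar-cobar adjunctions, to twisting morphisms $\B \End_A \to \End_H$, and unwinding the definitions (using that $\pi_H \circ G(\End_H) = \pi_H$ picks out only the corolla summand) both reduce to the classical Van der Laan twisting morphism. This identification at the level of twisting morphisms is precisely the matching condition (a) imposed above and makes the square commute tautologically. The quasi-isomorphism assertion for $\Phi$ then follows from two-out-of-three in the commutative square, since $\Psi$, $G(\End_A)$ and $G(\End_H)$ are already known to be quasi-isomorphisms. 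The main obstacle will be finding the precise decoration rule for $\alpha_H$ on non-maximal nestings: the combinatorics of nested trees combined with the signs of $d_{\mathcal N}$ and of the brackets $\ell_k$ make the verification delicate, and getting coefficients and sign conventions right so that (a) and (b) hold \emph{simultaneously} is the technical heart of the proof.
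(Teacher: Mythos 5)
Your overall architecture is sound and matches the paper's framework at the formal level: you correctly invoke Theorem~\ref{thm:PiBarCobarAdj} to identify $\Hom_{\mathsf{coop}_\infty}(\B_\pi \End_A, \B_\pi \End_H)$ with $\Tw_\infty(\B_\pi\End_A,\End_H)$, you correctly identify that commutativity of the square only needs to be checked after projection onto the cogenerators $s\End_H$ (where $G(\End_H)$ reduces to the identity on corollas, so the right-hand composite projects to the Van der Laan map), and the two-out-of-three argument for the quasi-isomorphism property is fine. But there is a genuine gap, and you name it yourself: the existence of a degree $-1$ element $\alpha_H$ of $\Hom_\Sy(\B_\pi\End_A,\End_H)$ satisfying \emph{both} the generalized Maurer--Cartan equation \emph{and} the compatibility with $\Psi$ on maximal nestings is the entire content of the proposition, and ``the rule and the combinatorial coefficients must be tuned'' is a statement of the problem, not a solution. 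In particular you give no prescription for $\phi$ on non-maximal nestings (where the degree count forces fewer insertions of $h$ than there are internal edges, so one must decide which compositions happen ``raw''), and no argument that \emph{any} choice of coefficients makes the $d_{\mathcal N}$- and $d_\gamma$-contributions cancel against the $\ell_k$-terms. As written, the proof does not establish existence.

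The paper avoids exactly this combinatorial construction by an abstract argument. It observes that $G(\End_A)$ is induced by a quasi-isomorphism of cooperads $G : As^c \qi \B(\mathcal{S}As)$ (in arity one; in general one uses Van der Laan's colored Koszul operad encoding operads), applies the Comparison Lemma to get a quasi-isomorphism of quasi-free left $\mathcal{S}As$-modules, and uses the left lifting property to produce a homotopy inverse $F$. Then $\Phi$ is \emph{defined} as the image, under the homotopy bar--cobar adjunction, of the composite of operad morphisms $\Omega_\pi\B_\pi\End_A \xrightarrow{F\circ_{\mathcal{S}As}s\End_A} \Omega\B\,\End_A \xrightarrow{\widetilde{\Psi}} \End_H$; the Maurer--Cartan equation holds automatically because this composite is a morphism of dg operads, and the commutativity of the square holds by construction of $F$. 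If you want to keep your explicit route, you would need either to actually exhibit the decoration rule and verify the Maurer--Cartan equation (a substantial computation the paper never performs --- it only records a posteriori that $\phi$ labels all internal edges by $h$ on maximal nestings), or to import an existence mechanism such as the lifting argument above.
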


\begin{proof}
Let us first give the proof in arity $1$; so here $\End_A=\Hom(A,A)$. We consider the quasi-isomorphism of cooperads $G : As^c \qi B ({\mathcal S} As)$. The map $G(\End_A)$ is equal to 
$$G(\End_A) = G \circ \id \ : \ As^c \circ_{\kappa'} {\mathcal S} As  \circ_{{\mathcal S} As} s\End_A     \qi B ({\mathcal S} As) \circ_\pi  {\mathcal S} As \circ_{{\mathcal S} As} s\End_AÊ\ , $$
where $\kappa' :={\mathcal S} \kappa : As^c = {\mathcal S} As^{\ac} \to {\mathcal S} As$ is the Koszul morphism coming from the Koszul duality of the operad $As$.  By the Comparison Lemma \cite[Lemma~$6.4.13$]{LodayVallette10}, the quasi-isomorphism $G$ induces a quasi-isomorphism 
$$\id \circ \,  G \circ \id \ : \ {\mathcal S} As \circ_{\kappa'} As^c \circ_{\kappa'} {\mathcal S} As       \qi {\mathcal S} As \circ _\pi B ({\mathcal S} As) \circ_\pi  {\mathcal S} As  $$
of quasi-free left ${\mathcal S} As$-modules (or equivalently of quasi-free anti-associative algebras in the category of $\Sy$-modules). 
By the left lifting property, it admits a homotopy inverse quasi-isomorphism 
$$F \ : \  {\mathcal S} As \circ _\pi B ({\mathcal S} As) \circ_\pi  {\mathcal S} As  \qi  {\mathcal S} As \circ_{\kappa'} As^c \circ_{\kappa'} {\mathcal S} As\ . $$
Under the bar-cobar adjunction,  the quasi-isomorphism of cooperads $\Psi$ is equivalent to the quasi-isomorphism 
of operads $\widetilde{\Psi} :  \Omega \B\,  \End_A \qi \End_H$. 
Finally, we define the morphism of homotopy cooperads $\Phi : \B_\pi \End_A \qi \B_\pi \End_H$ to be the map corresponding to the quasi-isomorphism of operads 
$$ \Omega_\pi \B_\pi \End_A \xrightarrow{F\circ_{{\mathcal S} As} s\End_A} \Omega \B\,  \End_A \xrightarrow{\widetilde \Psi} \End_H $$
under the homotopy bar-cobar adjunction. 

One extends these arguments to higher arity by using the colored Koszul operad  of \cite{VanDerLaan03}, which encodes operads, instead of the Koszul (non-symmetric) operad $As$ which encodes associative algebras. 

By definition, the following diagram is commutative 
$$\xymatrix@C=60pt{\B_\pi \End_A   \ar[r]^{F \circ_{S As} s\End_A}  \ar@/^2pc/[rr]^{\Phi}   &   \Omega \B \ \End_A \ar[r]^{\widetilde{\Psi}}   &  \End_H \\   
\B \ \End_A  \  , \ar[u]^{G(\End_A)}   \ar@{>->}[ur]  \ar@/_1pc/[urr]_{\Psi} &&  } $$
which concludes the proof. 
\end{proof}

The morphism of homotopy cooperads 
$\Phi : \B_\pi \End_A \qi \B_\pi \End_H $ is completely characterized by its projection onto the space of cogenerators, which we denote by 
$\phi : \NT(s\End_A) \to s\End_H$.

\subsection{Homotopy transfer theorem for skeletal homotopy BV-algebras}

\begin{thm}\label{thm:HTT}
Let A be a skeletal homotopy BV-algebra and let 
$(H, d_H)$ be a homotopy retract of the chain complex $(A, d_A)$:
\begin{eqnarray*}
&\xymatrix{     *{ \quad \ \  \quad (A, d_A)\ } \ar@(dl,ul)[]^{h}\ \ar@<0.5ex>[r]^-{p} & *{\
(H,d_H)\ . \quad \ \  \ \quad }  \ar@<0.5ex>[l]^-{i}}&
\end{eqnarray*}
There is a skeletal homotopy BV-algebra on $(H,d_H)$, which extends the transferred operations $p \widetilde{\mu} i^{\otimes n}$, for any $\mu \in  {\cH}$. If we denote by $\alpha \in \Tw_\infty({\cH}, \End_A)$ the skeletal homotopy BV-algebra structure on $A$, such a transferred skeletal homotopy BV-algebra structure on $H$ is given by 
$${\cH}\xrightarrow{\Delta^{\textrm{iter}}}  \NT({\cH})   \xrightarrow{\NT(s \alpha)} \NT( s\End_A) \xrightarrow{s^{-1}\phi} \End_H \ .$$
\end{thm}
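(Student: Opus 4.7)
The plan is to apply the homotopy bar-cobar adjunction of Theorem~\ref{thm:PiBarCobarAdj} on both sides and use the universal transfer morphism $\Phi$ of Proposition~\ref{prop:PHI} as a bridge between $\B_\pi \End_A$ and $\B_\pi \End_H$. By Proposition~\ref{prop:ReducedHBV-TW}, the given skeletal homotopy BV-algebra structure on $A$ corresponds to the twisting morphism $\alpha \in \Tw_\infty(\cH, \End_A)$. Under the adjunction of Theorem~\ref{thm:PiBarCobarAdj}, this is in turn equivalent to a morphism of homotopy cooperads $\widetilde{\alpha} : \cH \to \B_\pi \End_A$. The universal property of the cofree homotopy cooperad (Proposition~\ref{prop:CofreeHoCoop}) identifies $\widetilde{\alpha}$ with $\NT(s\alpha) \circ \Delta^{\textrm{iter}}$, since such a morphism is determined by its projection $s\alpha$ to the space of cogenerators $s\End_A$.

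Next, I post-compose with the quasi-isomorphism of homotopy cooperads $\Phi : \B_\pi \End_A \qi \B_\pi \End_H$ from Proposition~\ref{prop:PHI} to obtain a morphism of homotopy cooperads $\Phi \circ \widetilde{\alpha} : \cH \to \B_\pi \End_H$. Running Theorem~\ref{thm:PiBarCobarAdj} in reverse and applying Proposition~\ref{prop:ReducedHBV-TW} again, this yields a twisting morphism $\beta \in \Tw_\infty(\cH, \End_H)$ and hence a skeletal homotopy BV-algebra structure on $(H, d_H)$. To extract the explicit formula announced in the statement, I project $\Phi \circ \widetilde{\alpha}$ onto the cogenerators $s\End_H$ of $\B_\pi \End_H$; by definition, $\phi$ is precisely this projection component of $\Phi$, so the composite equals $\phi \circ \NT(s\alpha) \circ \Delta^{\textrm{iter}}$, and a single desuspension produces $\beta = s^{-1}\phi \circ \NT(s\alpha) \circ \Delta^{\textrm{iter}}$ as claimed.

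It remains to check that $\beta$ extends the naive transferred operations $p \widetilde{\mu}\, i^{\otimes n}$. For $\mu \in \cH(n)$, the summand of $\Delta^{\textrm{iter}}(\mu)$ supported on a single corolla (no proper nests) is $\mu$ itself viewed as a corolla of $\NT(\cH)$; the map $\NT(s\alpha)$ sends this summand to the corolla $s\widetilde{\mu} \in \NT(s\End_A)$. The commutative square of Proposition~\ref{prop:PHI}, together with the explicit form of the Van der Laan morphism $\Psi$ (labeling the leaves of a tree by $i$, the root by $p$, and the interior edges by $h$), forces the restriction of $\phi$ to single corollas to act by $sf \mapsto s(p f i^{\otimes n})$. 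Consequently the corolla contribution to $\beta(\mu)$ is exactly $p \widetilde{\mu}\, i^{\otimes n}$, and the contributions from nested trees with at least two vertices supply the higher correction terms that make $\beta$ into a genuine twisting morphism. The essential homotopical content of the theorem is packaged into Proposition~\ref{prop:PHI}; the remaining verifications are purely diagrammatic manipulations of the adjunctions and universal properties established in Sections~3 and~5, and the main obstacle was therefore already overcome in the construction of $\Phi$.
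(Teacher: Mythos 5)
Your proposal is correct and follows essentially the same route as the paper: convert $\alpha$ into a morphism of homotopy cooperads $F_\alpha:\cH\to\B_\pi\End_A$ via the homotopy bar-cobar adjunction of Theorem~\ref{thm:PiBarCobarAdj}, push forward along $\Phi$ from Proposition~\ref{prop:PHI}, and read off the resulting twisting morphism through the cogenerator projection $\phi$. Your extra verification that the corolla component recovers $p\widetilde{\mu}\,i^{\otimes n}$ (via the commutative square of Proposition~\ref{prop:PHI} and the explicit form of the Van der Laan map) is left implicit in the paper but is a correct and welcome addition.
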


\begin{proof}
We apply  the bar-cobar adjunction of Theorem~\ref{thm:PiBarCobarAdj} to 
$$ \Hom_{\mathsf{dg} \, \mathsf{op}}(\Omega_\pi{\cH},   \End_A)\cong  \Tw_\infty({\cH}, \End_A)\cong 
\Hom_{\mathsf{coop}_\infty}({\cH}, \B_\pi \End_A) \ .$$
So a skeletal homotopy BV-algebra structure $\alpha :{\cH}\to  \End_A$ on $A$ is equivalently given by a morphism of homotopy cooperads $F_\alpha :{\cH} \to \B_\pi \End_A$. The transferred skeletal homotopy BV-algebra on $H$ is then obtained by pushing along the morphism $\Phi$: 
$$\Phi \circ F_\alpha  : {\cH}\to \B_\pi \End_A \to \B_\pi \End_H
\ ,$$
which is equivalent to the following twisting morphism 
$${\cH}\xrightarrow{\Delta^{\textrm{iter}}}  \NT({\cH})   \xrightarrow{\NT(s \alpha)} \NT( s\End_A) \xrightarrow{s^{-1}\phi} \End_H \ .$$
\end{proof}

\begin{remark}
We proved the homotopy transfer theorem for homotopy BV-algebras, i.e. for the Koszul model $\BVK$ of the operad $\BV$ in \cite[Theorem~$33$]{GCTV09}. Since the $\Sy$-module of generators of the minimal model $BV_\infty$ forms a homotopy cooperad and not a cooperad, we cannot apply the arguments of \cite[Appendix~$\textrm{B}.3$]{GCTV09} and of \cite[Section~$10.3$]{LodayVallette10} based on the classical bar-cobar adjunction. Neither can we use the homological perturbation lemma of \cite{Berglund09}. Notice that the existence of the homotopy transferred structure follows from model category arguments by \cite{Rezk96, BergerMoerdijk03}. But we need here an explicit formula for the application to Frobenius manifolds in the next section.
\end{remark}

Needless to say that the Homotopy Transfer Theorem~\ref{thm:HTT} holds for any algebras over a quasi-free operad generated by a homotopy cooperad. In the case of a quasi-free operad generated by a dg cooperad, Koszul models or bar-cobar resolutions for instance, we recover the formulae of \cite{GCTV09} and of \cite[Chapter~$10$]{LodayVallette10} as follows.

\begin{prop} Let $\Po$ be a Koszul operad, eventually inhomogeneous.  Let $A$ be a homotopy $\Po$-algebra and let  $(H, d_H)$ be a homotopy retract of the chain complex $(A, d_A)$.

The transferred homotopy $\Po$-algebra structure on $H$ given by \cite[Theorem~$47$]{GCTV09} and by  \cite[Theorem~$10.3.6$]{LodayVallette10} is equal to the transferred homotopy $\Po$-algebra structure on $H$ given by Theorem~\ref{thm:HTT}.
\end{prop}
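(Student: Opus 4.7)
The plan is to express both transferred structures as twisting morphisms coming from a common expression on the bar side, and then to invoke the commutative square of Proposition~\ref{prop:PHI} to identify them.

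First I would observe that when $\Po$ is Koszul, its Koszul dual $\Po^{\antishriek}$ is a strict dg cooperad, which can be viewed as a homotopy cooperad with vanishing higher decomposition maps $\Delta_t=0$ for trees $t$ with at least three vertices. Accordingly, a homotopy $\Po$-algebra structure $\alpha : \Po^{\antishriek} \to \End_A$ produces, under the classical and the new bar-cobar adjunctions respectively, two compatible morphisms
$$F_\alpha^{\mathrm{cl}} : \Po^{\antishriek} \to \B\, \End_A \quad \text{and} \quad F_\alpha^{\pi} : \Po^{\antishriek} \to \B_\pi \End_A,$$
related by $F_\alpha^{\pi} = G(\End_A) \circ F_\alpha^{\mathrm{cl}}$. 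This compatibility is a direct unwinding of definitions: both maps are generated, as (homotopy) cooperad morphisms, by the same projection $\alpha$ onto cogenerators, and for a strict cooperad the iterated decomposition map $\Delta^{\textrm{iter}}$ lands precisely in the subspace of maximally nested trees, which is the image of $G$.

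Next I would apply Proposition~\ref{prop:PHI} to rewrite the new transfer via the classical one:
$$\Phi \circ F_\alpha^{\pi} \;=\; \Phi \circ G(\End_A) \circ F_\alpha^{\mathrm{cl}} \;=\; G(\End_H) \circ \Psi \circ F_\alpha^{\mathrm{cl}}.$$
By the classical bar-cobar adjunction, $\Psi \circ F_\alpha^{\mathrm{cl}}$ corresponds to the Van~der~Laan transferred structure of \cite[Theorem~$47$]{GCTV09} and \cite[Theorem~$10.3.6$]{LodayVallette10}. By Theorem~\ref{thm:PiBarCobarAdj}, the map $\Phi \circ F_\alpha^{\pi}$ corresponds to the transferred structure of Theorem~\ref{thm:HTT}.

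The concluding step is the projection onto the cogenerators $s\End_H$. The map $G(\End_H) : \B\, \End_H \to \B_\pi \End_H$ is the identity on the one-vertex corolla summand, which contains the cogenerators of both bar constructions, so the projections of $G(\End_H) \circ \Psi \circ F_\alpha^{\mathrm{cl}}$ and of $\Psi \circ F_\alpha^{\mathrm{cl}}$ onto $s\End_H$ agree. This shows that the two transferred twisting morphisms on $H$ coincide. The main point requiring care is the identity $F_\alpha^{\pi} = G(\End_A) \circ F_\alpha^{\mathrm{cl}}$: it boils down to verifying that, for a strict dg cooperad, iterating the binary decomposition along all admissible nested trees reproduces $\Delta^{\textrm{iter}}$, a combinatorial bookkeeping rather than a genuine obstacle.
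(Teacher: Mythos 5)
Your proposal is correct and is essentially the paper's own argument: the paper's proof consists precisely of the commutative diagram whose outer squares are your identities $F_\alpha^{\pi}=G(\End_A)\circ F_\alpha^{\mathrm{cl}}$ and the compatibility of $G(\End_H)$ with projection onto cogenerators, and whose middle square is the one from Proposition~\ref{prop:PHI}. You have simply unwound that diagram element-by-element, including the correct observation that for a strict cooperad $\Delta^{\textrm{iter}}$ lands in maximal nestings.
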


\begin{proof}
The proof relies on the following diagram being commutative:
$$\xymatrix@C=16pt{
\Tw(\Po^{\ac}, \End_A) \ar@{=}[d]  \ar[r]^(0.4){\cong} &  \Hom_{\mathsf{dg} \, \mathsf{coop}}(\Po^{\ac}, \B\,  \End_A) \ar[d]^{G(\End_A)_*} \ar[r]^{\Psi_*}  &  
\Hom_{\mathsf{dg} \, \mathsf{coop}}(\Po^{\ac}, \B\,  \End_H) \ar[d]^{G(\End_H)_*}  
\ar[r]^(0.58){\cong} & \Tw(\Po^{\ac}, \End_H) \ \   \ar@{=}[d]  \\
\Tw_\infty(\oPo^{\ac}, \End_A)  \ar[r]^(0.4){\cong} &  \Hom_{\mathsf{coop}_\infty}(\oPo^{\ac}, \B_\pi  \End_A)  
\ar[r]^{\Phi_*} &  
\Hom_{\mathsf{coop}_\infty}(\oPo^{\ac}, \B_\pi  \End_H) \ar[r]^(0.55){\cong}  & \Tw_\infty(\oPo^{\ac}, \End_H)\ . 
}$$
\end{proof}

The two homotopy transfer theorems for  homotopy BV-algebras and skeletal  homotopy BV-algebras 
commute under the functor 
$ P^* : \textsf{skeletal homotopy BV-algebras} \to \textsf{homotopy BV-algebras}$ as follows. 

\begin{prop}\label{prop:HTT-P*}
Let $(H, d_H)$ be a homotopy retract of a chain complex $(A, d_A)$. Consider a skeletal homotopy BV-algebra structure on $A$. The associated homotopy BV-algebra structure $P^*(A)$ on $A$ transfers to a homotopy BV-algebra to $H$ by  Theorem~$33$ of \cite{GCTV09}. This homotopy BV-algebra structure on $H$ is equal to the homotopy BV-algebra associated, under $P$, to the transferred skeletal BV-algebra given by Theorem~\ref{thm:HTT}.
\end{prop}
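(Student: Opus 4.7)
The plan is to phrase both transfer procedures through the bar-cobar adjunctions available to us, and then deduce the identification from the commuting square of Proposition~\ref{prop:PHI}. Let $\beta : BV_\infty \to \End_A$ be the skeletal homotopy BV-algebra structure on $A$, so that the associated homotopy BV-algebra structure is $\alpha := \beta \circ P : \BVK = \Omega\, \BV^{\ac} \to \End_A$. Under the classical bar-cobar adjunction, $\alpha$ corresponds to a morphism of dg cooperads $f_\alpha : \BV^{\ac} \to \B\, \End_A$, and under the homotopy bar-cobar adjunction of Theorem~\ref{thm:PiBarCobarAdj}, $\beta$ corresponds to a morphism of homotopy cooperads $F_\beta : \cH \to \B_\pi \End_A$.

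The first key step is to show that the relation $\alpha = \beta \circ P$ is equivalent to the commutativity of the square
\[
\xymatrix@C=40pt{
\BV^{\ac} \ar[r]^{f_\alpha} \ar[d]_{p_\infty} & \B\, \End_A \ar[d]^{G(\End_A)} \\
\cH \ar[r]^{F_\beta} & \B_\pi \End_A \ .
}
\]
This is essentially a naturality statement: the map $P : \Omega\, \BV^{\ac} \to \Omega_\pi \cH = BV_\infty$ is by construction the image of the $\infty$-quasi-isomorphism $p_\infty$ under the homotopy bar-cobar correspondence, while $G(\End_A)$ is exactly the map which implements the passage from strict cooperads to homotopy cooperads at the level of bar constructions. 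I would verify this commutativity by unraveling both sides in terms of projections onto the spaces of cogenerators, using that $G(\End_A)$ sends a decorated tree to the sum over its maximal nestings, and that $p_\infty$ is built from the homotopy $h = \delta \otimes H$ via the formulae of Theorem~\ref{thm:HTTCoop}.

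The second step is then immediate from Proposition~\ref{prop:PHI}: stacking the two squares gives
\[
\xymatrix@C=35pt{
\BV^{\ac} \ar[r]^{f_\alpha} \ar[d]_{p_\infty} & \B\, \End_A \ar[r]^{\Psi} \ar[d]_{G(\End_A)} & \B\, \End_H \ar[d]^{G(\End_H)} \\
\cH \ar[r]^{F_\beta} & \B_\pi \End_A \ar[r]^{\Phi} & \B_\pi \End_H \ ,
}
\]
in which the outer rectangle is commutative. By \cite[Theorem~$33$]{GCTV09}, the top composite $\Psi \circ f_\alpha$ corresponds under bar-cobar adjunction to the transferred homotopy BV-algebra structure $\alpha'$ on $H$; by Theorem~\ref{thm:HTT}, the bottom composite $\Phi \circ F_\beta$ corresponds to the transferred skeletal homotopy BV-algebra structure $\beta'$ on $H$. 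Applying once more the characterization of the previous paragraph (now for $H$ in place of $A$), the commutativity of the outer rectangle is exactly the assertion $\alpha' = \beta' \circ P = P^*(\beta')$, which is what we wanted to prove.

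The only real obstacle is the first step, namely identifying $\beta \circ P$ with $F_\beta \circ p_\infty$ composed with the cogenerator projection of $G(\End_A)$: one must check that the higher (non-strict) components of the $\infty$-morphism $p_\infty$ exactly account for the extra trees with nests appearing when one passes from $\B\,\End_A$ to $\B_\pi \End_A$. This is essentially a book-keeping argument between the two adjunctions, but it is the bridge that makes the whole diagram work; once it is in place, the rest reduces to invoking Proposition~\ref{prop:PHI}.
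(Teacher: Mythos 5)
Your proposal is correct and follows essentially the same route as the paper: the paper's proof consists precisely of the commutativity of the large diagram of morphism sets obtained by applying $\Hom(-,\B_\pi\End_A)$ and $\Hom(-,\B_\pi\End_H)$ to your two stacked squares, with $P^*$ factored through $p_\infty^*$ and $G(\End)_*$ and the key middle square supplied by Proposition~\ref{prop:PHI}. The "first key step" you single out (that $P^*$ is computed by $G(\End_A)_*$ followed by $p_\infty^*$ under the two adjunctions) is exactly the outer columns of the paper's diagram, which the paper asserts without further elaboration.
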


\begin{proof}
The proof relies on the commutativity of the following diagram:
$$\xymatrix@C=10pt{   
\Hom_{\mathsf{dg} \, \mathsf{op}}(\Omega_\pi \cH,   \End_A)   \ar[d]^{\cong}  \ar@/_10pc/[dddddd]_{P^*}&
\Hom_{\mathsf{dg} \, \mathsf{op}}(\Omega_\pi \cH,   \End_H)   \ar@/^10pc/[dddddd]^{P^*}  \\
\Hom_{\mathsf{coop}_\infty}(\cH, \B_\pi   \End_A)    \ar[r]^{\Phi_*} \ar@{>->}[d]& 
\Hom_{\mathsf{coop}_\infty}(\cH, \B_\pi   \End_H)  \ar@{>->}[d] \ar[u]^{\cong}\\
 \Hom_{\infty\textrm{-}\mathsf{coop}_\infty}(\cH, \B_\pi   \End_A)    \ar[r]^{\Phi_*} \ar[d]^{p_\infty^*}& 
\Hom_{\infty\textrm{-}\mathsf{coop}_\infty}(\cH, \B_\pi   \End_H) \ar[d]^{p_\infty^*}  \\
  \Hom_{\infty\textrm{-}\mathsf{coop}_\infty}(\overline{\BV}^{\ac}, \B_\pi    \End_A)    \ar[r]^{\Phi_*} & 
\Hom_{\infty\textrm{-}\mathsf{coop}_\infty}(\overline{\BV}^{\ac}, \B_\pi    \End_H) \\
 \Hom_{\mathsf{coop}_\infty}(\overline{\BV}^{\ac}, \B_\pi    \End_A)  \ar@{>->}[u]  \ar[r]^{\Phi_*} & 
\Hom_{\mathsf{coop}_\infty}(\overline{\BV}^{\ac}, \B_\pi    \End_H) \ar@{>->}[u]  \\
\Hom_{\mathsf{dg} \, \mathsf{coop}}(\BV^{\ac}, \B\,    \End_A)  \ar@{>->}[u]^{G(\End_A)_*}    \ar[r]^{\Psi_*} & 
\Hom_{\mathsf{dg} \, \mathsf{coop}}(\BV^{\ac}, \B\,    \End_H) \ar[d]^{\cong} \ar@{>->}[u]^{G(\End_H)_*}  &\\
\Hom_{\mathsf{dg} \, \mathsf{op}}(\Omega\, \BV^{\ac},   \End_A)   \ar[u]^{\cong}  &
\Hom_{\mathsf{dg} \, \mathsf{op}}(\Omega\, \BV^{\ac} ,   \End_H)  \ .
}
$$
\end{proof}

\subsection{Rectification theorem for skeletal homotopy BV-algebras}

We proved in \cite[Proposition~$32$]{GCTV09} the following Rectification Theorem:  for any homotopy BV-algebra $A$, there is an $\infty$-quasi-isomorphism $A \stackrel{\sim}{\rightsquigarrow} \Omega_\kappa \B_\iota A$ of homotopy BV-algebras, where $\Omega_\kappa \B_\iota A:=\BV ( \BV^{\ac}( A))$ is a dg BV-algebra. We refer to loc. cit. and to \cite[Chapter~$11$]{LodayVallette10} for more details. 

To every skeletal homotopy BV-algebra $H$, we define its \emph{rectified} dg BV-algebra by 
$$\mathrm{Rec}(H):= \Omega_\kappa \B_\iota P^*(H)\ .$$

\begin{thm}\label{prop:HoEquivTransf}
Let $(H, d_H)$ be a homotopy retract of a chain complex $(A, d_A)$. We consider a dg BV-algebra structure on $A$ together with the transferred skeletal homotopy BV-algebra on $H$ given by Theorem~\ref{thm:HTT}. 
The dg BV-algebra $\mathrm{Rec}(H)$
is homotopy equivalent to $A$ in the category of dg BV-algebras. 
\end{thm}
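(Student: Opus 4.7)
The plan is to combine the homotopy transfer theorem for homotopy BV-algebras from \cite{GCTV09} with the rectification procedure to exhibit an explicit quasi-isomorphism of dg BV-algebras from $\mathrm{Rec}(H)$ to $A$.

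First, I would view the dg BV-algebra structure on $A$ as a homotopy BV-algebra structure via the canonical projection $\BVK \twoheadrightarrow \BV$, and apply \cite[Theorem~$33$]{GCTV09} to transfer it along the homotopy retract data to a homotopy BV-algebra $H^{\mathrm{h}}$ on $H$. The same theorem provides an $\infty$-quasi-isomorphism of homotopy BV-algebras $i_\infty : H^{\mathrm{h}} \rightsquigarrow A$ extending the map $i$. By Proposition~\ref{prop:HTT-P*}, this transferred homotopy BV-algebra $H^{\mathrm{h}}$ is precisely $P^*$ applied to the transferred skeletal homotopy BV-algebra on $H$ produced by Theorem~\ref{thm:HTT}; hence, by definition, $\mathrm{Rec}(H) = \Omega_\kappa \B_\iota P^*(H) = \Omega_\kappa \B_\iota H^{\mathrm{h}}$.

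Next, I would invoke the functoriality of the rectification $\Omega_\kappa \B_\iota$ on $\infty$-morphisms of homotopy BV-algebras, in the spirit of \cite[Chapter~$11$]{LodayVallette10}: applying $\Omega_\kappa \B_\iota$ to $i_\infty$ yields a strict morphism of dg BV-algebras $\Omega_\kappa \B_\iota(i_\infty) : \mathrm{Rec}(H) \to \mathrm{Rec}(A)$, which is a quasi-isomorphism because $i_\infty$ is. Composing with the strict counit quasi-isomorphism $\mathrm{Rec}(A) = \Omega_\kappa \B_\iota A \xrightarrow{\sim} A$ of the inhomogeneous bar-cobar adjunction (the $\infty$-inverse of the rectification $A \rightsquigarrow \mathrm{Rec}(A)$ of \cite[Proposition~$32$]{GCTV09}) then produces a direct quasi-isomorphism of dg BV-algebras $\mathrm{Rec}(H) \xrightarrow{\sim} A$, which a fortiori realizes a homotopy equivalence in the Hinich model structure on dg BV-algebras.

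The main technical point is to verify that the rectification functor takes $\infty$-quasi-isomorphisms of homotopy BV-algebras to honest quasi-isomorphisms of dg BV-algebras. This is established by the standard spectral sequence argument associated with the weight filtration on the bar construction $\B_\iota$, analogous to the convergence argument used in the proof of the minimal model theorem above: the induced map on the $E^0$ page is a quasi-isomorphism by the K\"unneth formula applied to the free operad, and the filtrations are exhaustive and bounded below at each fixed arity and total degree. Once this is in place, everything is formal and reduces to chasing Proposition~\ref{prop:HTT-P*} together with the universal properties of the bar-cobar constructions.
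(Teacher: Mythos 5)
Your proposal is correct and follows essentially the same route as the paper: identify the transferred homotopy BV-algebra with $P^*(H)$ via Proposition~\ref{prop:HTT-P*}, invoke the $\infty$-quasi-isomorphism supplied by the homotopy transfer theorem of \cite{GCTV09}, and pass through the rectification $\Omega_\kappa \B_\iota$. The only difference is in the last step: the paper cites \cite[Theorem~$11.4.14$]{LodayVallette10} to produce a zig-zag of strict quasi-isomorphisms of dg BV-algebras, whereas you unfold that theorem's proof to exhibit a single direct quasi-isomorphism $\mathrm{Rec}(H)\to A$ via functoriality of $\Omega_\kappa \B_\iota$ on $\infty$-morphisms and the counit of the bar-cobar adjunction --- a slightly sharper but equivalent conclusion.
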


\begin{proof}
By Proposition~\ref{prop:HTT-P*}, the homotopy BV-algebra structure $P^*(H)$ is equal to the one produced by the homotopy transfer theorem for homotopy BV-algebras \cite[Theorem~$33$]{GCTV09}. Hence, there exists an $\infty$-quasi-isomorphism of homotopy BV-algebras $A \stackrel{\sim}{\rightsquigarrow} P^*(H)$ by Theorem~$10.4.7$ of \cite{LodayVallette10}. The Rectification Theorem for homotopy BV-algebras provides us with an $\infty$-quasi-isomorphism
$P^*(H) \stackrel{\sim}{\rightsquigarrow} \Omega_\kappa \B_\iota P^*(H)$. Finally, the two  dg BV-algebras 
$$A \stackrel{\sim}{\longleftarrow} \bullet \stackrel{\sim}{\longrightarrow}  \bullet\  \cdots\  \bullet 
\stackrel{\sim}{\longleftarrow} \bullet \stackrel{\sim}{\longrightarrow}  \Omega_\kappa \B_\iota P^*(H)=\mathrm{Rec}(H) $$
are linked by a zig-zag of quasi-isomorphism of dg BV-algebras by Theorem~$11.4.14$ of \cite{LodayVallette10}.
\end{proof}

This theorem gives homotopy control of the transferred structure. It plays a key role in the interpretation of the main result in the next section. 

\section{From BV-algebras to homotopy Frobenius manifolds}

We apply the Homotopy Transfer theorem to endow the underlying homology of a dg BV-algebra with Massey products. When the induced action of $\Delta$ is trivial, we recover and extend up to homotopy the Barannikov-Kontsevich-Manin Frobenius manifold structure. Applications of this general result are given
in Poisson geometry and Lie algebra cohomology and to the Mirror Symmetry conjecture.

\subsection{Massey products} 
Working over a field $\KK$, one can always write the underlying homology $(H_\bullet(A, d_A), 0)$ of a dg BV-algebra $A$ as a deformation retract of $(A,d_A)$.  

\begin{defi}
We call \emph{Massey-Batalin-Vilkovisky products} the operations composing the transferred skeletal homotopy BV-algebra structure on the homology $H(A)$ of a dg BV-algebra given by the Homotopy Transfer Theorem~\ref{thm:HTT}.
\end{defi}

Recall that the homology of any dg commutative (associative) algebra carries \emph{partial Massey products}, see \cite{Massey58}. For instance, the partial Massey triple-product $\langle x, y, z\rangle$ is defined for three homology classes $x,y,z \in H(A)$ such that $ x  y = 0 =  y z$ as follows. 
Let  $\bar x,\bar  y,\bar  z \in A$ be 
cycles which
represent $x$, $y$, and $z$ respectively
and let $a, b \in A$ such that $\bar x  \bar y = da , \bar y \bar z = db$. Then the chain $a\bar z - (-1)^{|\bar x|} \bar x b$
 is a cycle. So it defines an element $\langle x, y, z\rangle$ in  $H(A)/( x   H(A) + H(A)  z)$. When the partial Massey products are defined, they are given by the same formulae as the (uniform) Massey products, see \cite[Sections~$9.4$ and $10.3 $]{LodayVallette10}. For dg Lie algebras, partial Massey products were defined by V.S. Retakh in \cite{Retakh93}. The present Massey-Batalin-Vilkovisky products generalize both the partial commutative and Lie Massey products. 
 
 
Theorem~\ref{prop:HoEquivTransf} shows that the data of the Massey products allow one to reconstruct the homotopy type of the initial dg BV-algebra.




\subsection{Trivialization of the action of $\Delta$}\label{subsec:Trivialization}

\begin{prop}\label{prop:NCHdRHyperCom}
Let $A$ be a dg BV-algebra. If there exists a homotopy retract to the homology, which  satisfies 
$p (\Delta h)^{m-1}\Delta i=0$, 
 for $m\ge 1$, then the transferred skeletal homotopy BV-algebra on homology forms a homotopy hypercommutative algebra 
\end{prop}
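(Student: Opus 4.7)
The plan is to reduce the claim to showing that the transferred operators $\widetilde{\Delta}^m$ all vanish on $H(A)$, then to compute them explicitly using the classical Van der Laan transfer formula.

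By Proposition~\ref{prop:redBValg-Hypercom}, a skeletal homotopy BV-algebra whose $\Delta^m$ operations all vanish is precisely a homotopy hypercommutative algebra, so it suffices to prove $\widetilde{\Delta}^m = 0$ on $H(A)$ for every $m \ge 1$. To do this, I would invoke Proposition~\ref{prop:HTT-P*}: the transferred $\widetilde{\Delta}^m$ on the skeletal side agrees, after pulling back along $P$, with the transferred homotopy BV-algebra operation obtained from \cite[Theorem~$33$]{GCTV09} applied to $\delta^m \in T^c(\delta) \subset \qBV^{\ac}$. A direct inspection of the construction of $p_\infty$ from the deformation retract of Theorem~\ref{thm:MainDefRetract} shows that its higher components vanish on the sub-$\Sy$-module $T^c(\delta)$: the contracting homotopy $h = \delta \otimes H$ acts through its $H$-factor on $\Ge^{\ac}$, so applying $h$ to any element of $T^c(\delta) \otimes \I$ gives zero. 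Hence $P(s^{-1}\delta^m) = s^{-1}\delta^m$ and the two transferred operations coincide.

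Next I would unfold the Van der Laan formula for the transferred operation associated to $\delta^m \in \qBV^{\ac}$. Since $\delta$ is a unary generator, the cooperadic decomposition of $\delta^m$ stays inside the unary part $T^c(\delta) \subset \qBV^{\ac}$ and runs over ordered compositions $m = m_1 + \cdots + m_k$, producing a linear (unary) tree of length $k$ with vertices labeled by $\delta^{m_1}, \ldots, \delta^{m_k}$. The Van der Laan formula then assigns to each such tree the composite $p\, \alpha(\delta^{m_1})\, h\, \alpha(\delta^{m_2})\, h \cdots h\, \alpha(\delta^{m_k})\, i$. The crucial fact is that for a strict dg BV-algebra the Koszul twisting morphism $\alpha : \qBV^{\ac} \to \End_A$ factors through the projection onto the weight-one generators $sV$, so $\alpha(\delta^{m_j}) = 0$ whenever $m_j \ge 2$, while $\alpha(\delta) = \Delta$. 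The only summand that survives is the maximal decomposition, with all $m_j = 1$, which gives exactly a scalar multiple of $p(\Delta h)^{m-1}\Delta i$ and therefore vanishes by hypothesis.

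I expect the main obstacle to be the bookkeeping for the middle step: checking that the higher components of $p_\infty$ really act trivially on $T^c(\delta)$, and keeping track of the signs and combinatorial coefficients coming from summing over all iterated decompositions leading to the same maximal linear tree. Both should be straightforward once one writes them out, because only the single composite $p(\Delta h)^{m-1}\Delta i$ appears and vanishes by assumption regardless of its combinatorial multiplicity.
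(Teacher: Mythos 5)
Your proposal is correct and its skeleton is exactly the paper's: reduce, via Proposition~\ref{prop:redBValg-Hypercom}, to showing that the transferred unary operations vanish, and then identify the transferred $\Delta^m$ with $p(\Delta h)^{m-1}\Delta i$, which is zero by hypothesis. The only divergence is in how that formula is obtained. The paper reads it off directly from the skeletal transfer formula of Theorem~\ref{thm:HTT}: for $\delta^m$ the iterated decomposition $\Delta^{\textrm{iter}}$ produces only linear nested trees, the twisting morphism of a strict dg BV-algebra kills every vertex label $\delta^{m_j}$ with $m_j\ge 2$, and $\phi$ on the surviving maximal nesting labels interior edges by $h$, the leaf by $i$ and the root by $p$. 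You instead route through Proposition~\ref{prop:HTT-P*} and the classical Van der Laan formula for the Koszul model, after checking that $P(s^{-1}\delta^m)=s^{-1}\delta^m$ because the higher components of $p_\infty$ involve a terminal $h=\delta\otimes H$ and $H$ annihilates the coidentity, so they vanish on $T^c(\delta)\otimes\mathrm{I}$. Both computations land on the single composite $p(\Delta h)^{m-1}\Delta i$; your detour costs an extra compatibility check but has the virtue of making explicit a verification the paper leaves implicit, and of grounding the formula in the better-documented transfer theorem of \cite{GCTV09} rather than in the newly introduced skeletal one.
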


\begin{proof}
The transferred operations $\Delta^m$ under Theorem~\ref{thm:HTT} are given by 
$\Delta^m:=p (\Delta h)^{m-1}\Delta i$. Then, one concludes with  Proposition~\ref{prop:redBValg-Hypercom}.
\end{proof}

A \emph{mixed chain complex} is a graded vector space $A$ equipped with two anti-commuting square-zero operators $d$ and $\Delta$ of respective degree $-1$ and $1$. 

\begin{defi}
Let $(A, d,\Delta)$ be a mixed chain complex.   {\em Non-commutative Hodge-to-de Rham degeneration data}  consists of a deformation retract 
\begin{eqnarray*}
&\xymatrix{     *{ \quad \ \  \quad (A, d)\ } \ar@(dl,ul)[]^{h}\ \ar@<0.5ex>[r]^{p} & *{\
(H(A),0)\ ,\quad \ \  \ \quad }  \ar@<0.5ex>[l]^{i}}
\end{eqnarray*}
 such that 
$$p (\Delta h)^{m-1}\Delta i=0 \ , $$
 for $m\ge 1$.
\end{defi}

\begin{defi}
The compatibility relation 
$$\Ker  d \cap \Ker \Delta \cap (\im d + \im \Delta)=\im d\Delta = \im \Delta d$$
between the operators $d$ and $\Delta$ of a mixed chain complex is called the \emph{$d\Delta$-condition}. 
\end{defi}

\begin{lemma}\cite[Proposition~$5.17$]{DGMS75}\label{lemma:dDEquiv}
A mixed chain complex $(A_\bullet, d, \Delta)$ satisfies the $d\Delta$-condition if and only if there exist two sub-graded modules $H_\bullet$ and $S_\bullet$ of $A_\bullet$ such that 
$$A_n\cong H_n \oplus S_n \oplus d S_{n+1} \oplus \Delta S_{n-1} \oplus d\Delta S_n \ , $$
where $d_{H_n}= 0$, $\Delta_{H_n}= 0$,  
and where the maps of the following commutative diagram are isomorphisms 
$$\xymatrix@C=30pt{S_n \ar[r]_{\Delta}^\cong \ar[d]^\cong_{d} & \Delta S_n \ar[d]^\cong_{d}\\
d S_n \ar[r]^\cong_{-\Delta} & d\Delta S_n\ .} $$
\end{lemma}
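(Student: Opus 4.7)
The backward implication (decomposition implies $d\Delta$-condition) is routine bookkeeping: from the stated isomorphisms one reads off
$$\Ker d \cap \Ker \Delta \cap A_n = H_n \oplus d\Delta S_n, \qquad (\im d + \im \Delta) \cap A_n = dS_{n+1} \oplus \Delta S_{n-1} \oplus d\Delta S_n,$$
together with $\im(d\Delta) \cap A_n = d\Delta S_n$, so that the required intersection equality is manifest.

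For the converse, the strategy is to choose $H_n$ as any linear complement of $\im(d\Delta) \cap A_n$ inside $Z_n := \Ker d \cap \Ker \Delta \cap A_n$ (the inclusion $\im d\Delta \subset Z_n$ being automatic from $d^2 = \Delta^2 = 0$ and $\Delta d = -d\Delta$), and $S_n$ as any linear complement of $\Ker(d\Delta) \cap A_n$ in $A_n$. With these choices, $d\Delta\colon S_n \to d\Delta S_n$ is an isomorphism, and both $d$ and $\Delta$ are injective on $S_n$: if $ds = 0$ then $d\Delta s = -\Delta ds = 0$, forcing $s \in S_n \cap \Ker d\Delta = 0$.

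Before checking the five-fold decomposition, I would establish two consequences of the $d\Delta$-condition. \emph{(a)}~$\Ker d \cap \im \Delta = \im d\Delta$ (and the symmetric version), because any $\Delta x \in \Ker d$ automatically lies in $\Ker \Delta \cap (\im d + \im \Delta)$, hence in $\im d\Delta$ by hypothesis. \emph{(b)}~$\Ker d\Delta = \Ker d + \Ker \Delta$: from $d\Delta a = 0$, statement (a) applied to $\Delta a$ produces $b$ with $\Delta a = d\Delta b$, whereupon $a + db$ is both $d$-closed and $\Delta$-closed, so $a = (a+db) - db$ is the required decomposition.

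Two tasks then remain. Directness of the sum $H_n + S_n + dS_{n+1} + \Delta S_{n-1} + d\Delta S_n$ follows by successively applying $d\Delta$, then $\Delta$, then $d$ to any hypothetical linear relation; each application annihilates all but one summand, which can then be peeled off using the injectivities above and the complementarity of $H_n$ with $\im d\Delta$ inside $Z_n$. Exhaustion reduces, via $A_n = S_n \oplus (\Ker d\Delta \cap A_n)$ and (b), to showing $\Ker d \cap A_n = H_n \oplus dS_{n+1} \oplus d\Delta S_n$: given $a \in \Ker d$, apply (a) to $\Delta a \in \Ker d \cap \im \Delta$ to produce $b \in S_{n+1}$ with $\Delta a = d\Delta b$; then $a + db \in Z_n = H_n \oplus d\Delta S_n$, yielding the desired form of $a$. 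The commutative diagram of isomorphisms is immediate from the construction. The main obstacle is this exhaustion step: it is the unique place where the $d\Delta$-condition is genuinely used, all the other arguments being formal consequences of $d^2 = \Delta^2 = 0$ and $d\Delta + \Delta d = 0$.
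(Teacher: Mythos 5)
The paper offers no proof of this lemma at all --- it is quoted directly from Deligne--Griffiths--Morgan--Sullivan \cite{DGMS75} --- so there is nothing in the text to compare against; your argument is the standard one from loc.\ cit.\ and is correct in structure and in all essential steps: the choice of $S_n$ as a complement of $\Ker d\Delta$ and of $H_n$ as a complement of $\im d\Delta$ inside $\Ker d\cap\Ker\Delta$, the auxiliary identities (a) and (b), and the directness and exhaustion arguments all go through (over a field, so that complements exist). Two small slips should be repaired. First, in the proof of (b) the element $a+db$ is $\Delta$-closed but not $d$-closed in general, since $d(a+db)=da$ need not vanish; fortunately only $\Delta$-closedness is used, because $a=(a+db)-db$ already exhibits $a\in\Ker\Delta+\Ker d$. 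Second, the exhaustion step reduces, via $A_n=S_n\oplus(\Ker d\Delta\cap A_n)$ and (b), to \emph{two} inclusions, namely $\Ker d\cap A_n\subseteq H_n\oplus dS_{n+1}\oplus d\Delta S_n$ \emph{and} its mirror $\Ker\Delta\cap A_n\subseteq H_n\oplus\Delta S_{n-1}\oplus d\Delta S_n$; you state only the first, but the second follows by the symmetric argument (apply the symmetric version of (a) to $da\in\Ker\Delta\cap\im d$ to write $da=d\Delta c$ with $c\in S_{n-1}$, and correct $a$ by $\Delta c$, landing in $Z_n=H_n\oplus d\Delta S_n$). With these repairs the proof is complete.
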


A dg BV-algebra, which satisfies this condition, is called a \emph{Hodge dg BV-algebra} by A. Losev and S. Shadrin in \cite{LosevShadrin07}. (In this case, the obvious homotopy $h$, which contracts $A$ to its homology $H$, is such that $\lbrack h, \Delta \rbrack = h\Delta + \Delta h=0$.)

\begin{defi}\cite{Park07}
A mixed chain complex is called \emph{semi-classical} if every homology class  has a representative in  the kernel of $\Delta$.
\end{defi}

\begin{prop}
Let $(A, d_A,\Delta)$ be a mixed chain. The following implications hold 
$$(d\Delta\text{-condition}) \Longrightarrow (\text{semi-classical}) \Longrightarrow (\text{NC Hodge-to-de Rham degeneration data}) \ . $$
\end{prop}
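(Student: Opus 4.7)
My plan splits naturally into the two implications, both of which reduce to basic linear algebra.

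\textbf{First implication: $d\Delta$-condition $\Rightarrow$ semi-classical.} I will argue directly from the definition of the $d\Delta$-condition rather than invoking the Hodge-type decomposition of Lemma~\ref{lemma:dDEquiv}. Given a $d$-cycle $\alpha \in A$, the element $\Delta\alpha$ lies simultaneously in $\im \Delta$ (trivially), in $\ker \Delta$ (since $\Delta^2 = 0$), and in $\ker d$ (since $d \Delta \alpha = -\Delta d\alpha = 0$). Hence $\Delta\alpha \in \ker d \cap \ker \Delta \cap (\im d + \im \Delta)$, and the $d\Delta$-condition forces $\Delta\alpha \in \im \Delta d$. Writing $\Delta\alpha = \Delta d\gamma$ for some $\gamma\in A$, the corrected representative $\alpha - d\gamma$ is $d$-cohomologous to $\alpha$ and lies in $\ker \Delta$, yielding the semi-classical representative.

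\textbf{Second implication: semi-classical $\Rightarrow$ NC Hodge-to-de Rham degeneration data.} My strategy is to build a deformation retract whose section $i$ already takes values in $\ker \Delta$; then the required vanishing $p(\Delta h)^{m-1}\Delta i = 0$ for all $m \geq 1$ will be automatic, since every such composite contains the factor $\Delta i$. By semi-classicality I can choose, basis class by basis class, a representative inside $\ker d \cap \ker \Delta$, defining a chain map $i : (H(A), 0) \to (A, d)$ with $\Delta \circ i = 0$ and $pi=\id$. I will then extend this section to a deformation retract by a standard splitting argument: pick a vector-space complement $B$ of $i(H(A))$ inside $\ker d$ (necessarily $B = \im d$ up to isomorphism) and a complement $C$ of $\ker d$ inside $A$, so that $d|_C : C \xrightarrow{\sim} \im d = B$. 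Define $p$ to kill $B \oplus C$ and to invert $i$ on $i(H(A))$, and define $h$ to vanish on $i(H(A)) \oplus C$ and to equal $(d|_C)^{-1}$ on $B$. A routine summand-by-summand check verifies $ip - \id_A = dh + hd$.

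\textbf{Main obstacle.} There is essentially no genuine difficulty: each implication collapses to elementary linear algebra once the right element is identified. The only point requiring mild care is the simultaneous choice of complements in the second step, but this is painless over a field. The interest of the proposition is conceptual rather than technical: it locates the NC Hodge-to-de Rham degeneration data---precisely the hypothesis needed in Proposition~\ref{prop:NCHdRHyperCom} to produce a homotopy hypercommutative structure on homology---strictly between the classical $d\Delta$-lemma of K\"ahler geometry and Park's semi-classicality condition.
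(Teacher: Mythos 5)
Your argument is correct. The second implication is exactly the paper's: the paper's entire proof of that step is to choose a deformation retract whose section $i$ lands in $\Ker\Delta$, so that $\Delta i=0$ kills every composite $p(\Delta h)^{m-1}\Delta i$; you merely spell out the standard splitting construction of such a retract, which the paper leaves implicit. The first implication is where you genuinely diverge: the paper simply cites Lemma~\ref{lemma:dDEquiv}, reading off representatives from the summand $H_\bullet$ of the Hodge-type decomposition $A_n\cong H_n\oplus S_n\oplus dS_{n+1}\oplus \Delta S_{n-1}\oplus d\Delta S_n$, on which both $d$ and $\Delta$ vanish. Your route instead works directly with the defining identity $\Ker d\cap\Ker\Delta\cap(\im d+\im\Delta)=\im \Delta d$: for a $d$-cycle $\alpha$ the element $\Delta\alpha$ lies in this intersection, so $\Delta\alpha=\Delta d\gamma$ and $\alpha-d\gamma$ is a $\Delta$-closed representative. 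This is more elementary and self-contained, since it does not require the full strength of the decomposition lemma (whose proof is nontrivial and is imported from \cite{DGMS75}), at the cost of redoing by hand what that lemma packages; the paper's choice is economical because Lemma~\ref{lemma:dDEquiv} is already stated and used nearby. One cosmetic point: you write $ip-\id_A=dh+hd$, whereas the paper's convention for a homotopy retract is $\id_A-ip=d_Ah+hd_A$; the retract you construct actually satisfies the latter (or the former after replacing $h$ by $-h$), and this sign has no bearing on the vanishing $p(\Delta h)^{m-1}\Delta i=0$.
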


\begin{proof}
The first assertion is given by  Lemma~\ref{lemma:dDEquiv}. To prove the second one, it is enough to write the homology $H(A)$ as a deformation retract of $A$, with representatives in $\Ker \Delta$. In this case, $\Delta i=0$, which concludes the proof. 
\end{proof}

The existence of NC Hodge-to-de Rham degeneration
data is therefore the most general condition that naturally supports
this notion of the trivialization of the action of $\Delta$ on the
homology of a dg BV-algebra

\begin{examples}$ \ $
\begin{itemize}

\item[$\diamond$] Let $\mathcal M$ be a compact K\"ahler manifold, with complex structure denoted by $J$. The space of differential forms $(\Omega^\bullet(\mathcal M),  d_{DR}, \Delta:=J d_{DR} J)$ forms a dg BV-algebra which satisfies the $d\Delta$-condition, see P. Deligne, P. Griffiths, J. Morgan and D. Sullivan \cite{DGMS75}. (Notice that here the operator $\Delta$ has order less than $1$). 

\item[$\diamond$]
Let $\mathcal M$ be a Calabi-Yau manifold. The Dolbeault complex of anti-holomorphic differential forms with coefficients into 
holomorphic polyvector fields 
$(\Gamma(M, \wedge^\bullet \bar{T}_{\mathcal M}^* \otimes \wedge^\bullet {T}_{\mathcal M}), d:=\bar \partial,  \wedge, \Delta:=\textrm{div}, \langle \, , \rangle_S)$ is a dg BV-algebra satisfying the $d\Delta$-condition, see S. Barannikov and M. Kontsevich \cite{BarannikovKontsevich98}.  This is an extension, from vector fields to polyvector fields, of the Kodaira-Spencer dg Lie algebra \cite{KodairaSpencer58, KodairaSpencer60}, which encodes the complex structures of a manifold. 

\item[$\diamond$] Let $(\mathcal M, w)$ be  a Poisson manifold. The space of differential forms $(\Omega^\bullet(\mathcal M), d_{DR},  \wedge, \Delta:=[i_w, d_{DR}])$ form a dg  BV-algebra,  see \cite{Koszul85, Brylinski88}. 
When $(\mathcal M, \omega)$ is a compact symplectic manifold of dimension $n$, O. Mathieu proved in \cite{Mathieu95} that 
$\mathcal M$ satisfies  the hard Lefschetz condition, i.e. the cup product $[\omega^k] : H^{n-k}(M) \to H^{n+k}(M)$ is an isomorphism, for $k\leq n/2$, 
 if and only if this dg BV-algebra is semi-classical. S. Merkulov further proved that this is equivalent to the $d\Delta$-condition in \cite{Merkulov98}. This is the case when $\mathcal M$ is a K\"ahler manifold, see \cite{Brylinski88}.

\item[$\diamond$] Let $V$ be finite dimensional vector space with basis $\lbrace v_i\rbrace_{1\leq i\leq n}$. We consider the free commutative algebra $A:=S(V\oplus s^{-1}V^*)$ of functions on the cotangent bundle of $V^*$, equipped with  the order $2$ and degree $1$ operator 
$\Delta:=\sum_{i=1}^n \frac{\partial}{\partial v_i}\frac{\partial}{\partial v_i^*}$. These data define the prototypical example of BV-algebras, see \cite{BatalinVilkovisky81}. Any element $w$ of degree $-2$ such that $\Delta(w)=\langle w, w\rangle=0$ gives rise to a dg BV-algebra $(A, d_w:=\langle w, - \rangle, \bullet, \Delta, \langle\,  , \rangle )$.
One can find dg BV-algebras of this type equipped with  NC Hodge to de Rham degeneration data but which does not satisfy the $d\Delta$-condition, see \cite[Example~$9$]{Park07} and \cite[Section~$3.2$]{Terilla08}. 
\end{itemize}
\end{examples}

\subsection{Homotopy Frobenius manifold}

\begin{thm}\label{thm:HomotopyFrob}
Let $(A, d,  \bullet, \Delta, \langle\,  , \rangle)$ be a dg BV-algebra with non-commutative Hodge-to-de Rham degeneration data. 

The underlying homology groups $H(A,d)$ carry a homotopy hypercommutative algebra structure, which extends the hypercommutative algebras of M. Kontsevich and S. Barannikov \cite{BarannikovKontsevich98}, Y.I. Manin \cite{Manin99},  A. Losev and S. Shadrin \cite{LosevShadrin07}, and J.-S. Park \cite{Park07}, and such that 
the rectified  dg BV-algebra $\mathrm{Rec}(H(A))$
is homotopy equivalent to $A$ in the category of dg BV-algebras. 
\end{thm}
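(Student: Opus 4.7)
The plan is to assemble the theorem directly from the three key machines of the paper: the Homotopy Transfer Theorem~\ref{thm:HTT}, Proposition~\ref{prop:NCHdRHyperCom}, and the Rectification Theorem~\ref{prop:HoEquivTransf}. The NC Hodge-to-de Rham degeneration data is, essentially by definition, exactly what is needed to feed into Proposition~\ref{prop:NCHdRHyperCom}.

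First, I would note that the NC Hodge-to-de Rham data provides a deformation retract
\[
\xymatrix{     *{ \quad \ \  \quad (A, d)\ } \ar@(dl,ul)[]^{h}\ \ar@<0.5ex>[r]^{p} & *{\
(H(A),0)\quad \ \  \ \quad }  \ar@<0.5ex>[l]^{i}}
\]
satisfying $p(\Delta h)^{m-1} \Delta i = 0$ for all $m \geq 1$. Viewing $(A, d, \bullet, \Delta, \langle\,,\,\rangle)$ as a skeletal homotopy BV-algebra (with all higher operations vanishing, since it is strictly a dg BV-algebra), I apply the Homotopy Transfer Theorem~\ref{thm:HTT} to obtain a transferred skeletal homotopy BV-algebra structure on $H(A)$. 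By the explicit formula in that theorem combined with Proposition~\ref{prop:NCHdRHyperCom}, the unary transferred operations $\Delta^m = p(\Delta h)^{m-1}\Delta i$ on $H(A)$ vanish identically, so by Proposition~\ref{prop:redBValg-Hypercom} the transferred structure is in fact a homotopy hypercommutative algebra.

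Next, the homotopy equivalence statement follows immediately from Theorem~\ref{prop:HoEquivTransf}: the rectified dg BV-algebra $\mathrm{Rec}(H(A)) = \Omega_\kappa \B_\iota P^*(H(A))$ is connected to $A$ by a zig-zag of quasi-isomorphisms of dg BV-algebras.

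The remaining, and most delicate, point is the claim that the transferred structure \emph{extends} the Barannikov-Kontsevich-Manin-Losev-Shadrin-Park hypercommutative structure. The hard part here is identifying the low-arity components of my transferred structure with the classical formulae. My approach would be to use Lemma~\ref{lem:WeightGrav}: the component of ${\mathcal S}^{-1}\overline{Grav}^*$ of weight one is exactly $H\, sLie_1^{\ac}$, i.e.\ corresponds to totally symmetric $n$-ary operations, and on this stratum the transfer formula of Theorem~\ref{thm:HTT} reduces to a sum over trees whose internal edges are decorated by $h$, vertices alternately by $\bullet$ and by $h$ applied to a bracket; because $\langle\,,\,\rangle = \Delta(\bullet) - (\Delta \otimes \id + \id \otimes \Delta)(\bullet)$ in a BV-algebra and $p \Delta i = 0$, one rewrites each bracket using $\Delta$ and repeatedly applies the condition $p(\Delta h)^{m-1}\Delta i = 0$ to collapse the formula. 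Carrying out this reduction (in the stronger $d\Delta$-case first, then in the semi-classical and NC-Hodge-to-de Rham cases) reproduces exactly the tree sum of Losev-Shadrin \cite{LosevShadrin07}, which specializes to the Barannikov-Kontsevich and Manin formulae on the binary stratum and to Park's formulae in the semi-classical setting. Higher strata of ${\mathcal S}^{-1}\overline{Grav}^*$ (weight $\geq 2$) then provide the genuinely new higher operations that are invisible to the classical constructions but needed to record the full homotopy type via $\mathrm{Rec}(H(A)) \simeq A$.
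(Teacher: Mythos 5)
Your proposal is correct and follows essentially the same route as the paper's proof: Theorem~\ref{thm:HTT} plus Proposition~\ref{prop:NCHdRHyperCom} (via Proposition~\ref{prop:redBValg-Hypercom}) for the homotopy hypercommutative structure, Theorem~\ref{prop:HoEquivTransf} for the homotopy equivalence with $\mathrm{Rec}(H(A))$, and Lemma~\ref{lem:WeightGrav} together with the reduction of the weight-one stratum to the Losev--Shadrin tree formulae for the ``extends'' claim. The paper merely fleshes out that last step with an explicit account of the Barannikov--Kontsevich construction (smooth formality, the Maurer--Cartan element in $\Hom(\bar S^c(H),\Ker\Delta)$, and the twisted dg BV-algebra over $\widehat S(H^*)$), which your outline compresses but does not contradict.
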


\begin{proof}
The transferred skeletal homotopy BV-algebra structure on homology given by  Theorem~\ref{thm:HTT} forms a homotopy hypercommutative algebra by Proposition~\ref{prop:NCHdRHyperCom}. 

We make explicit the various constructions of \cite{BarannikovKontsevich98} as follows. When a dg BV-algebra satisfies the $d\Delta$-condition, there is a zig-zag of quasi-isomorphisms of dg Lie algebras  (smooth formality)
$$\xymatrix{(A,d, \langle\,  , \rangle)   &  \ar[l]_(0.53){\sim} (\Ker \Delta, d, \langle\,  , \rangle) 
\ar[r]^(0.37){\sim} &  (H^\bullet(A, \Delta)\cong(H_\bullet(A, d)), 0, 0) }\ . $$
By \cite[Theorem~$10.4.7$]{LodayVallette10}, there exists an $\infty$-quasi-isomorphism of dg Lie algebras $H \stackrel{\sim}{\rightsquigarrow} \Ker \Delta$,  explicitly given by sums of binary trees with vertices labelled by $\bullet$ and with edges and root labelled by $h\Delta$. Normalizing each sum of trees of arity $n$ by a factor $\frac{1}{n!}$, this provides a solution $\gamma$ to the Maurer-Cartan equation in the dg Lie algebra $\Hom(\bar  S^c(H), \Ker \Delta)$, where $\bar S^c$ stands for the non-counital cofree cocommutative coalgebra. The twisted data $(\Hom(\bar  S^c(H), A), d_\gamma:=d + \langle \gamma, - \rangle, \bullet, \Delta, \langle\,  , \rangle)$ form a dg BV-algebra over the ring of formal power series $\widehat S(H^*)$ without constant term. Its homology with respect to $d_\gamma$ is equal to  $\Hom(\bar  S^c(H), H)\cong \widehat S(H^*) \otimes H$. The transferred commutative product on homology $\widehat S(H^*) \otimes H$ provides us with the desired hypercommutative algebra structure on $H$, see \cite[Chapters~$0$ and $3$]{Manin99} for the various equivalent definitions of a formal Frobenius manifold. Tracing through the aforementioned constructions, one can see that the associated potential is given by the same kind of sums of labelled trees but with a normalizing coefficient given by the number of automorphisms of the trees. We recover the explicit formula of \cite{LosevShadrin07}. Manin \cite{Manin99} and Park \cite{Park07} use obstruction theory, for which choices can be made to produce the above structure. 

The first stratum of operations composing the transferred homotopy hypercommutative algebra is equal to the tree formulae of Losev-Shadrin as follows. Lemma~\ref{lem:WeightGrav} shows that 
the weight $1$ part of $Grav^*$ is isomorphic  to $H Lie_1^{\ac}$. For any $n\ge 2$, the space $Lie_1^{\ac}(n)$ is one dimension and generated by the element, which in $\cofree(\beta)$ is the sum of all binary tree with vertices labeled by $\beta$. The image of such trees under the formula of Theorem~\ref{thm:HTT} is made up of  binary trees with each vertex labelled by $\bullet$, one  leaf labelled by $\Delta$, and with edges labelled by $h$. (One can see that the image of a maximal nesting under the map $\Phi$ is given by labeling all interior edges by $h$.)  
Under the $d\Delta$-condition, the relations $p\Delta=\Delta i=h\Delta+\Delta h=\Delta^2=0$ make many trees cancel  and this produces the aforementioned Losev-Shadrin formulae. 

The last assertion is a direct corollary of Theorem~\ref{prop:HoEquivTransf}.
\end{proof}

\begin{remarks}$ \ $

\begin{itemize}
\item[$\diamond$] 
First, this theorem conceptually explains the result of Barannikov-Kontsevich, Manin, Losev-Shadrin, and Park in terms of the homotopy transfer theorem, thereby answering a question asked by the referee of \cite[Section~$5$]{Park07}. 

\item[$\diamond$]
Since there is no differential on homology, the first stratum of operations of this  homotopy hypercommutative algebra 
satisfies the relations of an hypercommutative algebra. So Theorem~\ref{thm:HomotopyFrob} 
proves the existence of such a structure under a weaker
condition (NC Hodge-to-de Rham degeneration data) than in \cite{BarannikovKontsevich98, Manin99, Park07} ($d\Delta$-condition, semiclassical). 

\item[$\diamond$]
Unlike the framework of Frobenius manifolds, we do not work here with cyclic unital BV-algebras. First, a cyclic BV-algebra is equipped with a non-degenerate bilinear form which forces its dimension to be finite. The present method works in the infinite dimensional case. Then, the operad which encodes BV-algebras with unit is not augmented, so it does not admit a minimal model. To make a cofibrant replacement explicit, one would need to use the more general Koszul duality theory developed by J. Hirsh and J. Mill\`es in \cite{HirschMilles10}.

\item[$\diamond$] 
Finally, Theorem~\ref{thm:HomotopyFrob} provides higher structure on homology, which  is shown to be \emph{necessary} to recover the homotopy type of the original dg BV-algebra and not to lose any homotopy data when passing to homology, see also Example~\ref{subsec:EX} below. 
\end{itemize}
\end{remarks}

In geometrical terms, we have lifted the action of the Deligne-Mumford-Knudsen moduli space of genus $0$ curves to an action of the open moduli space of genus $0$ curves as follows. 
$$\xymatrix@C=30pt{H^{\bullet+1}({\mathcal{M}}_{0, n+1})   \ar@{..>}[r]^(0.58)\alpha  \ar[d]^\kappa&   \End_{H(A)} \\
H_\bullet (\overline{\mathcal{M}}_{0, n+1})  \ar[ur]_{\ \quad f} \ .&  } $$
The map $f$ is the morphism of operads given by \cite{BarannikovKontsevich98, Manin99, LosevShadrin07, Park07}. The map $\kappa$ is the twisting Koszul morphism  from the cooperad $H^{\bullet+1}({\mathcal{M}}_{0, n+1})$ given in \cite{Getzler95}. It sends the cohomological class corresponding to $H_0({\mathcal{M}}_{0, n+1})$ to the fundamental class of $\overline{\mathcal{M}}_{0, n+1}$. 
The construction given in Theorem~\ref{thm:HomotopyFrob} corresponds to the map $\alpha$, which is 
a  twisting morphism from the cooperad $H^{\bullet+1}({\mathcal{M}}_{0, n+1}) $. 
The map $\kappa$ vanishes outside the top dimensional classes and the \emph{restriction} of the map $\alpha$ to these top dimensional  classes is equal to the composite $f \circ \kappa$. 
Such a morphism of operads $f$ defines the genus zero part of what Kontsevich-Manin call a  Cohomological Field Theory in \cite{KontsevichManin94}. 

\begin{defi}
An \emph{genus $0$ extended cohomological field theory} is a graded vector space $H$ equipped with an operadic twisting morphism 
$H^{\bullet+1}({\mathcal{M}}_{0, n+1}) \to \End_{H}$. 
\end{defi}

\subsection{An example}\label{subsec:EX}
Let us consider the following non-unital dg commutative algebra $A$ generated by the $5$ generators
$$x_3, \ y_3, \ z_7, \ u_7, \ \text{and} \ v_8Ê\ ,  $$
where the subscript indicates the homological degree, satisfying the  relations 
$$A:=\bar S(x,y,z,u,v)/(xu, yu, zu, xv, yv, zv, uv, v^2)\ . $$
(The product by $u$ and by $v$ is equal to zero.) The differential map is defined on the generators by 
$$d z := xy, \quad d v :=uÊ\ , $$
and by $0$ otherwise. 

The algebra $A$ is finite dimensional and spanned by the $9$ elements: 
$x, \ y, \ xy, \ z, \ u, \ v, \ xz, \ yz, \ xyz$.
Its underlying homology $H_\bullet(A,d)$ is five dimensional and spanned by the classes of: 
$x, \ y,  \  xz, \ yz, \ xyz$.

We define the degree $+1$ operator $\Delta$ on the aforementioned elements by 
$$\Delta(xy):=u, \quad \Delta(z):=-v \ ,$$
and by $0$ otherwise. 

\begin{prop}
The dg commutative algebra $(A,  d, \Delta)$ is a dg BV-algebra, which satisfies the $d\Delta$-condition. 
\end{prop}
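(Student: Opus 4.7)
The plan is to verify that $(A, d, \bullet, \Delta)$ is a dg BV-algebra by direct calculation on the nine-element basis $\{x, y, z, u, v, xy, xz, yz, xyz\}$, and then to exhibit the explicit five-term splitting required by Lemma~\ref{lemma:dDEquiv} to establish the $d\Delta$-condition.

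I first check the dg commutative algebra structure. Since the generators except $v$ are odd, graded commutativity gives $x^2 = y^2 = z^2 = u^2 = 0$, and the nine listed elements form a basis of the quotient. The derivation $d$ descends to $A$ because $d$ applied to each defining relation lands in the ideal (for instance $d(xv) = -xu$ and $d(zv) = xyv - zu$), and $d^2 = 0$ is trivial on generators. The only non-vanishing values on the basis are $dz = xy$ and $dv = u$, since e.g.\ $d(xz) = -x \cdot xy = 0$ by $x^2 = 0$.

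For the BV operator, the cleanest route to the order-$\leq 2$ property is to lift $\Delta$ to the free graded-commutative algebra $A' := \bar{S}(x,y,z,u,v)$ as the degree $+1$ differential operator
\[
\Delta' \;:=\; -u\,\partial_x\partial_y \;-\; v\,\partial_z,
\]
a sum of a pure order-$2$ term and a derivation. One verifies directly that $\Delta'(xy) = u$, $\Delta'(z) = -v$, that $\Delta'$ vanishes on all generators, and that $\Delta'$ preserves the ideal of relations (since $\partial_x, \partial_y, \partial_z$ all kill $u$ and $v$); hence $\Delta'$ descends to the given $\Delta$ on $A$. Because $\Delta'$ is a differential operator of order $\leq 2$ and this property is preserved under quotients, $\Delta$ is also order $\leq 2$, so the induced bracket $\langle a, b \rangle := \Delta(ab) - \Delta(a)b - (-1)^{|a|} a \Delta(b)$ is a biderivation, yielding the Leibniz relation. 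The equations $\Delta^2 = 0$ and $d\Delta + \Delta d = 0$ are then verified element by element on the basis; the only non-trivial case is $z$, where $d\Delta(z) + \Delta d(z) = -u + u = 0$. The Jacobi identity for the bracket and the derivation property of $\Delta$ on the bracket are formal consequences of $\Delta^2 = 0$ combined with order-$\leq 2$.

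For the $d\Delta$-condition I apply Lemma~\ref{lemma:dDEquiv} with the splitting
\[
H := \langle x,\, y,\, xz,\, yz,\, xyz\rangle, \qquad S := \langle z \rangle,
\]
so that $dS = \langle xy\rangle$, $\Delta S = \langle v\rangle$, and $d\Delta S = \langle u\rangle$. Both $d$ and $\Delta$ vanish on $H$ by the computations above, and the square
\[
\xymatrix@C=35pt{
\langle z\rangle \ar[r]^{\Delta} \ar[d]_{d} & \langle v\rangle \ar[d]^{d} \\
\langle xy\rangle \ar[r]_{-\Delta} & \langle u\rangle
}
\]
commutes (both composites send $z$ to $-u$) and consists of isomorphisms of one-dimensional spaces. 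The dimension count $5 + 1 + 1 + 1 + 1 = 9 = \dim A$ confirms the required direct sum decomposition. The main subtlety is the verification that $\Delta$ is genuinely of order $\leq 2$, which is finessed by the lift to the free graded-commutative algebra where this property is manifest; the remainder is finite bookkeeping.
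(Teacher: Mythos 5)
Your proof is correct and follows the same route as the paper: verify that $\Delta$ is a square-zero, order-$\le 2$ operator anticommuting with $d$, and then exhibit exactly the splitting $H=\KK x\oplus\KK y\oplus\KK xz\oplus\KK yz\oplus\KK xyz$, $S=\KK z$ required by the lemma on the $d\Delta$-condition. The only addition is your explicit lift $\Delta'=-u\,\partial_x\partial_y-v\,\partial_z$ to the free graded-commutative algebra, which is a clean way of making precise the order-$\le 2$ claim that the paper simply asserts as straightforward.
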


\begin{proof}
It is straightforward to see that $\Delta$ commutes with $d$, that it has order less than $2$ (but not less than $1$) and that it squares to $0$. 

A decomposition such as the one of Lemma~\ref{lemma:dDEquiv} is given by 
$$H_\bullet := \KK x \oplus \KK  y \oplus \KK  xz \oplus \KK yz \oplus \KK xyz
 \ \text{and} \  S_\bullet=    \KK z   \ . $$
Therefore, this dg BV-algebra satisfies the $d\Delta$-condition. 
\end{proof}

The first Massey product in the second stratum of the transferred homotopy hypercommutative algebra structure  is the first homotopy in the associated $C_\infty$-algebra structure, since 
$S^{-1}{Grav^*}^{(2)}(3)\cong {Com^{\ac}}(3)$. In the present example, this product is not trivial since it is equal to $-yz$ on the elements $x, y, y$. 
So this provides an example of a dg BV-algebra, which satisfies the $d\Delta$-condition, the strongest condition, and for which the Barannikov-Kontsevich-Manin structure of a Frobenius manifold on homology is not enough to recover the original homotopy type of the dg BV-algebra. 

\subsection{Application to Poisson geometry and Lie algebra cohomology}
Let $\mathcal M$ be an $n$-dimensional manifold. We consider the Gerstenhaber algebra 
of polyvector fields $A:=\Gamma(\mathcal M, \Lambda^\bullet T_{\mathcal M})$
 on $\mathcal M$, equipped with the Schouten-Nijenhuis bracket $\langle\,  ,  \rangle_{SN}$. 
 Recall from J.-L. Koszul \cite[Proposition~$(2.3)$]{Koszul85} that any torsion-free connection $\nabla $ on $T_{\mathcal M}$ which induces a flat connection on $\Lambda^n  T_{\mathcal M}$  gives rise to a square-zero order $2$ operator $D_\nabla$ making $(A, \wedge, D_\nabla, \langle\,  ,  \rangle_{SN})$ into a BV-algebra. For instance, this is the case when $\mathcal M$ is orientable with volume form $\Omega$ or when $\mathcal M$ is a Riemannian manifold with the Levi-Civita connection. 

Moreover, if $\mathcal M$ carries a Poisson structure, i.e. $w\in \Gamma(\mathcal M, \Lambda^2 T_{\mathcal M})$ satisfying $\langle w  , w \rangle_{SN}=0$, such that the infinitesimal automorphism $D_\nabla(w)=0$ vanishes, then the twisted differential $d_w:=  \langle w  , - \rangle_{SN}$ induces a dg BV-algebra 
$$(\Gamma(\mathcal M, \Lambda^\bullet T_{\mathcal M}), d_w, 
\wedge, D_\nabla, \langle\,  ,  \rangle_{SN}  ) \ . $$
For instance, this is the case when $\mathcal M$ is orientable with unimodular Poisson stucture, i.e. $D_\Omega(w)=0$.
The homology groups associated to the differential $d_w$ form the \emph{Poisson cohomology} of the manifold $\mathcal M$, see \cite{Lichnerowicz77}.
(For similar constructions in non-commutative geometry, we refer the reader to \cite{GinzburgSchedler10}).

\begin{prop}\cite{Koszul85}\label{prop:ISOdRPV}
When  $\mathcal M$ is a symplectic manifold, the contraction with the symplectic form $\omega$ induces  an isomorphism of dg BV-algebras 
$$(\Omega^{\bullet}(\mathcal M), d_{DR},  \wedge, \Delta, \langle\,  ,  \rangle) \cong  
(\Gamma(\mathcal M, \Lambda^\bullet T_{\mathcal M}), d_w, 
\wedge, D, \langle\,  ,  \rangle_{SN}  )  \ , $$
where $D:=[i_\omega, d_w]$. 
\end{prop}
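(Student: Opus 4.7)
The plan is to build the isomorphism out of the musical equivalence induced by the non-degenerate form $\omega$ and then verify compatibility structure by structure. Since $\omega$ is non-degenerate, the fibrewise map $\flat_\omega : T_{\mathcal M} \to T^*_{\mathcal M}$ defined by $X \mapsto i_X \omega$ is an isomorphism. Extending it multiplicatively gives an isomorphism of graded commutative algebras
\[
\Phi : \Gamma(\mathcal M, \Lambda^\bullet T_{\mathcal M}) \xrightarrow{\cong} \Omega^\bullet(\mathcal M),
\qquad \Phi(X_1 \wedge \cdots \wedge X_k) = (i_{X_1}\omega) \wedge \cdots \wedge (i_{X_k}\omega),
\]
so compatibility with the commutative products is immediate. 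In arity one, this is just the musical isomorphism, which takes the Poisson bivector $w$ to $\omega$ by construction (this is the very definition of the symplectic Poisson structure).

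Next I would verify that $\Phi$ intertwines the differentials, that is, $d_{DR} \circ \Phi = \Phi \circ d_w$. Since $d_w = \langle w, -\rangle_{SN}$ is a derivation of $\wedge$ and $\Phi$ is an algebra map, it suffices to check this on generators $f \in C^\infty(\mathcal M)$ and $X \in \Gamma(T_{\mathcal M})$. For a function, $d_w f$ is the Hamiltonian vector field $X_f$, and $\Phi(X_f) = i_{X_f}\omega = df$ is exactly the defining equation of the symplectic Poisson bracket; for a vector field, the identity reduces to a Cartan-calculus manipulation using $d\omega = 0$. This is essentially the classical Brylinski/Koszul identification of Poisson and de Rham cohomology on a symplectic manifold.

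Once the two dg commutative algebra structures are identified, I would intertwine the BV-operators using the definitions $D = [i_\omega, d_w]$ and $\Delta = [i_w, d_{DR}]$. Because $\Phi$ sends $w$ to $\omega$ and identifies contraction on one side with contraction on the other (up to signs to be tracked), and because it already intertwines $d_w$ with $d_{DR}$, the identity $\Delta \circ \Phi = \Phi \circ D$ follows. Compatibility with the brackets $\langle\, ,\rangle_{SN}$ and $\langle\, ,\rangle$ is then automatic: in any BV-algebra, the bracket is uniquely determined as the obstruction to $\Delta$ being a derivation of the product, so any isomorphism of dg commutative algebras intertwining the BV-operators automatically intertwines the brackets.

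The main obstacle I anticipate is bookkeeping: producing clean sign-correct formulae for the Schouten--Nijenhuis calculus on polyvectors of arbitrary degree and for the behaviour of $i_w$ and $i_\omega$ under $\Phi$. Once these are in place the compatibility of $d_w$ with $d_{DR}$ follows from $d\omega=0$ essentially by induction on polyvector degree, but setting up the induction step so that the Cartan-type identities apply cleanly is the technical heart of the argument.
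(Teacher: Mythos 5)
The paper offers no proof of this proposition: it is imported verbatim from Koszul \cite{Koszul85} (the surrounding text only records the statement), so there is no internal argument to compare yours against. Your outline is the standard proof and is essentially sound. Two points deserve more than the label ``bookkeeping''. First, the reduction to generators is legitimate because both $d_w=\langle w,-\rangle_{SN}$ and $d_{DR}$ are derivations of the respective wedge products and $\Phi$ is an algebra morphism; but the generator computation for a vector field $X$ is not purely Cartan calculus. You need $\Phi(L_X w)=L_X\omega$, and since $\Phi$ is itself built from $\omega$ it is not $L_X$-equivariant for free --- this identity is precisely where the inverse relation between $w$ and $\omega$ enters, after which $L_X\omega=d\,i_X\omega$ uses $d\omega=0$. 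Second, with the usual conventions one obtains $\Phi\circ d_w=-\,d_{DR}\circ\Phi$ rather than a strict chain map; this is repaired by inserting a degree-dependent sign such as $(-1)^k$ into $\Phi$ on $\Lambda^k T_{\mathcal M}$, and that same twist must then be checked against the pointwise linear-algebra identity $\Phi\circ i_\omega=i_w\circ\Phi$ so that $\Delta\circ\Phi=\Phi\circ D$ survives. Your observation that the brackets are intertwined automatically, because in each BV-algebra the bracket is the obstruction to the unary operator being a derivation of the product, is correct and is the right way to avoid a direct comparison of the Schouten--Nijenhuis bracket with the Koszul bracket of forms.
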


Recall that the homology groups associated to the differential $\Delta$ on the left-hand side form the \emph{Poisson homology} of the manifold $\mathcal M$. The Poisson homology and cohomology are proved to be isomorphic under the weaker condition that the Poisson manifold is orientable and  unimodular, see P. Xu in \cite{Xu99}. 

\begin{thm}\label{thm:HomoStructure}
The de Rham cohomology of a Poisson manifold $\mathcal M$ carries a skeletal homotopy BV-algebra, whose rectified dg BV-algebra is homotopy equivalent to the dg BV-algebra $(\Omega^\bullet(\mathcal M), d_{DR}, \wedge,  \Delta)$.
The Poisson cohomology of an orientable Poisson manifold $\mathcal M$ carries a skeletal homotopy BV-algebra, whose rectified dg BV-algebra is homotopy equivalent to the dg BV-algebra $(\Gamma(\mathcal M, \Lambda^\bullet T_{\mathcal M}), d_w, 
\wedge, \Delta, \langle\,  ,  \rangle_{SN}  )$. 

The de Rham cohomology and the Poisson cohomology of a symplectic manifold are isomorphic skeletal homotopy BV-algebras. 
When the manifold $\mathcal M$ is compact and satisfies the hard Lefsechtz condition, this  isomorphism 
reduces to an isomorphism of 
homotopy hypercommutative algebras. 
\end{thm}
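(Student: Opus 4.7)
The plan is to obtain each of the four assertions by specializing the general machinery developed in the preceding sections to the two geometric dg BV-algebras at hand, together with Koszul's classical comparison result (Proposition~\ref{prop:ISOdRPV}) and Mathieu's characterization of the hard Lefschetz condition.

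First I would address parts~(1) and (2) simultaneously. In each case we have a genuine dg BV-algebra: for (1) it is $(\Omega^\bullet(\mathcal M), d_{DR}, \wedge, \Delta, \langle\,,\,\rangle)$ with $\Delta = [i_w, d_{DR}]$, and for (2) it is $(\Gamma(\mathcal M, \Lambda^\bullet T_{\mathcal M}), d_w, \wedge, D_\nabla, \langle\,,\,\rangle_{SN})$, where one uses either a volume form or the Levi-Civita connection to produce the order-two generator. Working over a field of characteristic zero, the underlying cohomology can always be realized as a deformation retract of the underlying chain complex. Applying the Homotopy Transfer Theorem~\ref{thm:HTT} transfers the skeletal homotopy BV-algebra structure (here the strict dg BV-algebra structure viewed as a skeletal homotopy BV-algebra with all higher operations vanishing) onto the cohomology. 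The Rectification Theorem~\ref{prop:HoEquivTransf} then provides the zig-zag of quasi-isomorphisms of dg BV-algebras between $\mathrm{Rec}(H)$ and the original dg BV-algebra, proving parts~(1) and~(2).

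Next I would prove the symplectic comparison statement of part~(3). Proposition~\ref{prop:ISOdRPV} gives an \emph{isomorphism} of dg BV-algebras between $(\Omega^\bullet(\mathcal M), d_{DR}, \wedge, \Delta, \langle\,,\,\rangle)$ and $(\Gamma(\mathcal M, \Lambda^\bullet T_{\mathcal M}), d_w, \wedge, D, \langle\,,\,\rangle_{SN})$ via contraction with the symplectic form $\omega$. An isomorphism of dg BV-algebras transports deformation retracts to deformation retracts and commutes with every ingredient entering the transfer formula of Theorem~\ref{thm:HTT} (labelings by structure maps, the homotopy $h$, the projection $p$ and the inclusion $i$). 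Consequently the two transferred skeletal homotopy BV-algebras on the respective cohomologies are isomorphic; the induced isomorphism on cohomology is exactly the Poincar\'e duality $\omega$-contraction.

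Finally, for part~(4), suppose $\mathcal M$ is compact and satisfies the hard Lefschetz condition. By the theorem of Mathieu recalled in Section~\ref{subsec:Trivialization}, this is equivalent to the $d\Delta$-condition on the de Rham dg BV-algebra. The $d\Delta$-condition implies the existence of non-commutative Hodge-to-de Rham degeneration data, and in particular a homotopy retract for which $\Delta i = 0$ and $h\Delta + \Delta h = 0$; thus $p(\Delta h)^{m-1}\Delta i = 0$ for every $m \geq 1$. Proposition~\ref{prop:NCHdRHyperCom} (or equivalently Theorem~\ref{thm:HomotopyFrob}) then guarantees that the transferred skeletal homotopy BV-algebra on the de Rham cohomology has all of its unary $\Delta^m$-operations vanishing, hence is a homotopy hypercommutative algebra. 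Transporting this through the isomorphism of part~(3) yields the same conclusion on the Poisson cohomology, and the isomorphism of part~(3) then restricts to an isomorphism of homotopy hypercommutative algebras.

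The only non-routine point in this program is verifying that the Koszul isomorphism $i_\omega$ of Proposition~\ref{prop:ISOdRPV} genuinely intertwines the choices of retraction data on the two sides, so as to identify the transferred structures on the nose rather than merely up to $\infty$-quasi-isomorphism; the cleanest way to achieve this is to transport the retraction data along $i_\omega$, thereby making the identification automatic by functoriality of the transfer formula.
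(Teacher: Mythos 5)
Your proposal is correct and follows essentially the same route as the paper, whose entire proof is the one-line citation ``direct corollary of Theorem~\ref{thm:HomotopyFrob} and Proposition~\ref{prop:ISOdRPV}''; you simply unpack that citation into its constituents (Theorem~\ref{thm:HTT}, Theorem~\ref{prop:HoEquivTransf}, Proposition~\ref{prop:NCHdRHyperCom}, and Mathieu's equivalence with the $d\Delta$-condition). Your closing remark about transporting the retraction data along $i_\omega$ to identify the two transferred structures on the nose is a sensible precision that the paper leaves implicit.
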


\begin{proof}
This is a direct corollary of Theorem~\ref{thm:HomotopyFrob} and Proposition~\ref{prop:ISOdRPV}.
\end{proof}

Let us now describe the linear case. Under the same notations as in the last example of Section~\ref{subsec:Trivialization}, when $V=\g^*$ is the linear dual of a finite dimensional Lie algebra, the transpose of the bracket produces a degree $-2$ element $w$ in $\g \otimes \Lambda^2 \g^*$  satisfying $\langle w, w\rangle=0$, by the Jacobi relation. In this case, the twisted differential $d_w$ is equal to the Chevalley-Eilenberg differential on $A\cong S(\g)\otimes \Lambda(\g^*)\subset C^\infty(\g^*)\otimes \Lambda (\g^*)$, which computes the cohomology of $\g$ with coefficients in $S(\g)$ and the adjoint action. If the Lie algebra $\g$ is unimodular, that is $\textrm{Tr}(\langle x  , - \rangle)=0$, for any $x\in\g$, then $\Delta(w)=0$ and the Chevalley-Eilenberg complex $(S(\g)\otimes \Lambda(\g^*), d_w, \bullet, \Delta,  \langle\,  , \rangle )$ 
is a dg BV-algebra. 

\begin{thm}
The Chevalley-Eilenberg cohomology $H_{CE}^\bullet(\g , S(\g))$ of a finite dimensional unimodular Lie algebra $\g$, with coefficients in $S(\g)$ with adjoint action, carries a skeletal homotopy BV-algebra, whose rectified dg BV-algebra is homotopy equivalent to the dg BV-algebra
$(S(\g)\otimes \Lambda(\g^*), d_w, \bullet, \Delta,  \langle\,  , \rangle )$. 
\end{thm}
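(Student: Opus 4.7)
The plan is to recognize this as a direct application of the homotopy transfer machinery developed in the previous section, once we have a dg BV-algebra structure on the Chevalley-Eilenberg complex. The hypothesis that $\g$ is unimodular is exactly what is needed for this: the element $w\in \g \otimes \Lambda^2 \g^*$ dual to the Lie bracket satisfies $\langle w,w\rangle=0$ by Jacobi, and unimodularity gives $\Delta(w)=0$. Hence the twisted differential $d_w$ is a derivation with respect to $\bullet$, $\Delta$ and $\langle\,,\rangle$ which squares to zero and anticommutes with $\Delta$, so that $(S(\g)\otimes \Lambda(\g^*), d_w, \bullet, \Delta, \langle\,,\rangle )$ is indeed a dg BV-algebra. (Under the identification $S(\g)\otimes \Lambda(\g^*)\cong \Gamma(\g^*, \Lambda^\bullet T_{\g^*})$ of polynomial polyvector fields on the affine space $\g^*$, this is a special case of the construction discussed just before the statement, with $w$ the linear Poisson bivector on $\g^*$.)

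Next, I would pick a homotopy retract
\begin{eqnarray*}
\xymatrix{     *{ \quad \ \  \quad (S(\g)\otimes \Lambda(\g^*), d_w)\ } \ar@(dl,ul)[]^{h}\ \ar@<0.5ex>[r]^-{p} & *{\ (H^\bullet_{CE}(\g, S(\g)),0)\ , \quad }  \ar@<0.5ex>[l]^-{i}}
\end{eqnarray*}
which exists since we are working over a field of characteristic $0$. Applying the Homotopy Transfer Theorem~\ref{thm:HTT} for skeletal homotopy BV-algebras to this retract yields a skeletal homotopy BV-algebra structure on $H^\bullet_{CE}(\g, S(\g))$ extending the transferred operations $p\widetilde{\mu}i^{\otimes n}$, where $\mu$ ranges over the generating $\Sy$-module $\overline{T}^c(\delta)\oplus \mathcal{S}^{-1}\overline{Grav}^*$.

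Finally, the homotopy equivalence statement is a direct consequence of the Rectification Theorem~\ref{prop:HoEquivTransf}: the rectified dg BV-algebra $\mathrm{Rec}(H^\bullet_{CE}(\g, S(\g)))=\Omega_\kappa \B_\iota P^*(H^\bullet_{CE}(\g, S(\g)))$ is connected to $(S(\g)\otimes \Lambda(\g^*), d_w, \bullet, \Delta, \langle\,,\rangle)$ by a zig-zag of quasi-isomorphisms of dg BV-algebras.

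Since there is no real obstacle once the BV-algebra structure is in place, the only point that deserves care is the verification that $d_w$ is a derivation of $\Delta$ and of the bracket; this is the place where unimodularity $\Delta(w)=0$ is used, via the identity $[\Delta,\langle w,-\rangle]=\langle \Delta(w),-\rangle=0$. Everything else is a formal consequence of Theorems~\ref{thm:HTT} and~\ref{prop:HoEquivTransf}. Note that, in contrast with Theorem~\ref{thm:HomotopyFrob}, we do \emph{not} claim that the transferred structure reduces to a homotopy hypercommutative algebra, since we have not imposed any non-commutative Hodge-to-de Rham degeneration condition on this BV-algebra.
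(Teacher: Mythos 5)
Your proposal is correct and follows exactly the route the paper intends: the paragraph preceding the theorem establishes that unimodularity gives $\Delta(w)=0$ and hence a dg BV-algebra structure on $(S(\g)\otimes \Lambda(\g^*), d_w)$, after which the paper leaves the statement as an immediate consequence of Theorem~\ref{thm:HTT} and Theorem~\ref{prop:HoEquivTransf}, just as you argue. Your closing remark that no homotopy hypercommutative reduction is claimed (absent a degeneration condition) is also consistent with the paper's treatment.
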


\begin{remark}
It would be now interesting to study the relationship with the  Duflo isomorphism, the analogue of the space of  differential forms, and the  symplectic and the hard Lefschetz condition, in this linear case. 
\end{remark}

\subsection{Application to Mirror Symmetry}

\begin{thm}\label{thm:Dol@deRham}
The Dolbeault cohomology of a Calabi-Yau manifold carries a homotopy hypercommutative algebra structure, which extends the hypercommutative algebra structure of \cite{BarannikovKontsevich98} and 
 whose rectified dg BV-algebra is homotopy equivalent to the Dolbeault complex 
 $(\Gamma(M, \wedge^\bullet \bar{T}_{\mathcal M}^* \otimes \wedge^\bullet {T}_{\mathcal M}), \bar \partial, \wedge,  \text{div}, \langle\,  , \rangle_S)$.
\end{thm}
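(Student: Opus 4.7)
The plan is to realize this as a direct corollary of Theorem~\ref{thm:HomotopyFrob}, after verifying that the Dolbeault complex of a Calabi-Yau manifold provides input data for that theorem.

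First, I would recall from Barannikov--Kontsevich~\cite{BarannikovKontsevich98} that for a Calabi-Yau manifold $\mathcal{M}$, the Dolbeault complex of anti-holomorphic forms with values in holomorphic polyvector fields
\[
\bigl(\Gamma(\mathcal{M}, \wedge^\bullet \bar{T}_{\mathcal M}^* \otimes \wedge^\bullet {T}_{\mathcal M}), \bar\partial, \wedge, \mathrm{div}, \langle\,,\,\rangle_S\bigr)
\]
is a dg BV-algebra: the wedge product is the obvious one, $\bar\partial$ is a derivation, $\mathrm{div}$ is the divergence operator (well-defined because the Calabi-Yau condition provides a nowhere vanishing holomorphic top form which trivializes the canonical bundle), and the Schouten bracket $\langle\,,\,\rangle_S$ is the obstruction to $\mathrm{div}$ being a derivation of $\wedge$. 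Moreover, the divergence $\mathrm{div}$ anti-commutes with $\bar\partial$ and squares to zero.

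Second, I would invoke the Tian--Todorov lemma in the form used in~\cite{BarannikovKontsevich98}: the mixed complex $(\Gamma(\mathcal{M}, \wedge^\bullet \bar{T}^*\otimes \wedge^\bullet T), \bar\partial, \mathrm{div})$ satisfies the $\bar\partial\mathrm{div}$-lemma. By the proposition established in Section~\ref{subsec:Trivialization}, the $d\Delta$-condition implies the existence of non-commutative Hodge-to-de Rham degeneration data. (Concretely, one writes $H^\bullet_{\bar\partial}(\mathcal{M}, \wedge^\bullet T_{\mathcal M})$ as a deformation retract of the Dolbeault complex by choosing representatives lying in $\ker \mathrm{div}$, so that $\mathrm{div}\circ i = 0$, which automatically kills $p(\mathrm{div}\, h)^{m-1}\mathrm{div}\, i$ for every $m\geq 1$.)

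With these inputs in place, the conclusion of Theorem~\ref{thm:HomotopyFrob} applies verbatim: it endows the Dolbeault cohomology $H^\bullet_{\bar\partial}(\mathcal{M}, \wedge^\bullet T_{\mathcal M})$ with a homotopy hypercommutative algebra structure whose first stratum of operations (the honest hypercommutative structure) recovers, via the tree formulae of Losev--Shadrin discussed in the proof of Theorem~\ref{thm:HomotopyFrob}, the hypercommutative structure of Barannikov--Kontsevich. Theorem~\ref{prop:HoEquivTransf} then yields the homotopy equivalence between the rectified dg BV-algebra $\mathrm{Rec}(H^\bullet_{\bar\partial}(\mathcal{M}, \wedge^\bullet T_{\mathcal M}))$ and the original Dolbeault dg BV-algebra. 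The main point requiring care is not any computation but the verification that the trivialization data used in~\cite{BarannikovKontsevich98} to produce the Frobenius manifold can be identified with the specific tree-sum formulae of Theorem~\ref{thm:HTT}; this was already carried out inside the proof of Theorem~\ref{thm:HomotopyFrob}, so here it is enough to cite that identification.
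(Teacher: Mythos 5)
Your proposal is correct and follows exactly the route the paper intends: the Calabi--Yau Dolbeault complex is listed among the examples of dg BV-algebras satisfying the $d\Delta$-condition in Section~\ref{subsec:Trivialization}, the implication chain ($d\Delta$-condition $\Rightarrow$ semi-classical $\Rightarrow$ NC Hodge-to-de Rham data) is the paper's own proposition, and the statement then follows verbatim from Theorem~\ref{thm:HomotopyFrob} and Theorem~\ref{prop:HoEquivTransf}. The paper leaves this as an immediate corollary without a written proof, and your filled-in argument, including the identification of the first stratum with the Barannikov--Kontsevich hypercommutative structure via the Losev--Shadrin tree formulae, matches the intended one.
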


The moduli space $\mathscr{M}$ of Maurer-Cartan elements associated to the Dolbeault complex is an extension of the moduli space $\mathscr{M}^{\text{classical}}$ associated to the Kodaira-Spencer dg Lie subalgebra, which encodes deformations of complex structures. The notion of \emph{generalized complex geometry} was introduced by N. Hitchin in \cite{Hitchin03} and then developed by his students M. Gualtieri \cite{Gualtieri04} and G.R. Cavalcanti \cite{Cavalcanti05} as a framework  which encompasses both complex and symplectic geometries. In this sense, the moduli space 
 $\mathscr{M}$ was shown by Gualtieri to correspond to deformations of generalized complex structures. Several versions of the $d\Delta$-condition were shown to hold in this setting, see \cite{Gualtieri07, Cavalcanti07}. Finally the dg BV algebra structure of \cite{Li05} allows us to apply the same argument which produces a version of 
 Theorem~\ref{thm:Dol@deRham} in the context of generalized complex geometry.
 
 S. Barannikov generalized in \cite{Barannikov02} the notions of periods and  variations of Hodge structure from $\mathscr{M}^{\text{classical}}$ to $\mathscr{M}$. He showed, for instance, that the image of these generalized periods on $H^\bullet(\mathcal M, \mathbb C)$ coincide with the Gromov-Witten invariants. This is based on the fact that the Dolbeault cohomology admits not one but a family of Frobenius manifold structures. 
This remark coincides with the present approach: there are many choices in the Homotopy Transfer theorem. Moreover, the various transferred structures are related by the group of $\infty$-isomorphisms, see \cite[Theorem~$10.3.15$]{LodayVallette10}. In the case of homotopy BV-algebras, this group should be related to the Givental group  \cite{Givental01, Givental01bis}. 

The Mirror Symmetry conjecture \cite{Kontsevich95} claims that the Fukaya $A_\infty$-category of Lagrangian submanifolds of a Calabi-Yau manifold $\mathcal M$ (A-side) should be equivalent to the bounded derived category of coherent sheaves on a dual Calabi-Yau manifold $\widetilde{\mathcal M}$ (B-side). The tangent space of the  moduli space of $A_\infty$-deformations of the Fukaya category is conjectured to be given by the de Rham cohomology $H^\bullet_{DR}(\mathcal M, \mathbb C)$ of $X$.
By the Kontsevich formality \cite{Kontsevich03}, the $A_\infty$-deformations of the latter category are encoded by the Dolbeault complex. So the de Rham cohomology  equipped with the Gromov-Witten invariants should be isomorphic to the Dolbeault cohomology  $H^\bullet(\widetilde{\mathcal{M}}, \Lambda^\bullet T_{\widetilde{\mathcal{M}}})$ as Frobenius manifolds. The following conjecture of Cao-Zhou \cite{CaoZhou01}, similar to Proposition~\ref{prop:ISOdRPV},  gives a way to study this question: 
there is a quasi-isomorphism of dg BV-algebras 
$$(\Omega^{n-\bullet}(\mathcal M), d_{DR},  \wedge, \Delta, \langle\,  ,  \rangle) \xrightarrow{\sim}  
(\Gamma(M, \wedge^\bullet \bar{T}_{\widetilde{\mathcal{M}}}^* \otimes \wedge^\bullet {T}_{\widetilde{\mathcal{M}}}),  \bar \partial, \wedge, \text{div}, \langle\,  , \rangle_S) \ . $$
The results of the present paper show that it is actually enough to  prove the existence of an $\infty$-quasi-isomorphism of dg BV-algebras to get the aforementioned isomorphism on the cohomology level and to relate the two associated deformation functors.


\begin{center}
\textsc{Acknowledgements}
\end{center}
We are grateful to Damien Calaque, Cl\'ement Dupont, Vladimir Dotsenko, Jean-Louis Loday and Jim Stasheff for their useful comments on the first version of this paper.

B.V. would like to express his deep gratitude to the Max-Planck Institute f\"ur Mathematik in Bonn for the long term invitation and for the excellent working conditions. Both authors are happy to acknowledge the generous support provided by the Simons Center for Geometry And Physics. 

G. C. D.-C. was supported by the National Science Foundation under Award No. DMS-1004625 and B.V. was supported by the ANR grant JCJC06 OBTH.



\bibliographystyle{amsalpha}
\bibliography{bib}

\end{document}